\newcommand{\BF}{\textbf}
\newcommand{\ORC}{\textup{ORC}}
\colorlet{MyRed}{Crimson!60!DarkRed}
\colorlet{MyBlue}{DodgerBlue!75!black}
\colorlet{MyGreen}{DarkGreen}
\colorlet{MyViolet}{DarkMagenta}
\colorlet{MyLightBlue}{DodgerBlue!20}
\colorlet{MyLightGreen}{MyGreen!20}
\colorlet{PrimalColor}{MyBlue}
\colorlet{PrimalFill}{MyLightBlue}
\colorlet{DualColor}{MyRed}
\colorlet{AlertColor}{MyRed}	% for alerts
\colorlet{BadColor}{MyRed}	% for good
\colorlet{GoodColor}{MyGreen}	% for good
\colorlet{LinkColor}{MediumBlue}	% for hyperlinks
\colorlet{MacroColor}{MyViolet}
\colorlet{RevColor}{MediumBlue}	% for revisions
\setlist[1]{topsep=\smallskipamount,itemsep=\smallskipamount,left=\parindent}
\setlist[2]{left=0pt}
\crefname{algo}{Algorithm}{Algorithms}
\crefname{assumption}{Assumption}{Assumptions}
\crefname{case}{Case}{Cases}
\crefname{cond}{Condition}{Conditions}
\crefname{noref}{}{}
\crefname{problem}{Problem}{Problems}
\DeclareRobustCommand{\crefnosort}[1]{%
  \begingroup\@cref@sortfalse\cref{#1}\endgroup
}
\theoremstyle{plain}
\newtheorem{theorem}{Theorem}	% for theorems
\newtheorem{corollary}{Corollary}	% for corollaries
\newtheorem{lemma}{Lemma}	% for lemmas
\newtheorem{proposition}{Proposition}	% for propositions
\newtheorem*{theorem*}{Theorem}	% for theorems
\newtheorem*{corollary*}{Corollary}	% for corollaries (unnumbered)
\theoremstyle{definition}
\newtheorem{assumption}{Assumption}	% for assumptions
\newtheorem{definition}{Definition}	% for definitions
\newtheorem{example}{\raisebox{\depth}{$\blacktriangleright$}~Example}	% for examples
\newtheorem*{assumption*}{Assumptions}	% for assumptions (unnumbered)
\newtheorem*{definition*}{Definition}	% for definitions (unnumbered)
\newtheorem*{example*}{\raisebox{\depth}{$\blacktriangleright$}~Example}	% for examples (unnumbered)
\theoremstyle{remark}
\newtheorem{remark}{Remark}	% for remarks
\newtheorem*{remark*}{Remark}	% for remarks (unnumbered)
\newtheorem*{notation*}{Notation}	% for notation (unnumbered)
\newcounter{proofstep}
\numberwithin{remark}{section}	% for remark numbering
\numberwithin{example}{section}	% for example numbering
\newcommand{\draft}[1]{#1}	% for removing macro coloring
\newcommand{\newmacro}[2]{\newcommand{#1}{\draft{#2}}}	% for semantic definitions
\newcommand{\newop}[2]{\DeclareMathOperator{#1}{\draft{#2}}}	% for semantic definitions
\newcommand{\newoplims}[2]{\DeclareMathOperator*{#1}{\draft{#2}}}	% for semantic definitions
\newcommand{\newsmartmacro}[2]{
	\NewDocumentCommand{#1}{
		E{_}{{}}
	}{
		\draft{#2_{##1}}
	}
}
\newcommand{\eps}{\varepsilon}	% for better epsilon
\newcommand{\wilde}{\widetilde}	% for wide tildes
\DeclarePairedDelimiterX{\setdef}[2]{\{}{\}}{#1:#2}	% for set builder notation
\DeclarePairedDelimiterXPP{\exclude}[1]{\mathopen{}\setminus}{\{}{\}}{}{#1}
\DeclarePairedDelimiterX{\braket}[2]{\langle}{\rangle}{#1,#2}	% for brakets
\DeclarePairedDelimiterX{\internalInner}[1]{\langle}{\rangle}{#1}
\NewDocumentCommand \inner {s m g}{
	\IfBooleanTF{#1}
	{
	\internalInner*{#2\IfValueT{#3}{,#3}}
	}
	{
	\internalInner{#2\IfValueT{#3}{,#3}}
	}
}
\DeclarePairedDelimiterXPP{\dnorm}[1]{}{\lVert}{\rVert}{_{\ast}}{#1}	% for dual norm
\DeclarePairedDelimiterXPP{\onenorm}[1]{}{\lVert}{\rVert}{_{1}}{#1}	% for L1 norm
\DeclarePairedDelimiterXPP{\twonorm}[1]{}{\lVert}{\rVert}{_{2}}{#1}	% for L2 norm
\DeclarePairedDelimiterXPP{\supnorm}[1]{}{\lVert}{\rVert}{_{\infty}}{#1}	% for sup norm
\newmacro{\nat}{i}	% for generic natural
\newmacro{\natA}{i}	% for first natural
\newmacro{\natB}{j}	% for second natural
\newmacro{\natC}{k}	% for third natural
\newmacro{\nats}{\mathbb{N}}	% for set of naturals
\newmacro{\N}{\nats}	% alias
\newmacro{\integer}{a}	% for generic integer
\newmacro{\intA}{a}	% for first integer
\newmacro{\intB}{b}	% for second integer
\newmacro{\intC}{c}	% for third integer
\newmacro{\integers}{\mathbb{Z}}	% for set of integers
\newmacro{\Z}{\integers}	% alias
\newmacro{\rational}{r}	% for generic rational
\newmacro{\ratA}{r}	% for first rational
\newmacro{\ratB}{s}	% for second rational
\newmacro{\ratC}{t}	% for third rational
\newmacro{\rationals}{\mathbb{Q}}	% for set of rationals
\newmacro{\Q}{\mathcal Q}	% alias
\newmacro{\real}{x}	% for generic real
\newmacro{\realA}{x}	% for first real
\newmacro{\realB}{y}	% for second real
\newmacro{\realC}{z}	% for third real
\newmacro{\reals}{\mathbb{R}}	% for set of reals
\newmacro{\R}{\reals}	% alias
\newmacro{\complex}{z}	% for generic complex
\newmacro{\complexA}{z}	% for first complex
\newmacro{\complexB}{w}	% for second complex
\newmacro{\complexC}{z}	% for third complex
\newmacro{\complexes}{\mathbb{C}}	% for set of complex numbers
\newmacro{\C}{\mathbb{C}}	% alias (may clash)
\newoplims{\argmax}{argmax}	% for argmax
\newoplims{\argmin}{argmin}	% for argmin
\newoplims{\intersect}{\bigcap}	% for intersections
\newoplims{\union}{\bigcup}	% for unions
\newop{\aff}{aff}	% for affine hull
\newop{\bd}{bd}	% for boundary
\newop{\bigoh}{\mathcal{O}}	% for Landau O
\newop{\card}{card}	% for cardinality
\newop{\cl}{cl}	% for closure
\newop{\conv}{conv}	% for convex hull (but see also \simplex)
\newop{\clconv}{\overline{conv}}	% for convex hull (but see also \simplex)
\newop{\crit}{crit}	% for critical
\newop{\diag}{diag}	% for diagonal matrices
\newop{\diam}{diam}	% for diameter
\newop{\dist}{dist}	% for distance
\newop{\dom}{dom}	% for domain
\newop{\gra}{\textup{gra}\:}%graph of operator
\newop{\zer}{\textup{zer}}% zero of operator
\newop{\Fix}{\textup{Fix}}% for fixed point
\newop{\eig}{eig}	% for eigenvalues
\newop{\ess}{ess}	% for essential
\newop{\Hess}{Hess}	% for Hessian
\newop{\ind}{ind}	% for index
\newop{\im}{im}	% for image
\newop{\intr}{int}	% for interior
\newop{\Jac}{Jac}	% for Jacobian
\newop{\one}{\mathds{1}}	% for indicator
\newop{\proj}{proj}	% for projection
\newop{\prox}{prox}	% for prox
\newop{\rank}{rank}	% for rank
\newop{\relint}{ri}	% for relative interior
\newop{\sign}{sgn}	% for sign
\newop{\supp}{supp}	% for support
\newop{\Sym}{Sym}	% for symmetric
\newop{\tr}{tr}	% for trace
\newop{\unif}{unif}	% for uniform
\newop{\vol}{vol}	% for volume
\newcommand{\inter}{\textup{int}}
\newcommand{\id}{\mathrm{id}}
\newcommand{\Id}{\mathrm{Id}}
\newcommand{\Txb}{{\bT}_{\mm^p_{\bar x},\mm^d_{\bar y}}}	
\newcommand{\Tzb}{{\bT}_{\mm_{\bar Z}}}	
\newcommand{\cf}{cf.\xspace}	% for consistency
\newcommand{\eg}{e.g.,\xspace}	% for consistency
\newcommand{\ie}{i.e.,\xspace}	% for consistency
\newcommand{\alt}[1]{#1'}	% for variant version
\newcommand{\altalt}[1]{#1''}	% for second variant
\newmacro{\ball}{\mathbb{B}}	% for ball
\newmacro{\clball}{\overline{\mathbb{B}}}	% for ball
\newmacro{\sphere}{\mathbb{S}}	% for sphere
\newmacro{\argdot}{\kern.5pt\boldsymbol{\cdot}\kern.5pt}	% for argument
\newmacro{\ddt}{\frac{d}{dt}}	% for Leibniz
\newmacro{\del}{\partial}	% for derivatives
\newmacro{\const}{c}	% for generic constant
\newmacro{\constA}{a}	% for generic constant
\newmacro{\constB}{b}	% for generic constant
\newmacro{\Const}{C}	% for generic constant
\newmacro{\param}{\theta}	% for parameter
\newmacro{\params}{\Theta}	% for set of parameters
\newmacro{\coef}{\alpha}	% for generic coefficient
\newmacro{\coefA}{\lambda}	% for first coefficient
\newmacro{\coefB}{\mu}	% for second coefficient
\newmacro{\coefC}{\nu}	% for third coefficient
\newmacro{\expA}{p}	% for first exponent
\newmacro{\expB}{q}	% for second exponent
\newmacro{\expC}{r}	% for third exponent
\newmacro{\precs}{\eps}		% for precision
\newmacro{\precsalt}{\delta}	% for second precision
\newmacro{\asympteq}{\asymp}
\newmacro{\func}{g} % for generic function
\newmacro{\realspace}{\R^{\vdim}}	% for vector space
\newmacro{\vecspace}{\mathcal{X}}	% for vector space
\newmacro{\dspace}{\R^{\vdim}}	% for dual space
\newmacro{\subspace}{\mathcal{W}}	% for subspace
\newmacro{\coord}{i}	% for generic coordinate
\newmacro{\coordA}{i}	% for first coordinate
\newmacro{\coordB}{j}	% for second coordinate
\newmacro{\coordC}{k}	% for third coordinate
\newmacro{\nCoords}{d}	% for number of coordinates
\newmacro{\dims}{\nCoords}	% for dimension
\newmacro{\vdim}{\nCoords}	% alias
\newmacro{\bvec}{e}	% for basis vector
\newmacro{\uvec}{u}	% for unit vector
\newmacro{\bvecs}{\mathcal{E}}	% for basis vectors
\newmacro{\point}{x}	% for generic point
\newmacro{\pointA}{\point}	% for first point
\newmacro{\pointB}{\point'}	% for second point
\newmacro{\pointalt}{\pointB}
\newmacro{\pointC}{\point''}	% for third point
\newmacro{\pointaltalt}{\pointC}
\newmacro{\points}{\mathcal{X}}	% for set of points
\newmacro{\intpoints}{\relint\points}	%for point set interior
\newmacro{\base}{p}	% for base point
\newmacro{\baseA}{q}	% for first base
\newmacro{\baseB}{q}	% for second base
\newmacro{\baseC}{u}	% for third base
\newmacro{\set}{\mathcal{K}}	% for generic set
\newmacro{\setA}{\set}	% for first set
\newmacro{\setB}{\alt\set}	% for second set
\newmacro{\setC}{\altalt\set}	% for third set
\newmacro{\idx}{i}
\newmacro{\idxalt}{j}
\newmacro{\idxaltalt}{l}
\newmacro{\idxaltaltalt}{k}
\newmacro{\indices}{I}
\newmacro{\indicesalt}{J}
\newmacro{\closed}{\mathcal{C}}	% for closed sets
\newmacro{\cpt}{\mathcal{K}}	% for compact sets
\newmacro{\cptalt}{\alt\cpt}	% for compact sets
\newmacro{\nhd}{\mathcal{U}}	% for neighborhoods
\newmacro{\nhdalt}{\W}	% for neighborhoods
\newmacro{\nhdaltalt}{\V}	% for neighborhoods
\newmacro{\nbd}{\nhd}
\newmacro{\nbdalt}{\nhdalt}
\newmacro{\nbdaltalt}{\nhdaltalt}
\newmacro{\U}{\mathcal{U}}	% for neighborhoods
\newmacro{\V}{\mathcal{V}}	% for neighborhoods
\newmacro{\W}{\mathcal{W}}	% for neighborhoods
\newmacro{\open}{\mathcal{U}}	% for open sets
\newmacro{\openA}{\mathcal{U}}	% for open sets
\newmacro{\openB}{\mathcal{V}}	% for open sets
\newmacro{\mfld}{\mathcal{M}}	% for manifold
\newmacro{\gmat}{g}	% for metric tensor
\newmacro{\gdist}{\dist_{\gmat}}	% for metric distance
\newmacro{\tvec}{z}	% for tangent vector
\newmacro{\form}{\omega}	% for generic form
\newmacro{\radius}{r}
\newmacro{\Radius}{R}
\newmacro{\Radiusalt}{\widetilde{\Radius}}
\newmacro{\radiusalt}{\alt\radius}
\newmacro{\margin}{\delta}
\newmacro{\marginalt}{\alt\margin}
\newmacro{\Margin}{\Delta}
\newmacro{\connectedcomp}{\mathcal{K}}
\newmacro{\plainset}{S} % for plain set
\newmacro{\interv}{A} % for plain set
\newmacro{\domain}{D}
\newmacro{\bigcpt}{\mathcal{D}}
\newsmartmacro{\bigcptalt}{\alt\bigcpt}
\newsmartmacro{\bigcptaltalt}{\alt\alt\bigcpt}
\newmacro{\cvx}{\mathcal{C}}	% for generic convex set
\newmacro{\subd}{\partial}	% for subdifferential
\newop{\tspace}{T}	% for tangent space
\newop{\tcone}{TC}	% for tangent cone
\newop{\dcone}{\tcone^{\ast}}	% for dual cone
\newop{\ncone}{NC}	% for normal cone
\newop{\pcone}{PC}	% for polar cone
\newop{\hull}{\Delta}	% for simplices
\newop{\minimize}{minimize}	% for minimization
\newop{\Opt}{Opt}	% for value of problem
\newop{\Sol}{Sol}	% for solution of problem
\newop{\gap}{Gap}	% for gap function
\newop{\orcl}{\mathsf{G}}	% for oracle
\newop{\err}{\mathsf{Z}}	% for oracle noise
\newmacro{\obj}{f}	% for objective function
\newmacro{\objalt}{g}	% for second objective function
\newmacro{\objA}{f}	% for first objective
\newmacro{\objB}{g}	% for second objective
\newmacro{\sobj}{F}	% for stochastic objective
\newcommand{\sol}[1][\point]{#1^{\ast}}	% for solution point (\point by default)
\newmacro{\gvec}{g}	% for gradient vector
\newmacro{\gbound}{G}	% for gradient bound
\newmacro{\oper}{A}	% for operator
\newmacro{\vecfield}{v}	% for vector field (selection etc.)
\newmacro{\vbound}{V}	% for vector bound
\newmacro{\lips}{L}	% for Lipschitz modulus
\newmacro{\strong}{\alpha}	% for strong convexity modulus
\newmacro{\smooth}{\beta}	% for strong smoothness modulus
\newmacro{\tmplips}{L}	% for Lipschitz modulus
\newmacro{\tmpbound}{B}	% for Lipschitz modulus
\newmacro{\growth}{M}	% for growth modulus
\newmacro{\regparam}{\lambda}	% for regularization parameter
\newop{\ex}{\mathbb{E}}	% for expectations
\newop{\prob}{\mathbb{P}}	% for probability
\newop{\probalt}{\mathbb{Q}}	% for probability
\newop{\var}{\mathbb{V}}	% for variance
\newop{\cov}{cov}	% for covariance
\newop{\simplex}{\hull}	% for simplices
\DeclarePairedDelimiterXPP{\exof}[1]{\ex}{[}{]}{}{%	% for conditional expectations
 #1}
\DeclarePairedDelimiterXPP{\exwrt}[2]{\ex_{#1}}{[}{]}{}{%	% for conditional expectations
 #2}
\DeclarePairedDelimiterXPP{\probof}[1]{\prob}{(}{)}{}{%	% for conditional probabilities
 #1}
\DeclarePairedDelimiterXPP{\probwrt}[2]{\prob_{\!#1}}{(}{)}{}{%	% for conditional probabilities
 #2}
\DeclarePairedDelimiterXPP{\oneof}[1]{\one}{\{}{\}}{}{#1}	% for indicators
\DeclarePairedDelimiterXPP{\varof}[1]{\var}{[}{]}{}{%	% for covariance
 #1}
\DeclarePairedDelimiterXPP{\covof}[1]{\cov}{(}{)}{}{%	% for covariance
 #1}
\newmacro{\event}{E}	% for generic event
\newmacro{\eventA}{E}	% for first event
\newmacro{\eventB}{H}	% for second event
\newmacro{\seed}{\theta}	% for seed
\newmacro{\seeds}{\Theta}	% for seed space
\newmacro{\pdist}{P}	% for seed law
\newmacro{\history}{\mathcal{H}}	% for filtrations
\newmacro{\sample}{\omega}	% for sample
\newmacro{\samples}{\Omega}	% for set of samples
\newmacro{\sspace}{\R^{m}}	% for set of samples
\newmacro{\filter}{\mathcal{F}}	% for filtration
\newmacro{\probspace}{(\samples,\filter,\prob)}	% for probability space
\newmacro{\mean}{\mu}	% for mean of distribution
\newmacro{\sdev}{\sigma}	% for mean of distribution
\newmacro{\variance}{\sdev^{2}}	% for mean of distribution
\newmacro{\variancealt}{s^2}
\newmacro{\covmat}{\Sigma}
\newmacro{\hessmat}{H}
\newmacro{\rv}{X}
\newmacro{\trv}{X_\Radius}
\newmacro{\gaussian}{\mathcal{N}}
\newmacro{\partition}{Z}
\newmacro{\nSamples}{n}	% for number of samples
\newmacro{\datapoint}{\xi}
\newmacro{\beforestart}{-1}	% for before start index
\newmacro{\start}{0}	% for start index
\newmacro{\afterstart}{1}	% for second index
\newmacro{\running}{\start,\afterstart,\dotsc}	% for running
\newmacro{\run}{n}	% for generic index
\newmacro{\runA}{n}	% for first index
\newmacro{\runB}{k}	% for second index
\newmacro{\runC}{\ell}	% for third index
\newmacro{\nRuns}{N}	% for index size
\newmacro{\nRunsalt}{\alt\nRuns}	% for index size
\newmacro{\runs}{\mathcal{\nRuns}}	% for index set
\newmacro{\runalt}{\runB}	% alias
\newmacro{\tstart}{0}	% for time start
\newmacro{\timeA}{t}	% for first time
\newmacro{\timeB}{s}	% for second time
\newmacro{\timealt}{\timeB}	% for second time
\newmacro{\timealtalt}{u}
\newmacro{\timeC}{\tau}	% for third time
\newmacro{\timeD}{\lambda}	% for third time
\newmacro{\horizon}{T}	% for horizon
\newmacro{\horizonalt}{S}	% for horizon
\newmacro{\seq}{a}	% for generic sequence
\newmacro{\seqA}{a}	% for first sequence
\newmacro{\seqB}{b}	% for second sequence
\newmacro{\seqC}{c}	% for third sequence
\newmacro{\state}{x}	% for generic state variable
\newsmartmacro{\accstate}{x^{\step}}	% for generic state variable
\newmacro{\stateA}{x}	% for first state variable
\newmacro{\stateB}{z}	% for second state variable
\newmacro{\statealt}{\stateB}
\newmacro{\stateC}{y}	% for third state variable
\newmacro{\stateD}{p}	% for third state variable
\newmacro{\statealtalt}{\stateC}
\newmacro{\statealtaltalt}{\stateD}
\newmacro{\cstate}{X}
\newmacro{\cstateA}{X}
\newmacro{\cstateB}{Z}
\newmacro{\cstatealt}{\cstateB}
\newmacro{\startingpoint}{\point_\start}	% for generic state variable
\newcommand{\curr}[1][\state]{\draft{#1_{\run}}}	% for current iterate (X by default)
\newmacro{\mat}{M}	% for generic matrix
\newmacro{\hmat}{H}	% for Hessian matrix
\newmacro{\ones}{\mathbf{1}}	% for matrix of ones
\newmacro{\eye}{I}	% for identity matrix
\newmacro{\identity}{\eye}	% for identity matrix
\newmacro{\eigval}{\lambda}	% for eigenvalue
\newop{\Nash}{NE}	% for Nash equilibria
\newop{\CE}{CE}	% for correlated equilibria
\newop{\CCE}{CCE}	% for Hannan set
\newop{\NI}{NI}	% for Nikaido-Isoda function
\newop{\brep}{br}	% for best responses
\newop{\reg}{Reg}	% for regret
\newop{\preg}{\overline{Reg}}	% for pseudo-regret
\newop{\val}{val}	% for value function
\newmacro{\play}{i}	% for generic player
\newmacro{\playA}{i}	% for first player
\newmacro{\playB}{j}	% for second player
\newmacro{\playC}{k}	% for third player
\newmacro{\nPlayers}{N}	% for number of players
\newmacro{\players}{\mathcal{\nPlayers}}	% for set of players
\newmacro{\pure}{\alpha}	% for generic pure
\newmacro{\pureA}{\alpha}	% for first pure
\newmacro{\pureB}{\beta}	% for second pure
\newmacro{\pureC}{\gamma}	% for third pure
\newmacro{\nPures}{A}	% for number of pures
\newmacro{\pures}{\mathcal{\nPures}}	% for set of pures
\newmacro{\strat}{x}	% for generic strategy
\newmacro{\stratA}{x}	% for first strategy
\newmacro{\stratB}{\stratA'}	% for second strategy
\newmacro{\stratC}{\stratA''}	% for third strategy
\newmacro{\strats}{\mathcal{X}}	% for set of strategies
\newmacro{\intstrats}{\strats^{\circle}}	% for set of interior strategies
\newmacro{\pay}{u}	% for payoff function
\newmacro{\payv}{v}	% for payoff vector
\newmacro{\payfield}{v}	% for payoff field
\newmacro{\loss}{\ell}	% for loss function
\newmacro{\game}{\mathcal{G}}	% for game
\newmacro{\gameFull}{\game(\players,\points,\pay)}	% for game, full description
\newmacro{\fingame}{\Gamma}	% for finite game
\newmacro{\fingameFull}{\Gamma(\players,\pures,\pay)}	% for finite game, full description
\newmacro{\minmax}{L}	% for minmax objective
\newmacro{\minvar}{\point_{1}}	% for generic min variable
\newmacro{\minvarA}{\point_{1}}	% for first min variable
\newmacro{\minvarB}{\minvarA'}	% for second min variable
\newmacro{\minvars}{\points_{1}}	% for set of minvars
\newmacro{\maxvar}{\point_{2}}	% for generic max variable
\newmacro{\maxvarA}{\point_{2}}	% for first max variable
\newmacro{\maxvarB}{\maxvarA'}	% for second max variable
\newmacro{\maxvars}{\points_{2}}	% for set of maxvars
\newmacro{\pot}{U}	% for potential function
\newmacro{\gradientbound}{16 \dims \log 6}	% for the coercuvity bound
\newmacro{\hreg}{h}	% for regularizer
\newmacro{\breg}{D}	% for Bregman divergence
\newmacro{\mprox}{P}	% for mirror prox-mapping
\newmacro{\mirror}{Q}	% for mirror map
\newmacro{\fench}{F}	% for Fenchel coupling
\newmacro{\hstr}{K}	% for strong convexity constant
\newmacro{\hrange}{H}	% for regularizer depth
\newmacro{\proxdom}{\points^{\hreg}}	% for prox-domain
\DeclarePairedDelimiterXPP{\bregof}[2]{\breg}{(}{)}{}{#1,#2}	% for Bregman prox step
\DeclarePairedDelimiterXPP{\proxof}[2]{\mprox_{#1}}{(}{)}{}{#2}	% for Bregman prox step
\newmacro{\dpoint}{y}	% for generic dual point
\newmacro{\dpointA}{y}	% for first dual point
\newmacro{\dpointB}{\dpointA'}	% for second dual point
\newmacro{\dpointC}{\dpointA''}	% for third dual point
\newmacro{\dpoints}{\mathcal{Y}}	% for set of dual points
\newmacro{\dstate}{Y}	% for dual state
\newmacro{\dvec}{w}	% for dual vector
\newmacro{\zone}{\mathbb{D}}	% for Bregman zone
\newop{\Eucl}{\Pi}	% for Euclidean projection
\newop{\logit}{softmax}	% for logit map
\newop{\dkl}{KL}	% for Kullback Leibler
\newmacro{\flowmap}{\Theta}	% for (semi)flow map
\DeclarePairedDelimiterXPP{\flowof}[2]{\flowmap_{#1}}{(}{)}{}{#2}	% for flow
\DeclarePairedDelimiterXPP{\dotflowof}[2]{\dot\flowmap_{#1}}{(}{)}{}{#2}	% for flow
\newmacro{\traj}{x}	% for generic trajectory
\DeclarePairedDelimiterXPP{\trajof}[1]{\traj}{(}{)}{}{#1}	% for trajectory
\DeclarePairedDelimiterXPP{\difftrajof}[1]{\dot\traj}{(}{)}{}{#1}	% for trajectory
\newcommand{\est}[1]{\hat #1}	% for estimates
\newmacro{\signal}{\est\gvec}	% for signal
\newmacro{\step}{\eta}	% for step-size
\newmacro{\learn}{\eta}	% for learning rate
\newmacro{\tempinv}{\beta}	% for inverse temperature
\newmacro{\batch}{B}
\newmacro{\batchidx}{b}
\newmacro{\efftime}{\tau}	% for proper time
\newmacro{\error}{Z}	% for error
\newmacro{\noise}{\mathsf{U}}	% for noise
\newmacro{\snoise}{\xi}	% for scalar noise
\newmacro{\noisepar}{\sdev}	% for noise parameter
\newmacro{\noisevar}{\variance}	% for noise variance
\newmacro{\aggnoise}{\mathrm{\uppercase\expandafter{\romannumeral1}}}	% for aggregate
\newmacro{\supnoise}{\aggnoise_{\infty}}	% for supremum
\newmacro{\maxnoise}{\aggnoise^{\ast}}	% for maximal
\newmacro{\bias}{b}	% for bias
\newmacro{\drift}{b}	% for bias
\newmacro{\bbound}{B}	% for bias bound
\newmacro{\sbias}{\chi}	% for scalar bias
\newmacro{\aggbias}{\mathrm{\uppercase\expandafter{\romannumeral2}}}	% for aggregate
\newmacro{\supbias}{\aggbias_{\infty}}	% for supremum
\newmacro{\maxbias}{\aggbias^{\ast}}	% for maximal
\newmacro{\sbound}{M}	% for second moment bound
\newmacro{\aggsecond}{\mathrm{\uppercase\expandafter{\romannumeral3}}}	% for aggregate
\newmacro{\supsecond}{\aggsecond_{\infty}}	% for supremum
\newmacro{\maxsecond}{\aggsecond^{\ast}}	% for maximal
\newmacro{\mix}{\delta}	% for query radius
\newmacro{\unitvec}{w}	% for query direction
\newmacro{\unitvar}{W}	% for query direction
\newmacro{\perturb}{z}	% for perturbation
\newmacro{\purequery}{\est\pure}	% for pure strategy query
\newmacro{\query}{\est\state}	% for query state
\newmacro{\pivot}{\point}	% for pivot point
\newmacro{\querypoint}{\est\point}	% for query point
\newmacro{\vertex}{v}	% for generic vertex
\newmacro{\vertexA}{v}	% for first vertex
\newmacro{\vertexB}{w}	% for second vertex
\newmacro{\vertexC}{u}	% for third vertex
\newmacro{\nVertices}{V}	% for number of vertices
\newmacro{\vertices}{\mathcal{V}}	% for set of vertices
\newmacro{\edge}{e}	% for generic edge
\newmacro{\edgeA}{e}	% for first edge
\newmacro{\edgeB}{\edgeA'}	% for second edge
\newmacro{\edgeC}{\edgeA''}	% for third edge
\newmacro{\nEdges}{E}	% for number of edges
\newmacro{\edges}{\mathcal{\nEdges}}	% for set of edges
\newmacro{\graph}{\mathcal{G}}	% for graph
\newmacro{\graphFull}{\graph(\vertices,\edges)}	% for graph, full description
\newmacro{\adjmat}{A}	% for adjacency matrix
\newmacro{\wmat}{W}	% for weighted adjacency matrix
\newmacro{\tree}{T}
\newmacro{\treealt}{\alt\tree}
\newmacro{\trees}{\mathcal{T}}
\newmacro{\treesalt}{\wilde\trees}
\newmacro{\mgf}{M}	% for moment-generating function
\DeclarePairedDelimiterXPP{\mgfof}[2]{\mgf_{#1}}{(}{)}{}{#2}	% for moment-generating function
\newmacro{\cgf}{K}	% for cumulant-generating function
\DeclarePairedDelimiterXPP{\cgfof}[2]{\cgf_{#1}}{(}{)}{}{#2}	% for cumulant-generating function
\newmacro{\ham}{\mathcal{H}}	% for Hamiltonian
\DeclarePairedDelimiterXPP{\hamof}[2]{\ham_{#1}}{(}{)}{}{#2}	% for Hamiltonian
\newmacro{\lag}{\mathcal{L}}	% for Lagrangian
\DeclarePairedDelimiterXPP{\lagof}[2]{\lag_{#1}}{(}{)}{}{#2}	% for Lagrangian
\newmacro{\mom}{p}
\newmacro{\pos}{q}
\newmacro{\vel}{v}
\newmacro{\velalt}{w}
\newmacro{\hamilt}{\mathcal{H}}
\newmacro{\hamiltalt}{\bar\hamilt}
\newmacro{\lagrangian}{\mathcal{L}}
\newmacro{\lagrangianalt}{\bar\lagrangian}
\newmacro{\curve}{\gamma}	% for generic curve
\DeclarePairedDelimiterXPP{\curveat}[1]{\curve}{(}{)}{}{#1}	% for curve
\DeclarePairedDelimiterXPP{\diffcurveat}[1]{\dot\curve}{(}{)}{}{#1}	% for curve velocity
\newmacro{\curves}{\Gamma}
\DeclarePairedDelimiterXPP{\curvesat}[3]{\curves_{#1}}{(}{)}{}{#2;#3}	% for trajectory
\newmacro{\contcurves}{\contfuncs}
\DeclarePairedDelimiterXPP{\contcurvesat}[2]{\contcurves_{#1}}{(}{)}{}{#2}	% for trajectory
\newmacro{\lint}{\cstate}	% for linear interpolation
\DeclarePairedDelimiterXPP{\lintat}[1]{\lint}{(}{)}{}{#1}	% for linear interpolation
\newmacro{\qpot}{B}	% for quasi potential
\newmacro{\qmat}{B}	% for cost matrix
\newmacro{\energy}{E}	% for energy levels
\newmacro{\action}{\mathcal{S}}
\newmacro{\act}{\mathcal{S}}
\DeclarePairedDelimiterXPP{\actof}[2]{\act_{#1}}{[}{]}{}{#2}	% for Lagrangian
\newmacro{\pth}{\curve}
\newmacro{\pthalt}{\varphi}
\newmacro{\pths}{\curves}
\newmacro{\dpth}{\xi}
\newmacro{\dpthalt}{\zeta}
\newmacro{\daction}{\mathcal{A}}
\newmacro{\quasipot}{V}
\newmacro{\dquasipot}{B}
\newmacro{\symdquasipot}{C}
\newmacro{\dquasipotalt}{\widetilde{\dquasipot}}
\newmacro{\invpot}{W}
\newmacro{\dinvpot}{\energy}
\newmacro{\logmgf}{H}
\newmacro{\contfuncs}{\mathcal{C}}
\newmacro{\map}{F}
\newmacro{\rate}{\rho}
\newmacro{\level}{s}
\newmacro{\eqcl}{\mathcal{K}}
\newmacro{\neqcl}{\nComps}
\newmacro{\primvar}{\alpha}
\newmacro{\bdprimvar}{\primvar_\infty}
\newmacro{\bdvar}{\noisepar_{\infty}^{2}}
\newmacro{\exponent}{s}
\newmacro{\bdpot}{\pot_\infty}
\newmacro{\potgbound}{C}
\newmacro{\ratenhd}{\mathcal{N}}
\newmacro{\diffcurr}{\delta \curr}
\newmacro{\comp}{\mathcal{K}}
\newmacro{\iComp}{i}
\newmacro{\jComp}{j}
\newmacro{\kComp}{k}
\newmacro{\nComps}{K}
\newmacro{\iGround}{0}
\newmacro{\ground}{\comp_{\iGround}}
\newmacro{\groundstates}{\sol[\indices]}
\newmacro{\asymptstables}{AS}
\newmacro{\nontrivialattract}{NTA}
\newmacro{\meas}{\mu}	% for measure
\newmacro{\altmeas}{\nu}	% for measure
\newmacro{\occmeas}{\meas}	% for measure
\newmacro{\borel}{\mathcal{B}}	% for Borel
\newmacro{\size}{\delta}	% for size
\newmacro{\toler}{\eps}	% for tolerance
\newcommand{\Pro}{\mathcal{P}(\Xi)}
\newcommand{\Probm}{\mathcal{P}_{1}(\Xi)}
\newcommand{\Ex}{\mathds{E}}
\newcommand{\Wass}{\mathds{W}}
\newcommand{\Lip}{\textup{Lip}}
\newcommand{\kk}{\textup{K}}
\newcommand{\E}{\mathcal E}
\newcommand{\Aa}{\mathcal A}
\newcommand{\dd}{\mathrm{d}}
\newcommand{\mm}{\mathsf{m}}
\newcommand{\hh}{\mathsf{h}}
\newcommand{\g}{\mathsf{g}}
\newcommand{\F}{\mathsf{f}}
\newcommand{\rr}{\mathsf{r}}
\newcommand{\e}{\mathbf{e}}
\newcommand{\bA}{\mathbf{A}}
\newcommand{\bB}{\mathbf{B}}
\newcommand{\bL}{\mathbf{L}}
\newcommand{\bT}{\mathbf{T}}
\newcommand{\X}{\mathcal{X}}
\newcommand{\EE}{\mathbf{E}}
\newcommand{\cB}{{\mathcal B}}
\newcommand{\cH}{{\mathcal H}}
\newcommand{\cK}{{\mathcal K}}
\newcommand{\cP}{{\mathcal P}}
\newcommand{\cZ}{{\mathcal Z}}
\newcommand{\I}{\iota}
\begin{document}

%**********************************************************************
%***    FRONT MATTER AND METADATA
%**********************************************************************
 
%----------------------------------------------------------------------
%%% TITLE & AUTHORS
%----------------------------------------------------------------------

\title[Stochastic Monotone Inclusion]{Stochastic Monotone Inclusion with Closed Loop Distributions}
	% for title data

%-------------------------------------------------------------------
\author
[H.~Ennaji]
{Hamza Ennaji$^{c,\ast}$}
\address{$^{c}$\,%
Corresponding author.}
\address{$^{\ast}$\,%
Univ. Grenoble Alpes, CNRS, Grenoble INP*, LJK, 38000 Grenoble, France.}
\email[Corresponding author]{hamza.ennaji@univ-grenoble-alpes.fr}
%-------------------------------------------------------------------
\author
[J.~Fadili]
{Jalal Fadili$^{\sharp}$}
%\address{$^{c}$\,%
%Corresponding author.}
\address{$^{\sharp}$\,%
ENSICAEN, Normandie Université, CNRS, GREYC, France.}
\email{jalal.fadili@ensicaen.fr}

%-------------------------------------------------------------------
\author
[H.Attouch]
{Hedy Attouch$^{\diamond}$}
\address{$^{\diamond}$\,%
IMAG, Université Montpellier, CNRS, France.}
\email{hedy.attouch@umontpellier.fr}
%-------------------------------------------------------------------

%---------------------------------------------------------------------

%----------------------------------------------------------------------
%%% ACKNOWLEDGMENTS
%----------------------------------------------------------------------
%\thanks{Thanks go here.}

%----------------------------------------------------------------------
%%% KEYWORDS
%----------------------------------------------------------------------
\subjclass[2020]{
Primary 34G25, 37N40, 46N10, 47H05, 49M30, 60J20}
\keywords{%
Monotone inclusions; Dynamical systems; Gradient flows; Inertial dynamics; Ollivier-Ricci curvature}

%----------------------------------------------------------------------
%%% ACKNOWLEDGMENTS
%----------------------------------------------------------------------
%\thanks{Thanks and acknowledgments can go here.}

%%%%%%%%%%%%%%%%%%%%%%%%%%
\makeatletter	% for custom tags
\newcommand{\thmtag}[1]{	% \thmtag{<tag>}
  \let\oldthetheorem\thetheorem	% Store \thetheorem
  \renewcommand{\thetheorem}{#1}	% Redefine it to a fixed value
  \g@addto@macro\endtheorem{	% At \end{theorem}, ...
    \addtocounter{theorem}{0}	% ...restore theorem counter value and...
    \global\let\thetheorem\oldthetheorem}	% ...restore \thetheorem
  }
\makeatother

\makeatletter	% for custom tags
\newcommand{\asmtag}[1]{	% \asmtag{<tag>}
  \let\oldtheassumption\theassumption	% Store \theassumption
  \renewcommand{\theassumption}{#1}	% Redefine it to a fixed value
  \g@addto@macro\endassumption{	% At \end{assumption}, ...
    \addtocounter{assumption}{0}	% ...restore assumption counter value and...
    \global\let\theassumption\oldtheassumption}	% ...restore \theassumption
  }
\makeatother

%----------------------------------------------------------------------
%%% ABSTRACT
%----------------------------------------------------------------------
\begin{abstract}
In this paper, we study in a Hilbertian setting, first and second-order monotone inclusions related to stochastic optimization problems with decision-dependent distributions. The studied dynamics are formulated as monotone inclusions governed by Lipschitz perturbations of maximally monotone operators  where  the concept of equilibrium plays a central role. We discuss the relationship between the $\Wass_1$-Wasserstein Lipschitz behavior of the distribution and the so-called coarse Ricci curvature. As an application, we consider the monotone inclusions associated with stochastic optimisation problems involving the sum of a smooth function with Lipschitz gradient, a proximable function and a composite term.
\end{abstract}

%**********************************************************************
%***    BODY TEXT
%**********************************************************************
\allowdisplaybreaks	% for breaking long displays
\acresetall	% for resetting acros
\acused{iid}
\acused{LHS}
\acused{RHS}

\dedicatory{Dedicated to the memory of Hedy Attouch, outstanding mathematician and beloved collaborator.}
\maketitle
%\setcounter{tocdepth}{1}
%\tableofcontents

\section{Introduction}
Recently, many problems in machine learning and risk management come in form of stochastic optimisation problems. Such problems aim to learn a decision rule from a data sample. This can be formulated in terms of optimization problems of the form
\begin{equation}\label{eq:optim_intro}
	\min_{x \in \R^n}~\Ex_{\xi\sim \mm}(f(x,\xi)) + g(x),
\end{equation} 
where $\mm$ is a probability measure, $\Ex_{\xi \sim \mm}$ is the expectation operator with respect to the measure $\mm$, $f(x,\xi)$ is a loss function of the decision $x$ at data point $\xi$ and $g$ is a regularizer. In this work, we are interested in \eqref{eq:optim_intro} in the case where the distribution $\mm$ depends itself on the decision $x$, \ie problems of the form	
\begin{equation}\label{eq:optim_intro_x}
	\min_{x \in \R^n}~\Ex_{\xi\sim \mm_{x}}(f(x,\xi)) + g(x),
\end{equation} 
In this case, one tries to learn a decision rule from a decision-dependent data distribution $\mm_x$. Problems of the form \eqref{eq:optim_intro_x} were addressed in the framework of performative prediction proposed in \cite{Perdomo&al,Mendler&al} and discussed with further algorithmic aspects in \cite{Drusvyatskiy&Xiao}. A typical example concerns prediction of loan default risks, that is the chance that a borrower won't be able to repay their loan. More precisely, banks take into account several parameters, including the default risk, to decide whether to accept a consumer's loan application and, if so, what interest rate will apply. It is clear that a high default risk implies a high interest rate, but a high interest rate increases the consumer's default risk. Thus, the predictive performance of the bank's model is not calibrated with respect to future results obtained by acting on the model. Another example concerns navigation apps, such as Google Maps (see, \eg \cite{ETA-Prediction, macfarlane2019your}), which  suggest routes with low travel time to users. This influences users' decisions to pick such routes and consequently, increases traffic on these routes, impacting travel times. Further applications and illustrations can be found in \cite[Appendix A]{Perdomo&al}. 

In general, problems of the form \eqref{eq:optim_intro_x} are difficult to solve. However, a natural approach consists in performing a repeated minimization procedure, \ie throughout iterations, one solves 
	\begin{equation}
		\label{eq:rrm_intro}
		x_{t+1} \in \argmin_{x \in \R^n} \Ex_{\xi\sim\mm_{x_t}}(f(x,\xi)) + g(x),
	\end{equation}
and then updates the distribution $\mm_{x_{t+1}}$. Under suitable assumptions that will be specified later, the sequence $(x_t)_t$ generated by the repeated minimization procedure \eqref{eq:rrm_intro} admits a fixed point $\bar{x}$. Such a point turns out to be an equilibrium with respect to the distribution $\mm_{(.)}$ in the following sense:
\begin{equation}
	\label{eq:equilibrium_intro}
	\bar{x} \in \argmin_{x \in \R^n}~\Ex_{\xi\sim \mm_{\bar{x}}}(f(x,\xi)) + g(x),
\end{equation}
 that is, $\bar{x}$ solves \eqref{eq:optim_intro_x} for the induced distribution $\mm_{\bar{x}}$. So instead of solving \eqref{eq:optim_intro_x}, we look at an equilibrium point in the sense of \eqref{eq:equilibrium_intro}. Notice that in terms of operators, \eqref{eq:equilibrium_intro} can be written (formally, for instance) as the monotone inclusion:
 \begin{equation}\label{eq:inclusion_intro}
 A(\bar x)+ B(\bar{x})\ni 0
 \end{equation}
 with $A(x) = \partial g(x)$ and $B(x) = \Ex_{\xi\sim\mm_{\bar x}}(\nabla f(x,\xi))$ ($\nabla$ being always the gradient w.r.t. to the variable $x$), where $A+B$ is monotone. That is $\bar{x}$ is a zero of the sum of two monotone operators. One strategy to solve problems of the form \eqref{eq:inclusion_intro} is to consider some continuous and discrete dynamical systems whose trajectories may converge, under suitable assumptions, to an element in $(A+B)^{-1}(0)$, the zero set of the sum $A+B$. For instance, when $B\equiv 0$ and $A = \partial g$ where $g$ is a proper convex lower semicontinuous function on $\R^n$, it is well known since the works of Brézis, Baillon and Bucker \cite{Baillon-Brezis,Bruck}, that each trajectory of the subgradient flow
 \begin{equation}
 	\dot{x}(t) + \partial g(x) \ni 0 ~~\mbox{with}~~x(0) = x_0\in\R^n,
 \end{equation}
 converges to a minimizer of $g$, and thus a zero of $\partial g$, provided $\argmin g\neq\emptyset$.
 
 Designing algorithms and dynamical systems with rapid convergence properties to solve monotone inclusions is at the core of many fields in modern optimization, partial differential equation, game theory, etc. The literature is extensive, and to name only a few, the reader is referred to \cite{Alvarez&Attouch,Attouch&Peypouquet,Su&al,Alvarez,Attouch&Cabot&Redont,Attouch-Chbani-Fadili-Riahi,Bot&al, kim2021accelerated,Lin&Jordan} and the references therein.\\

 In this work, we address closed-loop differential inclusions of the form 
 \begin{equation}\label{eq:MI_intro}
 \dot{x}(t) + A(x(t)) + {\Ex_{\xi\sim \mm_{x(t)}}}\left(B(x(t),\xi)\right)\ni 0,
 \end{equation}
 where $A$ is a maximally monotone operator, and $B(\cdot,\xi)$ is a single-valued mapping for all $\xi \in \Xi$. The particularity of such a dynamic is, of course, the presence of  the random operator ${\Ex_{\xi\sim \mm_{(.)}}}\left(B(.,\xi)\right)$ where the random variable $\xi$ has a trajectory-dependent distribution $\mm_{x(t)}$. Thus, it is not straightforward how to  address \eqref{eq:MI_intro} within the classical framework (see, \eg \cite{brezismaxmon,Barbu}). Yet, a clever reformulation of \eqref{eq:MI_intro}, based on the notion of equilibrium,  as a monotone inclusion governed by a Lipschitz perturbation of a maximally monotone operator will allow us to tackle this issue. Then we investigate inertial dynamics related to \eqref{eq:MI_intro}. Indeed, since the work of Polyak \cite{polyak1964some}, who considered a system of the form
 \begin{equation}\label{eq:polyak_intro}
 \tag{HBF}
 			\ddot{x}(t) + \gamma\dot{x}(t)  + \nabla f(x(t)) = 0,
 \end{equation}
 where $\gamma>0$ is called the viscous damping coefficient, the introduction of inertial dynamics to accelerate  optimization methods has gained a lot of attention and led to many developments (see, \eg \cite{attouch2000heavy,alvarez2002second,attouch2016fast,attouch2014dynamical,attouch2018fast,Su&al} and the references therein).
 
In this paper, we then consider second-order dynamics of the form
 	\begin{equation}\label{eq:MI_intro_2nd_order}
		\ddot{x}(t) + \gamma(t)\dot{x}(t)  + \nabla f_{\mm_{\bar{x}}}(x(t)) + \omega \nabla^2  f_{\mm_{\bar{x}}}(x(t)) \dot{x}(t) +  \e_{\bar{x}}(x(t)) + \omega\frac{\dd }{\dd t}\e_{\bar{x}}(x(t)) = 0,
	\end{equation}
	where $f_{\mm_{\bar{x}}}(x) = \Ex_{\xi\sim\mm_{\bar{x}}}(f(x,\xi))$, $\e_{\bar{x}}: \cH \to \cH$ is a perturbation operator and $\omega$ is the so-called Hessian-driven damping coefficient.
When $\gamma(t)\equiv\gamma$, $f_{\mm_{\bar{x}}} = f$ (\ie without a stochastic structure) and $\e_{\bar{x}}(x) = 0$, systems of the form \eqref{eq:MI_intro_2nd_order} were first studied in \cite{alvarez2002second}. Later, this system was combined with an asymptotic vanishing damping $\gamma(t) = \frac{\alpha}{t}$, for $\alpha>0$ in \cite{attouch2016fast}. Several recent studies have been devoted to this topic (see, \eg \cite{kim2021accelerated,Shi&al,Bot&al,Lin&Jordan,Attouch-Chbani-Fadili-Riahi}).

\subsection{Statement of the problem}
 Throughout, $\cH$ is a real Hilbert space endowed with the scalar product $\langle .,.\rangle$ and induced norm $\Vert\cdot\Vert$, and $\Xi$ is a Polish space, \ie separable and completely metrizable. 

We consider the closed-loop differential inclusion
\begin{equation}
	%\tag{SMI}
	\label{eq:eq0}
	\left\lbrace
	\begin{aligned}
		&\dot{x}(t) + A(x(t)) + {\Ex_{\xi\sim \mm_{x(t)}}}\left(B(x(t),\xi)\right)\ni 0,~\text{a.e}~t > t_0>0, \\
		&x(t_0) = x_0 \in \overline{\dom(A)},
	\end{aligned}
	\right.
\end{equation}
where $\mm_{x}$ is a family of probability distributions on $\Xi$ indexed by $x\in \cH$. We will work under the standing assumption:
\begin{assumption}\label{assump:1}{~}\medskip
\begin{itemize}
\item $A: \cH \rightrightarrows \cH$ is a set-valued maximal monotone operator such that $\inter(\dom(A))\neq \emptyset$;
\item $B: \cH \times \Xi \to \cH$ is single-valued with $\xi\mapsto B(x,\xi)$ $\mm_x$-measurable, and $\exists \beta > 0$ such that $\xi\mapsto B(x,\xi)$ is $\beta$-Lipschitz continuous for every $x\in \cH$; 
%\item $x\mapsto B(x,\xi)$ is monotone for every $\xi$;
\end{itemize}
\end{assumption}
Observe that when $x \mapsto \mm_{x}(C)$ is a measurable function on $\cH$ for each fixed $C \in \cB$, where $\cB$ is a countably generated $\sigma$-field on $\Xi$, $(\mm_{x})_{x \in \cH}$ can be viewed as a random walk on $(\cH \times \Xi,\mathcal{F} \otimes \cB)$ where $\mathcal{F}$ is a $\sigma$-field on $\cH$; see Section~\ref{section:ricci}.
\medskip

\begin{example}
	\begin{enumerate}[1.]
		\item  Typically, \eqref{eq:eq0} covers the case of stochastic optimization problems with a state-dependent distribution studied in \cite{Perdomo&al,Drusvyatskiy&Xiao} by taking $A = \partial g$ and $B = \nabla f$ for $g\in\Gamma_{0}(\cH)$ and $f \in C^1(\cH \times \Xi)$ whose gradient $\nabla f(x,\cdot)$ is $\beta$-Lipschitz continuous for every $x \in \cH$. The last assumption ensures that ${\Ex_{\xi\sim \mm}}\left(f(\cdot,\xi)\right)$ is $C^1(\cH)$ whose gradient is ${\Ex_{\xi\sim \mm}}\left(\nabla f(\cdot,\xi)\right)$ which is $\beta$-Lipschitz continuous.
		\item Taking $A = N_{K}$ the normal cone of a nonempty closed convex set $K$ of admissible decisions, we recover the framework of variational inequalities addressed recently in \cite{cutler2022stochastic}.
	\end{enumerate}
\end{example}

To simplify the presentation, we set, for any measure $\mm\in\Pro$
\begin{equation}\label{eq:f_mu}
	B_{\mm}(x) =  {\Ex}_{\xi\sim \mm}\left(B(x,\xi)\right)~\mbox{and}~F_{\mm}(x)  =  A(x) + B_{\mm}(x) .
\end{equation}

Using this notation, the system \eqref{eq:eq0} can be simply rewritten as
\begin{equation}
	\tag{SMI}\label{eq:eq1}
	\left\lbrace
	\begin{aligned}
		\dot{x}(t) &+ F_{\mm_{x(t)}}(x(t))\ni 0,~\text{a.e}~t > t_0,\\
		x(t_0) &= x_0 \in \overline{\dom(A)}.
	\end{aligned}
	\right.
\end{equation}
 The acronym \eqref{eq:eq1} for the above dynamic stands for Stochastic Monotone Inclusion. Though "monotone" may seem as an abuse of terminology because the measure $\mm_{(.)}$ depends on the trajectory, and $B_{\mm}$ is not even monotone. We will show later that this terminology is still justified as \eqref{eq:eq1} can be reformulated as a Lipschitzian perturbation of a monotone inclusion (see Section~\ref{subsec:wellposed}).

\subsection{ Contributions and organization of the paper}
The paper is organized as follows. In \cref{section:firstorder} we address first-order monotone inclusions with closed-loop distributions. We prove the existence of equilibria as well as the well-posedness of the dynamics and  convergence properties of the trajectories. In \cref{section:hessian2ndorder} we study asymptotic convergence properties of the trajectories of second-order dynamics with closed-loop distributions via viscous and Hessian damping. This allows us in particular to cover problems of the form \eqref{eq:optim_intro_x}. \cref{section:ricci} contains a discussion concerning the Lipschitz behavior of the family $(\mm_x)_{x \in \cH}$ with respect to the Wasserstein distance and some consequences in the framework of Markov chains on metric random walk spaces. In \cref{section:application} we discuss the inertial primal-dual algorithm as an application of our results. Finally, \cref{section:conclusion} contains some conclusions and discusses some future works.

%-----------------------------------------------------------------------
%% Preliminaries
%-----------------------------------------------------------------------
\section{Notation and preliminaries}
In this section we fix some notation and present some notions and results that will be used. %Throughout this paper, $\cH$ is a real Hilbert space, $\langle .,.\rangle$ is the scalar product on $\cH$ and $\Vert\cdot\Vert$ is the induced norm.
\subsection{Convex analysis}
The domain of a function $g$ on $\cH$ is defined by $\dom(g) = \{x\in\cH:~g(x)<\infty\}$. We denote by $\Gamma_{0}(\cH)$ the class of proper (bounded from below and $\dom(g) \neq \emptyset$), lower semicontinuous (l.s.c) and convex functions on $\cH$ with values in $\R\cup\{+\infty\}$. We say that $g$ is $\alpha$-strongly convex, for $\alpha>0$, if $g - \frac{\alpha}{2}\Vert .\Vert^{2}$ is convex. 

The subdifferential of $g$ is defined as
\[
\partial g:x\in\cH\mapsto\{ v\in\cH:~g(y)\geq g(x) + \langle v,y-x\rangle \}.
\]
We recall the following Fermat's optimality condition for $g \in \Gamma_0(\cH)$,
\[
0\in\partial g(x^{\star}) \Leftrightarrow x^\star \in \argmin g(\cH).
\]
\begin{definition}[Differentiability]
	Let $g:\cH\to\R\cup\{\infty\}$ and $x\in\textup{int}(\dom(g))$. We say that $g$ is (Fr\'echet) differentiable at $x$ if there exists $v\in\cH$ such that
	\[
	\lim_{h\to 0} \frac{g(x+h) - g(x) - \langle v,h\rangle}{\Vert h\Vert} = 0.
	\]
	The unique vector $v$ satisfying this condition is the gradient of $g$ at $x$ denoted by $\nabla g(x)$.
\end{definition}
If $g \in \Gamma_{0}(\cH)$ and differentiable at $x$, then $\partial g(x)=\{\nabla g(x)\}$.\\

 We also recall the following.
\begin{definition}[$L$-smoothness]
	Let $L\geq 0$ and  $g:\cH\to\R\cup\{\infty\}$. We say that $g$ is $L$-smooth over $D\subset\cH$ if it is differentiable over $D$ and 
	\[
	\Vert \nabla g(x) - \nabla g(y)\Vert \leq L \Vert x - y\Vert~\mbox{for any}~x,y\in D.
	\]
	We denote by $C^{1,1}_{L}(D)$ the class of $L$-smooth functions over $D$.
\end{definition}
Given a function $g:\cH\to\R\cup\{+\infty\}$, it proximal mapping is defined through
\[
\prox_{f}(x) = \argmin_{y \in \cH}\left\{g(y) + \frac{1}{2}\Vert y - x\Vert^{2}\right\}~\mbox{for any}~x\in\cH.
\]
When $g = \I_{K}$ the indicator function of a nonempty closed convex set $K\subset\cH$, then $\prox_{f} = \proj_{K}$, the projector onto $K$.
For further details and notion, we refer the reader to \cite{Baucshke&Combettes}.

\subsection{Operator theory}
The domain of the set-valued operator $A:\cH\rightrightarrows\cH$ is $\dom(A) = \{x\in\cH:~A(x)\neq \emptyset\}$, its graph is $\gra (A) = \{ [x,u]\in\cH\times\cH:~u\in Ax\}$ and its zeros set is $\zer (A)  = \{x\in\cH:~0\in A(x)\}:= A^{-1}(0)$. 
A selection of $A$ is an operator $T:\dom A\to\cH$ such that, $Tx\in Ax$ for any $x\in\dom A$. We write $A:\cH\to\cH$ to indicate that $A$ is single-valued. In the following, we gather some main properties that are essential for the rest of the paper.
\begin{definition}\label{def:op_properties}
	\begin{itemize}
		\item We say that $A$ is $\beta$-Lipschitz continuous if it is single-valued over its domain and
		\begin{equation}\label{eq:Lip_op}
			\Vert Ax - Ay \Vert \leq \beta \Vert x-y\Vert , \quad \forall x,y\in \dom A.
		\end{equation}
		\item We say that $A:\cH\rightrightarrows\cH$ is monotone if
		\begin{equation}\label{eq:monotone_op}
			\langle x-y,u-v\rangle \geq 0 , \quad \forall [x,u],[y,v]\in\gra A.
		\end{equation}
		\item We say that $A$ is maximal monotone if there exists no monotone operator $B$, \ie satisfying \eqref{eq:monotone_op}, such that $\gra A\subset \gra B$.
		\item We say that $A$ is uniformly monotone with modulus $\phi:[0,\infty)\to [0,\infty)$ if $\phi$ is increasing, $\phi(0) = 0$, $\lim_{t\to\infty}\phi(t) = \infty$ and
		\begin{equation}\label{eq:uniformly_monotone_op}
			\langle x-y,u-v\rangle\geq \Vert x-y\Vert \phi\left(\Vert x-y\Vert\right) , \quad \forall [x,u],[y,v]\in\gra A.
		\end{equation}
		
		\item  We say that $A$ is $\mu$-strongly monotone, with $\mu >0$, if 
		\begin{equation}\label{eq:str_monotone_op}
			\langle x-y,u-v\rangle\geq \mu \Vert x-y\Vert^{2} , \quad \forall [x,u],[y,v]\in\gra A.
		\end{equation}
			\end{itemize}
\end{definition}
\begin{remark}
	\begin{itemize}
		\item  Note that if $A$ is $\mu$-strongly monotone is equivalent to saying that $A-\mu\Id$ is monotone.
				\item The definition of uniform monotonicity given by \eqref{eq:uniformly_monotone_op} is slightly different from the one in \cite[Definition 22.1]{Baucshke&Combettes}. 
		\item If $A$ is $\mu$-strongly monotone, then it is uniformly monotone with modulus $\phi(t) = \mu t$.
	\end{itemize}
\end{remark}
\begin{example}
	\begin{itemize}
		\item The typical example of a maximal monotone operator is the subdifferential $\partial g$ of a function $g\in\Gamma_{0}(\cH)$. We usually refer to such an operator as a subpotential maximal monotone operator.
	\end{itemize}
\end{example}

\subsection{Monotone inclusions}
Let $A$ be a maximal monotone operator on $\cH$ and a single-valued mapping $D:[t_0,+\infty[\times\overline{\dom(A)} \rightarrow \cH$ and consider the following differential inclusion
\begin{equation}\label{eq:solution_MI}
\left\{\begin{array}{l}
\dot{x}(t)+A(x(t))+D(t,x(t)) \ni 0, \quad t \in [t_0, T] \\
x(t_0)=x_0\in\dom(A).
\end{array}\right.
\end{equation}

\begin{definition}\label{def:strong-solution}
We will say that $x:[t_0,T]\to \cH$ is a strong solution trajectory on $[t_0,T]$ of \eqref{eq:solution_MI} if the following properties are satisfied:
\begin{enumerate}[label=(\alph*)]
\item \label{defsolforte-1}
$x$ is continuous on $[t_0,T]$ and absolutely continuous on any compact subset of $]t_0,T[$ (hence almost everywhere differentiable);
\item \label{defsolforte-3}
$x(t)\in\dom(A)$ for almost every $t\in]t_0,T]$, and \eqref{eq:solution_MI} is verified for almost every $t\in]t_0,T[$.
\end{enumerate}

A trajectory $x:[t_0,+\infty[\to\cH$ is a strong global solution of \eqref{eq:solution_MI} if it is a strong solution on $[t_0,T]$ for any $T>t_0$.
\end{definition}

For further details, we refer the reader to the classical monographs \cite{brezismaxmon} or \cite{Barbu}.

\subsection{Transportation distance}
Denote $\Pro$ the space of probability measures on $\Xi$. For $\mm_1,\mm_2\in\Pro$, the $\Wass_{1}$-Wasserstein distance is defined by
\begin{equation}\label{eq:Wasserstein1}
	\Wass_{1}(\mm_1,\mm_2) = \sup_{h\in\Lip_1} \left|\Ex_{\xi\sim\mm_1}h(\xi) - \Ex_{\zeta\sim\mm_2}h(\zeta)\right|,
\end{equation}
where $\Lip_1$ is the space of $1$-Lipschitz continuous functions $h: \Xi \to \R$.
%-----------------------------------------------------------------------
%% Section 1
%-----------------------------------------------------------------------
\section{First-order monotone inclusions}\label{section:firstorder}

In this section we perform the analysis of the first-order monotone inclusion \eqref{eq:eq0}. More precisely, we discuss the existence and uniqueness of solutions as well as the convergence of trajectories. Recall that the dynamic \eqref{eq:eq0} is governed by the operator $F_{\mm_x} = A + B_{\mm_x}$. We first prove the existence of an equilibrium point $\bar{x}$ which will allow us to reformulate \eqref{eq:eq0} in a suitable form.

The following assumption is essential for the convergence analysis. It describes the sensitivity of the distribution to shifts in the index (here trajectory). It is widely used in the literature (see, \eg \cite{Perdomo&al,Mendler&al, Drusvyatskiy&Xiao,wood2023stochastic}). We will discuss how it closely relates to the so-called coarse Ricci curvature in \cref{section:ricci}.
\begin{assumption}[Lipschitz distributions]\label{assump:2}
	There exists $\tau >0$ such that
	\[
	\Wass_{1}(\mm_x,\mm_y) \leq \tau \Vert x - y\Vert,~\mbox{for all}~x,y\in \cH.
	\]
\end{assumption}
%Here $\Wass_{1}$ is the $1$-Wasserstein distance.

The following two assumptions are standard monotonicity assumptions and will be crucial for the well-posedness of the dynamics and to prove existence and uniqueness of equilibria.
\begin{assumption}[Strong monotonicity]\label{assump:3}
	$\exists \mu > 0$ such that for all $x\in\cH$, $F_{\mm_{x}}$ is $\mu$-strongly monotone.
\end{assumption}
The strong monotonicity \cref{assump:3} can be weakened to uniform monotonicity in the following sense.
% (see \eg \cite[Definition 22.1]{Baucshke&Combettes}).% Let $\phi:[0,\infty[\to[0,\infty[$ be a continuous, strictly increasing function vanishing only at $0$ such that $\lim_{t\to\infty}\phi(t)/t = \infty$.
\begin{assumption}[Uniform monotonicity]\label{assump:4}
	There exists a function $\phi$ satisfying
	\[
	\phi(t) > \beta\tau t,~\forall t>0 ,
	\]
	such that for all $x\in\cH$, $F_{\mm_{x}}$ is uniformly monotone with a modulus $\phi$.
\end{assumption}
Finally, define the following parameter $\rho:=\frac{\beta\tau}{\mu}$. As we will see, (see also \cite{Drusvyatskiy&Xiao,Perdomo&al}), the parameter regime $\rho <1$ will play a crucial role in the analysis of the convergence of the trajectories.

\subsection{Existence and uniqueness of equilibria}
	\begin{definition}(Equilibrium point) 
		We say that $\bar{x}\in\cH$ is at equilibrium with respect to the family of probability measures $(\mm_x)_{x \in \cH}$ if
		\begin{equation}\label{eq:equilibrium}
			0\in \  F_{\mm_{\bar{x}}}(\bar{x}) .
		\end{equation}
	\end{definition}
	
	In case $A = \partial g$ and $B = \nabla f$ where $g\in\Gamma_{0}(\cH)$ and $f(x,\cdot)\in C^{1,1}_{\beta}(\Xi)$, this definition is to be compared to the one introduced in  \cite{Perdomo&al} (see also \cite{Drusvyatskiy&Xiao}). Indeed, \eqref{eq:equilibrium} reduces to:
	\begin{equation}\label{eq:equil_example}
		\bar{x}\in\argmin_{x \in \cH} \Ex_{\xi\sim\mm_{\bar{x}}}(f(x,\xi)) + g(x).
	\end{equation}
	Solutions of  \eqref{eq:equil_example} are exactly the fixed points of the repeated minimization procedure, that is, starting from some $x_0$, we generate the following sequence for $t\geq 0$
	\begin{equation}
		\label{eq:rrm}
		x_{t+1} = S(x_t) :=\argmin_{x \in \cH} \Ex_{\xi\sim\mm_{x_t}}(f(x,\xi)) + g(x).
	\end{equation}

	In \cite[Theorem 3.5]{Perdomo&al} it is shown that if $f$ is $C^1$ in both variables,  $\xi\mapsto \nabla f(x,\xi)$ is $\beta-$Lipschitz and $\Ex_{\xi\sim\mm_{x}}(f(.,\xi))$ is $\mu$-strongly convex for all $x\in \cH$ with $\rho<1$, then, under \cref{assump:2}, the iterates of \eqref{eq:rrm} converge to a unique stable point. Their proof is essentially based on a fixed point argument. In \cite[Proposition 4.1]{Perdomo&al}, they show the existence of equilibrium points under weaker assumptions on the loss $f$.  Specifically, they demonstrate that if $f$ is convex and jointly continuous, then equilibrium points exist provided $\dom(g)$ is compact. However, in this case the equilibrium is not necessarily unique. In the following lemma, we show the existence of equilibrium in the sense of \eqref{eq:equilibrium}.

	\begin{theorem}[Existence and uniqueness of equilibrium point]\label{thm:equilibrium}
		Under \cref{assump:4}, the map
		\[
		S:x\in\cH\mapsto \zer(F_{\mm_{x}})=\{u\in\cH:~0\in F_{\mm_{x}}(u)\},
		\]
		is a contraction. In particular, the equilibrium $\bar{x}$ is unique. If moreover, \cref{assump:3} holds instead, \ie  $F_{\mm_{x}}$ is $\mu$-strongly monotone for $\mu>0$, the mapping $S$
		is $\rho$-Lipschitz with $\rho:=\frac{\beta\tau}{\mu}$. Thus for $\rho < 1$, there exists a unique equilibrium point $\bar{x}$.
	\end{theorem}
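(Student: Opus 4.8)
The plan is to establish, in order: (i) $S$ is a well-defined single-valued self-map of $\cH$; (ii) the master estimate $\phi\bigl(\Vert S(x)-S(y)\Vert\bigr)\le\beta\tau\Vert x-y\Vert$ for all $x,y\in\cH$; and (iii) the two conclusions, by specializing (ii) to \cref{assump:4} and then to \cref{assump:3}.

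For (i), fix $x\in\cH$. The operator $F_{\mm_x}=A+B_{\mm_x}$ (see \eqref{eq:f_mu}) is maximal monotone --- it is a Lipschitz-type perturbation of the maximal monotone operator $A$ with $\dom(B_{\mm_x})=\cH$ --- and it is uniformly monotone by \cref{assump:4}, with a modulus $\phi$ satisfying $\phi(0)=0$, $\phi$ increasing and $\phi(t)\to\infty$. Such an operator is coercive, hence surjective, so $\zer(F_{\mm_x})\neq\emptyset$; uniform monotonicity makes this zero set a singleton. Thus $S(x)$ is the unique point with $0\in F_{\mm_x}(S(x))$ and $S\colon\cH\to\cH$.

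For (ii), fix $x,y\in\cH$ and set $\bar u:=S(x)$, $\bar v:=S(y)$. From $0\in F_{\mm_x}(\bar u)=A(\bar u)+B_{\mm_x}(\bar u)$ we get $-B_{\mm_x}(\bar u)\in A(\bar u)$, and similarly $-B_{\mm_y}(\bar v)\in A(\bar v)$. Adding $B_{\mm_y}(\bar u)$ to the first relation gives $B_{\mm_y}(\bar u)-B_{\mm_x}(\bar u)\in F_{\mm_y}(\bar u)$, while $0\in F_{\mm_y}(\bar v)$; applying the uniform monotonicity of $F_{\mm_y}$ to these two pairs in $\gra F_{\mm_y}$ and using Cauchy--Schwarz yields, when $\bar u\neq\bar v$,
\[
\Vert B_{\mm_y}(\bar u)-B_{\mm_x}(\bar u)\Vert\ \ge\ \phi\bigl(\Vert \bar u-\bar v\Vert\bigr).
\]
To bound the left-hand side, note that for every $w\in\cH$ with $\Vert w\Vert\le 1$ the scalar map $\xi\mapsto\langle w,B(\bar u,\xi)\rangle$ is $\beta$-Lipschitz on $\Xi$ by \cref{assump:1}, so $\langle w,B(\bar u,\cdot)\rangle/\beta\in\Lip_1$, and the definition \eqref{eq:Wasserstein1} gives $\bigl|\langle w,B_{\mm_y}(\bar u)-B_{\mm_x}(\bar u)\rangle\bigr|\le\beta\,\Wass_{1}(\mm_x,\mm_y)$; taking the supremum over such $w$ and invoking \cref{assump:2},
\[
\Vert B_{\mm_y}(\bar u)-B_{\mm_x}(\bar u)\Vert\ \le\ \beta\,\Wass_{1}(\mm_x,\mm_y)\ \le\ \beta\tau\,\Vert x-y\Vert .
\]
Combining the two displays (the case $\bar u=\bar v$ being trivial) proves (ii).

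Finally, for (iii): under \cref{assump:4}, $\phi(t)>\beta\tau t$ for $t>0$, so whenever $S(x)\neq S(y)$ we get $\beta\tau\Vert S(x)-S(y)\Vert<\phi\bigl(\Vert S(x)-S(y)\Vert\bigr)\le\beta\tau\Vert x-y\Vert$, i.e. $\Vert S(x)-S(y)\Vert<\Vert x-y\Vert$; since $t\mapsto\phi^{-1}(\beta\tau t)$ is a comparison function, $S$ is a contraction (in the generalized Boyd--Wong sense) and therefore has a unique fixed point, which is precisely the unique equilibrium \eqref{eq:equilibrium}. Under \cref{assump:3}, $\phi(t)=\mu t$, and the same chain reads $\mu\Vert S(x)-S(y)\Vert^2\le\langle S(x)-S(y),\,B_{\mm_y}(S(x))-B_{\mm_x}(S(x))\rangle\le\beta\tau\Vert x-y\Vert\,\Vert S(x)-S(y)\Vert$, whence $\Vert S(x)-S(y)\Vert\le\rho\Vert x-y\Vert$ with $\rho=\beta\tau/\mu$; for $\rho<1$ the Banach fixed point theorem on the complete space $\cH$ yields the unique $\bar x=S(\bar x)$, equivalently the unique equilibrium. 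I expect the main obstacle to be step (i): justifying maximal monotonicity of $F_{\mm_x}$ (hence nonemptiness of $\zer(F_{\mm_x})$) from the standing assumptions on $B$, and, in the \cref{assump:4} regime, upgrading the strict contraction to a genuine existence statement, which relies on continuity/monotonicity of $\phi$ via a Boyd--Wong-type fixed point theorem.
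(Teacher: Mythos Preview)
Your proof is correct and follows essentially the same approach as the paper: both establish well-definedness of $S$ via surjectivity from coercivity plus uniqueness from strict monotonicity, then derive the master estimate $\phi(\Vert S(x)-S(y)\Vert)\le\beta\tau\Vert x-y\Vert$ by plugging the shifted element $B_{\mm_{\cdot}}(S(\cdot))-B_{\mm_{\cdot}}(S(\cdot))$ into the uniform-monotonicity inequality and applying Cauchy--Schwarz together with the bound $\Vert B_{\mm_x}(u)-B_{\mm_y}(u)\Vert\le\beta\tau\Vert x-y\Vert$ (this is exactly \cref{corollary:1}, which you reprove inline). The only cosmetic difference is that you apply uniform monotonicity to $F_{\mm_y}$ at the pair $(S(x),S(y))$ whereas the paper uses $F_{\mm_x}$; the argument is symmetric. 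Your caveat about step~(i) and about needing a Boyd--Wong-type fixed point for the \cref{assump:4} regime is well taken---the paper invokes the plain Banach theorem from $\Vert S(x)-S(y)\Vert\le\phi^{-1}(\beta\tau\Vert x-y\Vert)<\Vert x-y\Vert$ without elaborating, so if anything you are being more careful here than the paper.
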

	\begin{proof}
		First, we see that $S$ is well defined. Indeed, for any $x\in\cH$, $\zer (F_{\mm_{x}})$ is nonempty, and is in fact a singleton due to \cref{assump:4}. To see this, we argue as in \cite[Proposition 22.11]{Baucshke&Combettes}. Fix $[y_0,u_0]\in \gra F_{\mm_{x}}$. We have for any $[y,u]\in \gra F_{\mm_{x}}$:
		\begin{equation}
			\begin{aligned}
				\Vert y-y_0\Vert  \Vert u\Vert \geq \langle y-y_0,u\rangle &=    \langle y-y_0,u - u_0\rangle +  \langle y-y_0,u_0\rangle\\
				&\geq \Vert y-y_0\Vert \phi\left(\Vert y-y_0\Vert \right) - \Vert y-y_0\Vert \Vert u_0\Vert.
			\end{aligned}
		\end{equation}
		Since $\lim_{t\to\infty}\phi(t) = \infty$ we infer that $\inf_{u\in \gra F_{\mm_{x}}(y)}\Vert u\Vert\to\infty$ as $\Vert y\Vert\to\infty$ and thus $F_{\mm_{x}}$ is surjective (see \cite[Corollary 21.25]{Baucshke&Combettes}). Moreover, in view of strict monotonicity of $F_{\mm_{x}}$,  $\zer (F_{\mm_{x}})$ is a singleton (see, \eg \cite[Proposition 23.35]{Baucshke&Combettes}).
		
		Now, pick $x,y\in\cH$. We have that $0\in F_{\mm_{x}}(S(x))$ and $0\in F_{\mm_{y}}(S(y))$. In particular, $-B_{\mm_{y}}(S(y)) \in A(S(y))$, which gives that $B_{\mm_{x}}(S(y) ) - B_{\mm_{y}}(S(y))\in F_{\mm_{x}}(S(y))$. We get, thanks to \cref{assump:4} 
		\[
		\Vert S(x) - S(y)\Vert\phi\Big(\Vert S(x) - S(y)\Vert\Big) \leq \langle u - v,S(x) - S(y)\rangle,~\mbox{for any}~(u,v)\in F_{\mm_{x}}(S(y))\times F_{\mm_{x}}(S(x)).
		\]
		Then, taking $u = B_{\mm_{x}}(S(y) ) - B_{\mm_{y}}(S(y))$ and $v = 0$, we get, using Cauchy-Schwarz inequality
		\[
		\phi\Big(\Vert S(x) - S(y)\Vert\Big) \leq \Vert B_{\mm_{x}}(S(y)) - B_{\mm_{y}}(S(y))\Vert.
		\]
		We get, using \cref{corollary:1} below
		\begin{equation}\label{eq:inequality_psi}
			\phi\Big(\Vert S(x) - S(y)\Vert\Big) \leq \beta\tau\Vert x- y\Vert.
		\end{equation}
		Since $\phi$ is strictly increasing, the last inequality gives, thanks to \cref{assump:4},
		\[
		\Vert S(x) - S(y)\Vert \leq \phi^{-1}\Big(\beta\tau\Vert x- y\Vert\Big) < \Vert x- y\Vert,
		\]
		and by the Banach fixed-point theorem (see \cref{thm:fixedpoint}), $S$ has a unique fixed point $\bar{x}$.
		
		Now if $F_{\mm_{x}}$ is $\mu$-strongly monotone, it is in particular uniformly monotone with modulus $\phi(t) = \mu t$. Equation \eqref{eq:inequality_psi} gives
		\[
		\Vert S(x) - S(y)\Vert \leq \rho\Vert x- y\Vert,
		\]
		with $\frac{\beta\tau}{\mu} :=\rho$. Again, we conclude using \cref{thm:fixedpoint}.
	\end{proof}
	\begin{remark}
		In the strongly monotone case, \cref{assump:4} incorporates the parameter regime $\rho < 1$ which appears in particular in \cite[Theorem 3.5]{Perdomo&al}.
	\end{remark}

\subsection{Well-posedness}\label{subsec:wellposed}
Before stating the main result of this section, let us fix some extra notation and properties. Assume that \cref{assump:4} holds, and denote by 
	\begin{equation}\label{eq:phi}
			\varphi(t) = \phi(t) - \beta \tau t,
				\end{equation}
		where $\phi$ is the modulus of uniform monotonicity of $F_{\mm_{\bar{x}}}$.

	We prove the following.
	\begin{lemma}\label{lemma:M}
		Let $a>0$ and define 
		\begin{equation}\label{eq:modulus}
			\theta(z) := \int_{z}^{a}\frac{\dd s}{\varphi(s)}.
		\end{equation}
		Then $\theta$ is nonincreasing and $\lim_{z\to 0^{+}}\theta(z) = \infty$.
		
	\end{lemma}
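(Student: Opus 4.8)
The plan is to analyze the function $\theta(z) = \int_z^a \frac{ds}{\varphi(s)}$ directly from the properties of $\varphi(t) = \phi(t) - \beta\tau t$. First I would establish that $\varphi(s) > 0$ for all $s > 0$: this is immediate from \cref{assump:4}, which gives precisely $\phi(t) > \beta\tau t$ for all $t>0$, so the integrand $1/\varphi(s)$ is well-defined and strictly positive on $(0,a]$. Consequently $\theta$ is well-defined as a (possibly improper) integral with a nonnegative value, and since $\frac{d}{dz}\theta(z) = -\frac{1}{\varphi(z)} < 0$ on $(0,a)$, the function $\theta$ is strictly decreasing — in particular nonincreasing — wherever it is finite.

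The substantive part is the claim $\lim_{z\to 0^+}\theta(z) = \infty$, i.e. that the integral $\int_0^a \frac{ds}{\varphi(s)}$ diverges. The key observation is that $\varphi(0^+) = 0$: indeed $\phi(0) = 0$ by the definition of modulus of uniform monotonicity (\cref{def:op_properties}), and $\phi$ is increasing, so $\phi(s) \to 0$ as $s \to 0^+$ (using, say, monotone convergence of $\phi$ to its infimum $\phi(0^+) \ge 0$; combined with $\phi \ge 0$ and $\phi(0)=0$ one gets $\phi(0^+)=0$), hence $\varphi(s) = \phi(s) - \beta\tau s \to 0$. To conclude divergence I would bound $\varphi$ from above near $0$ by a linear function: since $\phi$ is increasing, for $s \le a$ we have $\varphi(s) = \phi(s) - \beta\tau s \le \phi(s) \le \phi(a)$, which only gives a bounded integrand and does not suffice. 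A cleaner route: note that $\varphi$ is itself the modulus-type function and one may assume (or derive from $\phi$ increasing) that $\varphi$ is nondecreasing on a neighborhood of $0$, so for $s \in (0,a]$, $\varphi(s) \le \varphi(a)$ — still not enough. The genuinely needed ingredient is an upper bound of the form $\varphi(s) \le Cs$ for small $s$; this would follow if $\phi$ is, e.g., Lipschitz near $0$, but in general $\phi$ need only be increasing.

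I therefore expect the intended argument to use only what is literally available: $\varphi$ increasing (inherited appropriately from $\phi$) together with $\varphi(0^+)=0$. Then for any $\delta \in (0,a)$ one writes $\theta(z) = \int_z^\delta \frac{ds}{\varphi(s)} + \int_\delta^a \frac{ds}{\varphi(s)} \ge \int_z^\delta \frac{ds}{\varphi(\delta)} = \frac{\delta - z}{\varphi(\delta)}$, which is bounded and again insufficient — so divergence cannot come from monotonicity alone. The resolution must be that one additionally knows $\varphi(s) \le \phi(s)$ and that $\int_0 \frac{ds}{\phi(s)} = \infty$, which is the standard hypothesis hidden in such "modulus" arguments: moduli of uniform monotonicity (or the $\varphi$ built from them) are conventionally taken so that $\int_{0^+} ds/\varphi(s) = \infty$, mirroring the role of Osgood-type conditions in uniqueness theory. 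Hence the proof I would write: (i) $\varphi > 0$ on $(0,a]$ by \cref{assump:4}; (ii) $\theta$ well-defined and strictly decreasing since $\theta' = -1/\varphi < 0$; (iii) $\lim_{z\to 0^+}\theta(z) = \int_0^a ds/\varphi(s) = +\infty$ because $\varphi(s) = \phi(s) - \beta\tau s$ vanishes linearly-or-faster at $0$ while staying positive, so $1/\varphi$ is non-integrable at $0$ (invoking the Osgood/Bihari-type condition that is implicit in the uniform-monotonicity setup).

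The main obstacle is precisely isolating the exact regularity hypothesis on $\phi$ (or $\varphi$) near $0$ that forces non-integrability of $1/\varphi$: with only "$\phi$ increasing, $\phi(0)=0$" one cannot conclude, so the proof must either cite the standing convention on moduli, or add the mild assumption that $\varphi(s)/s$ is bounded near $0$ (equivalently $\phi$ admits a one-sided derivative bound at $0$), under which $1/\varphi(s) \ge c/s$ gives $\int_z^a ds/\varphi(s) \ge c\log(a/z) \to \infty$. I would present step (iii) via this last inequality, remarking that the bound $\varphi(s) \le Cs$ near $0$ holds under the natural assumption that $F_{\mm_{\bar x}}$ has a subpotential/Lipschitz-type modulus, and is automatic in the strongly monotone case $\phi(t)=\mu t$ where $\varphi(t) = (\mu - \beta\tau)t$ and $\theta(z) = \frac{1}{\mu-\beta\tau}\log(a/z)$ exhibits exactly the asserted behaviour.
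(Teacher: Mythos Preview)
Your proposal does land on the paper's actual argument, but buried mid-exploration rather than as the final answer. The paper's proof is exactly the two-line version you flagged as ``the resolution'': first $\theta'(z) = -1/\varphi(z) < 0$ by \cref{assump:4}, giving monotonicity; second, since $\varphi(t) = \phi(t) - \beta\tau t \le \phi(t)$, one has $\int_z^a \frac{\dd s}{\varphi(s)} \ge \int_z^a \frac{\dd s}{\phi(s)}$, and the paper asserts $\lim_{z\to 0^+}\int_z^a \frac{\dd s}{\phi(s)} = \infty$ to conclude. Your instinct that this last divergence is an Osgood-type hypothesis not derivable from the bare definition of a modulus (increasing, $\phi(0)=0$, $\phi(t)\to\infty$) is correct --- e.g.\ $\phi(s)=\sqrt{s}$ is admissible as a modulus yet $\int_0^a s^{-1/2}\,\dd s < \infty$ --- and the paper's subsequent remark confirms this is being treated as a standing (if implicit) Osgood condition on $\phi$ rather than proved.

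Where your proposal diverges is in the final paragraph: you opt to present step (iii) via the linear bound $\varphi(s)\le Cs$ near $0$. That route is strictly less general than the paper's comparison argument: it requires $\phi$ to have a bounded difference quotient at $0$, which excludes legitimate Osgood moduli like $\phi(s)=s\log(1/s)$ that the paper explicitly wants to cover (see the remark after the lemma). Stick with the comparison $\varphi\le\phi$ plus the Osgood hypothesis on $\phi$; your alternative works only in the strongly-monotone and near-Lipschitz cases you already singled out.
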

	\begin{proof}
		Indeed, we have $\dot{\theta}(z) = -1/\varphi(z) <0 $ since $\phi$ satisfies \cref{assump:4}. Moreover, since $\phi(t) \geq \varphi(t)$, and  $\lim_{z\to 0^{+}}\int_{z}^{a}\frac{\dd s}{\phi(s)} = \infty$, the result follows.	
	\end{proof}
	\begin{remark}
		In the literature of ordinary differential equations, the above lemma is related to the fact that $\varphi$ is somehow an \textit{Osgood modulus of continuity} (see, \eg \cite[Definitions 2.108 and 3.1]{Bahouri&al}). If $\varphi(s) = s$, which corresponds  to Lipschitz regularity, and $a=1$ then $\theta(z) = \log_{+}(z) = \max\{0,\log(1/z)\}$. If $a = 1/e$ and $\varphi(s) = s\log(1/s)$, which corresponds  to log-Lipschitz regularity,  then $\theta(z) = \log\log_{+}(z)$. More generally, $\varphi(s) = s\left(\log(1/s)\right)^{r}$ for $r\leq 1$ are admissible choices.
	\end{remark}

Now let us define the following gap
\begin{equation}\label{eq:e_x}
	\e_{\bar{x}}(x) =  B_{\mm_x}(x) - B_{\mm_{\bar{x}}}(x).
\end{equation}
Using the notation in \eqref{eq:f_mu}, we may rewrite \eqref{eq:eq0} in the following form
\begin{equation}\tag{p-SMI}\label{eq:eq2}
	\left\lbrace
	\begin{aligned}
		\dot{x}(t) &+ F_{\mm_{\bar{x}}}(x(t)) + \e_{\bar{x}}(x(t))\ni 0,~\text{a.e}~t > t_0\\
		x(t_0) &= x_0.
	\end{aligned}
	\right.
\end{equation}
One advantage of this formulation is that the mapping $x\mapsto \e_{\bar{x}}(x)$ exhibits Lipschitz behaviour (see \cref{lemma:3}), and now only the operator $F_{\mm_{\bar{x}}}$ appears instead of $F_{\mm_{x(t)}}$. This allows us to treat \eqref{eq:eq2} within the framework of evolution equations  governed by Lipschitz perturbations of maximal monotone operators (\cf \cite[Chapter III]{brezismaxmon}). This is behind the notation \eqref{eq:eq2}, which stands for perturbed stochastic monotone inclusion.  This being said, our aim is to  use \cite[Proposition 3.13]{brezismaxmon} and show the existence of a unique strong solution (\cf \cref{def:strong-solution}) to \eqref{eq:eq2} (see also \cite[Definition 3.1]{brezismaxmon}). %To this end we begin with the following lemmas.

 We start with the following properties.
\begin{lemma}\label{lemma:1}
	Under \cref{assump:1}, we have, for any $\mm,\nu\in\Pro$ and $x \in \cH$
	\[
	\sup_{x \in \cH}\Vert B_{\mm}(x)  - B_{\nu}(x)\Vert \leq \beta\Wass_1(\mm,\nu).
	\]
\end{lemma}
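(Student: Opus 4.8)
The plan is to pass to the Kantorovich--Rubinstein dual form of $\Wass_1$ recorded in \eqref{eq:Wasserstein1} and to combine it with the Hilbert-space duality $\Vert w\Vert=\sup_{\Vert v\Vert\le1}\langle v,w\rangle$.

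First I would fix a metric $d$ on $\Xi$ compatible with its Polish topology --- the one implicit in the definitions of $\Lip_1$ and $\Wass_1$ --- and record that by \cref{assump:1} each map $\xi\mapsto B(x,\xi)$ is $\beta$-Lipschitz from $(\Xi,d)$ into $\cH$ with the \emph{same} constant $\beta$; hence its range is separable, so it is strongly measurable, and it has at most linear growth, so it is Bochner integrable against any $\mm\in\Probm$ and $B_\mm(x)=\Ex_{\xi\sim\mm}(B(x,\xi))$ is a well-defined element of $\cH$. (This integrability is implicit in the statement; if $\Wass_1(\mm,\nu)=+\infty$ the asserted bound is vacuous, so there is no loss in assuming it finite.)

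Next, for fixed $x\in\cH$ and any $v\in\cH$ with $\Vert v\Vert\le1$, I would introduce the scalar test function $h_v(\xi):=\beta^{-1}\langle v,B(x,\xi)\rangle$. Cauchy--Schwarz together with the $\beta$-Lipschitz continuity of $B(x,\cdot)$ gives $|h_v(\xi)-h_v(\zeta)|\le\beta^{-1}\Vert v\Vert\,\Vert B(x,\xi)-B(x,\zeta)\Vert\le d(\xi,\zeta)$, so $h_v\in\Lip_1$. Because $\langle v,\cdot\rangle$ is a bounded linear functional it commutes with the Bochner integral, and therefore
\[
\langle v,\,B_\mm(x)-B_\nu(x)\rangle=\beta\Big(\Ex_{\xi\sim\mm}h_v(\xi)-\Ex_{\zeta\sim\nu}h_v(\zeta)\Big)\le\beta\,\Wass_1(\mm,\nu),
\]
the inequality being \eqref{eq:Wasserstein1}. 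Taking the supremum over $\Vert v\Vert\le1$ yields $\Vert B_\mm(x)-B_\nu(x)\Vert\le\beta\,\Wass_1(\mm,\nu)$; since the right-hand side does not involve $x$, taking $\sup_{x\in\cH}$ on the left finishes the argument.

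The argument is short, and the only step needing genuine care is the well-posedness of the Bochner integrals $B_\mm(x),B_\nu(x)$ --- strong measurability from separability of $\Xi$, Bochner integrability from the linear growth of $B(x,\cdot)$ --- after which the duality manipulation above is immediate. I would also point out that the estimate is tight in $\beta$, as the family of test functions $h_v$ already shows.
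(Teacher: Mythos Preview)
Your proof is correct and follows essentially the same route as the paper: dualize the Hilbert norm via $\sup_{\Vert v\Vert\le1}\langle v,\cdot\rangle$, observe that each scalar test function $\xi\mapsto\langle v,B(x,\xi)\rangle$ is $\beta$-Lipschitz, and invoke the Kantorovich--Rubinstein form \eqref{eq:Wasserstein1}. Your additional remarks on Bochner integrability and the explicit normalization $h_v=\beta^{-1}\langle v,B(x,\cdot)\rangle\in\Lip_1$ simply make precise what the paper leaves implicit.
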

\begin{proof}
	Let $v \in \cH$ be a unit norm vector. By \cref{assump:1}, $\langle v,B(x,\cdot) \rangle$ is $\beta$-Lipschitz continuous for every $x \in \cH$. We then have from the definition of the $\Wass_{1}$-Wasserstein distance \eqref{eq:Wasserstein1} 
	\begin{align*}
	\|B_{\mm}(x) - B_{\nu}(x)\| 
	&= \sup_{v \in \cH,\|v\|=1}\langle v,B_{\mm}(x) - B_{\nu}(x)\rangle \\
	&= \sup_{v \in \cH,\|v\|=1}\Ex_{\xi\sim \mm} \langle v,B(x,\xi)\rangle - \Ex_{\zeta\sim \nu} \langle v,B(x,\zeta)\rangle \leq \beta \Wass_1(\mm,\nu).
	\end{align*}
	
	Taking the supremum over $x\in\cH$ yields the result.
\end{proof}
Combining \cref{lemma:1} and \cref{assump:2} we get the following.
\begin{corollary}\label{corollary:1}
	Under \cref{assump:1} and \cref{assump:2}, for all $y,z\in \cH$
	\[
	\sup_{x \in \cH}\Vert B_{\mm_{y}}(x)  - B_{\mm_{z}}(x)\Vert \leq \beta\tau\Vert y-z\Vert.
	\]
\end{corollary}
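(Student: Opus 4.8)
This statement is a direct composition of the two facts just established, so the plan is simply to chain them. First I would fix $y,z\in\cH$ and apply \cref{lemma:1} with the particular choice $\mm = \mm_y$ and $\nu = \mm_z$ (both are elements of $\Pro$ by hypothesis), which yields
\[
\sup_{x \in \cH}\Vert B_{\mm_{y}}(x)  - B_{\mm_{z}}(x)\Vert \leq \beta\,\Wass_1(\mm_y,\mm_z).
\]
Then I would invoke \cref{assump:2}, which bounds $\Wass_1(\mm_y,\mm_z) \leq \tau\Vert y - z\Vert$. Substituting this into the previous display gives
\[
\sup_{x \in \cH}\Vert B_{\mm_{y}}(x)  - B_{\mm_{z}}(x)\Vert \leq \beta\tau\Vert y - z\Vert,
\]
which is the claimed inequality, and since $y,z$ were arbitrary the proof is complete.

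There is essentially no obstacle here: the work is entirely absorbed into \cref{lemma:1} (which transfers the $\beta$-Lipschitz dependence of $\xi \mapsto B(x,\xi)$ through the dual/Kantorovich–Rubinstein formulation of $\Wass_1$) and into \cref{assump:2} (the Lipschitz sensitivity of the law $\mm_{(\cdot)}$ in the $\Wass_1$ metric). The only point worth a moment's care is that the bound from \cref{lemma:1} is uniform in $x\in\cH$ — this is exactly why the supremum over $x$ survives on the left-hand side — so no additional argument is needed to pass the supremum through. Consequently the corollary is immediate.
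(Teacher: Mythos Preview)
Your proof is correct and matches the paper's approach exactly: the paper itself simply states that the corollary follows by combining \cref{lemma:1} with \cref{assump:2}, which is precisely the two-step chain you wrote out.
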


\begin{assumption}[Lipschitz continuity of $B_{\mm_{x}}$]\label{assump:5}
$\exists L > 0$ such that $B_{\mm_x}$ is $L$-Lipschitz continuous for every $x \in \cH$. 
\end{assumption}
This assumption is true if for instance $B(\cdot,\xi)$ is $L$-Lipschitz continuous for $\mm_x$-almost every $\xi \in \Xi$. Indeed
\[
B_{\mm_x}(z) - B_{\mm_x}(y) = \Ex_{\xi\sim \mm_x}\left(B(z,\xi) - B(y,\xi)\right) \leq L \Vert z-y\Vert.
\]

We are now in position to characterize the properties of $F_{\mm_{\bar{x}}}$.
\begin{lemma}\label{lemma:0}
Assume that \cref{assump:1,assump:5} hold and that $B_{\mm_{\bar{x}}}$ is monotone. Then the operator $F_{\mm_{\bar{x}}}$ is maximally monotone. If, in addition, \cref{assump:3} (resp. \cref{assump:4}) then $F_{\mm_{\bar{x}}}$ is maximally uniformly (resp. strongly) monotone.
\end{lemma}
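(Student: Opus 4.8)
The plan is to assemble this statement from standard facts in the theory of maximal monotone operators, with essentially no new work required. First I would observe that $B_{\mm_{\bar{x}}}$ is itself maximally monotone: it is monotone by hypothesis, and by \cref{assump:5} it is $L$-Lipschitz continuous on all of $\cH$, hence single-valued, everywhere defined and continuous. A monotone operator with these properties is maximally monotone (see, \eg \cite[Corollary~20.28]{Baucshke&Combettes}, where hemicontinuity already suffices), so $\dom(B_{\mm_{\bar{x}}}) = \cH$ and $B_{\mm_{\bar{x}}}$ is maximally monotone.

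Next, since $A$ is maximally monotone by \cref{assump:1} and $B_{\mm_{\bar{x}}}$ is maximally monotone with $\dom(B_{\mm_{\bar{x}}}) = \cH$, the constraint qualification required by Rockafellar's sum theorem holds trivially, namely $\dom(A)\cap\inter(\dom(B_{\mm_{\bar{x}}})) = \dom(A)\neq\emptyset$. Hence $F_{\mm_{\bar{x}}} = A + B_{\mm_{\bar{x}}}$ is maximally monotone (see, \eg \cite[Corollary~25.5]{Baucshke&Combettes}). I would point out explicitly that the hypothesis $\inter(\dom(A))\neq\emptyset$ from \cref{assump:1} is not actually needed at this point; it will be used later for well-posedness.

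Finally, for the last assertion, specializing \cref{assump:3} (resp.\ \cref{assump:4}) to the choice $x = \bar{x}$ gives that $F_{\mm_{\bar{x}}}$ is $\mu$-strongly monotone (resp.\ uniformly monotone with modulus $\phi$); combined with the maximal monotonicity just established, and using that any strongly or uniformly monotone extension is in particular a monotone extension — of which a maximal monotone operator admits none — this yields that $F_{\mm_{\bar{x}}}$ is maximally strongly (resp.\ uniformly) monotone, recalling moreover that strong monotonicity is the special case $\phi(t)=\mu t$ of uniform monotonicity. I do not anticipate a genuine obstacle here: the only step warranting a line of care is the invocation of the sum theorem, where one must note that the full domain of $B_{\mm_{\bar{x}}}$ makes the domain qualification automatic.
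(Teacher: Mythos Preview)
Your proposal is correct and follows essentially the same approach as the paper: the paper also invokes \cite[Corollary~20.28]{Baucshke&Combettes} for the maximal monotonicity of $B_{\mm_{\bar{x}}}$, then appeals to a sum result (the paper cites \cite[Lemma~2.4]{brezismaxmon} rather than Rockafellar's theorem, but this is the same mechanism since $B_{\mm_{\bar{x}}}$ is Lipschitz with full domain), and declares the last statement immediate. Your write-up is in fact more detailed than the paper's one-line proof, and your side remark that $\inter(\dom(A))\neq\emptyset$ is not needed here is a correct and useful observation.
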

\begin{proof}
%	Since $A$ is maximal monotone, and $B(.,\xi)$ is $\beta$-Lipschitz continuous for every $\xi$, the result is a direct consequence of \cite[Lemma 2.4]{brezismaxmon}.
Maximal monotonicity of $B_{\mm_{\bar{x}}}$ follows from \cite[Corollary~20.28]{Baucshke&Combettes}. Combine this with maximal monotonicity of $A$ stated in \cref{assump:1}, and \cite[Lemma~2.4]{brezismaxmon} yields the claim. The proof of the last statement is immediate.
\end{proof}

The key ingredient to use \cite[Proposition~3.13]{brezismaxmon} is the Lipschitz continuity of the perturbation $\e_{\bar{x}}(.)$. This is the content of the following statement.
\begin{lemma}\label{lemma:3}
	Suppose that \cref{assump:1,assump:2,assump:5} hold, then $\e_{\bar{x}}(.)$ is $(2L+\beta\tau)$-Lipschitz continuous.
\end{lemma}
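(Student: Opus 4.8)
The plan is to estimate $\e_{\bar x}(z) - \e_{\bar x}(y)$ directly from its definition \eqref{eq:e_x}, namely $\e_{\bar x}(x) = B_{\mm_x}(x) - B_{\mm_{\bar x}}(x)$. Writing out the difference,
\[
\e_{\bar x}(z) - \e_{\bar x}(y) = \bigl(B_{\mm_z}(z) - B_{\mm_{\bar x}}(z)\bigr) - \bigl(B_{\mm_y}(y) - B_{\mm_{\bar x}}(y)\bigr),
\]
I would split this into two groups: the ``frozen measure'' part $B_{\mm_{\bar x}}(z) - B_{\mm_{\bar x}}(y)$, which is controlled by the $L$-Lipschitz continuity of $B_{\mm_{\bar x}}$ from \cref{assump:5}, and the part $B_{\mm_z}(z) - B_{\mm_y}(y)$, which mixes a change in the evaluation point and a change in the measure.

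For the second part, the natural move is to insert an intermediate term, e.g.\
\[
B_{\mm_z}(z) - B_{\mm_y}(y) = \bigl(B_{\mm_z}(z) - B_{\mm_z}(y)\bigr) + \bigl(B_{\mm_z}(y) - B_{\mm_y}(y)\bigr).
\]
The first bracket is bounded by $L\Vert z - y\Vert$ using \cref{assump:5} applied to $B_{\mm_z}$, and the second bracket is bounded by $\beta\tau\Vert z - y\Vert$ using \cref{corollary:1} (which combines \cref{assump:1} and \cref{assump:2}). Collecting the three contributions — $L$ from $B_{\mm_{\bar x}}$, $L$ from $B_{\mm_z}$, and $\beta\tau$ from the measure shift — and using the triangle inequality yields
\[
\Vert \e_{\bar x}(z) - \e_{\bar x}(y)\Vert \leq (2L + \beta\tau)\Vert z - y\Vert,
\]
which is exactly the claimed constant. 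I would carry out the steps in this order: write the difference, regroup into the frozen-measure term and the two-effects term, split the latter via the intermediate point, apply \cref{assump:5} twice and \cref{corollary:1} once, then sum.

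There is no real obstacle here — the proof is a short triangle-inequality argument — but the one point that needs care is the bookkeeping of which Lipschitz estimate applies to which bracket: one must make sure that when bounding $B_{\mm_z}(z) - B_{\mm_z}(y)$ one invokes \cref{assump:5} with the measure $\mm_z$ (so the constant is $L$ uniformly in $z$), and that when bounding $B_{\mm_z}(y) - B_{\mm_y}(y)$ one invokes \cref{corollary:1} with the supremum over the evaluation point already taken. An alternative grouping (splitting $B_{\mm_z}(z) - B_{\mm_y}(y)$ the other way, via $B_{\mm_y}(z)$) works equally well and gives the same constant; either route is fine.
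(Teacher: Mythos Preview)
Your proposal is correct and is essentially identical to the paper's proof: the paper writes $\e_{\bar{x}}(x) - \e_{\bar{x}}(z)$ as the sum $(B_{\mm_{x}}(x) - B_{\mm_{x}}(z)) + (B_{\mm_{\bar{x}}}(z) - B_{\mm_{\bar{x}}}(x)) + (B_{\mm_{x}}(z) - B_{\mm_{z}}(z))$ and bounds the three brackets exactly as you do, invoking \cref{assump:5} twice and \cref{corollary:1} once. Up to variable names and the order in which the three terms are listed, your argument and the paper's coincide.
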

\begin{proof}
	Let $x,z\in\cH$. We then have
	\begin{equation}
		\begin{aligned}
			\Vert\e_{\bar{x}}(x) - \e_{\bar{x}}(z)\Vert &= \Vert\left(B_{\mm_{x}}(x) - B_{\mm_{\bar{x}}}(x)\right) -  \left(B_{\mm_{z}}(z) - B_{\mm_{\bar{x}}}(z)\right)\Vert \\
			&= \Vert\left(B_{\mm_{x}}(x) - B_{\mm_{x}}(z) \right) + \left(B_{\mm_{\bar{x}}}(z)- B_{\mm_{\bar{x}}}(x)\right) + \left(B_{\mm_{x}}(z) - B_{\mm_{z}}(z)\right)\Vert\\
					&\leq L \Vert x-z\Vert + L \Vert x-z\Vert + \beta \tau\Vert x-z\Vert\\
			&=   (2L+\beta\tau)\Vert x-z\Vert,
		\end{aligned}
	\end{equation}
	where we have used \cref{assump:5} twice and \cref{corollary:1} once in the inequality.
\end{proof}
	\begin{remark}
		Since $\zer (F_{\mm_{\bar{x}}})  = \{\bar{x}\}$, we clearly see that  $\bar{x}\in \zer (F_{\mm_{\bar{x}}} + \e_{\bar{x}})$. Indeed, taking into account \eqref{eq:e_x}, we have $\e_{\bar{x}}(\bar{x}) = 0$ so that $0\in \left(F_{\mm_{\bar{x}}}+\e_{\bar{x}}\right)(\bar{x})$.
	\end{remark}
	\begin{proposition}\label{prop:1st_order_wp}
		Suppose that \cref{assump:1,assump:2,assump:5} hold, that either \cref{assump:3} or \cref{assump:4} hold and let $T > t_0$. Then, given $x_0\in \overline{\dom(F_{\mm_{\bar{x}}})}$, the dynamic \eqref{eq:eq2}, and hence \eqref{eq:eq1}, admits a unique strong solution $x$ in the sense of \cref{def:strong-solution}.
	\end{proposition}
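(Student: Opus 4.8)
The plan is to read \eqref{eq:eq2} as a globally Lipschitz perturbation of an evolution equation governed by a maximal monotone operator, and then to invoke the classical well-posedness result \cite[Proposition~3.13]{brezismaxmon}.

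First, by \cref{thm:equilibrium} (applicable under \cref{assump:4}, and, in the strongly monotone case \cref{assump:3}, for $\rho<1$), there is a unique equilibrium $\bar{x}\in\zer(F_{\mm_{\bar{x}}})$, so that $F_{\mm_{\bar{x}}}$, the gap $\e_{\bar{x}}$, and hence the system \eqref{eq:eq2}, are unambiguously defined. Next I would record the pointwise identity, valid for every $x\in\cH$,
\[
F_{\mm_{\bar{x}}}(x) + \e_{\bar{x}}(x) = A(x) + B_{\mm_{\bar{x}}}(x) + \bigl(B_{\mm_x}(x) - B_{\mm_{\bar{x}}}(x)\bigr) = A(x) + B_{\mm_x}(x) = F_{\mm_x}(x),
\]
so that a trajectory is a strong solution of \eqref{eq:eq2} if and only if it is a strong solution of \eqref{eq:eq1} (equivalently \eqref{eq:eq0}); it therefore suffices to treat \eqref{eq:eq2}.

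The main step is then to fix $T>t_0$, write \eqref{eq:eq2} in the form $\dot{x}(t)+F_{\mm_{\bar{x}}}(x(t))+D(t,x(t))\ni 0$ with the autonomous perturbation $D(t,y):=\e_{\bar{x}}(y)$, and check the hypotheses of \cite[Proposition~3.13]{brezismaxmon}: (i) $F_{\mm_{\bar{x}}}$ is maximal monotone by \cref{lemma:0} (and maximal strongly, resp.\ uniformly, monotone under \cref{assump:3}, resp.\ \cref{assump:4}); (ii) since $B_{\mm_{\bar{x}}}$ is everywhere defined, $\overline{\dom(F_{\mm_{\bar{x}}})}=\overline{\dom(A)}$, so the initial datum $x_0$ is admissible; (iii) by \cref{lemma:3}, $D(t,\cdot)=\e_{\bar{x}}(\cdot)$ is $(2L+\beta\tau)$-Lipschitz continuous with constant independent of $t$, while $t\mapsto D(t,y)$ is constant, hence measurable and locally integrable. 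Granting (i)--(iii), \cite[Proposition~3.13]{brezismaxmon} provides a unique strong solution $x:[t_0,T]\to\cH$ in the sense of \cref{def:strong-solution}; since $T>t_0$ is arbitrary and, by uniqueness, solutions on nested intervals coincide on the smaller one, these patch into a unique strong global solution of \eqref{eq:eq2}, and hence of \eqref{eq:eq1}. (Should one prefer to avoid $\bar{x}$, the same conclusion for \eqref{eq:eq1} follows by regrouping it as $\dot{x}+A(x)+D(t,x)\ni 0$ with $D(t,y):=B_{\mm_y}(y)$, which is $(L+\beta\tau)$-Lipschitz in $y$ by \cref{assump:5} and \cref{corollary:1}, so that $A$ itself plays the role of the maximal monotone part.)

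I expect the only point requiring genuine care, as opposed to bookkeeping, to be the precise invocation of \cite[Proposition~3.13]{brezismaxmon}: that a merely globally Lipschitz --- not contractive --- perturbation of a maximal monotone operator still yields a solution defined on the whole interval, together with its uniqueness. In the cited reference this is obtained through a Banach fixed-point argument on $C([t_0,T];\cH)$ built from the resolvent $(\Id+\lambda F_{\mm_{\bar{x}}})^{-1}$ and a Gronwall comparison of two solutions; crucially, the Lipschitz modulus $2L+\beta\tau$ is a fixed global constant, which is exactly what prevents finite-time breakdown (so no continuation argument is needed) and also delivers uniqueness. The remaining verifications --- the domain identity in (ii) and the equivalence of the two formulations --- are routine.
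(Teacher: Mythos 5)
Your proposal is correct and follows essentially the same route as the paper's proof: establish maximal monotonicity of $F_{\mm_{\bar{x}}}$ via \cref{lemma:0}, Lipschitz continuity of the perturbation $\e_{\bar{x}}$ via \cref{lemma:3}, and then invoke \cite[Proposition~3.13]{brezismaxmon}. The extra bookkeeping you supply (the pointwise identity equating \eqref{eq:eq2} with \eqref{eq:eq1}, the domain identity, and the patching over arbitrary $T$) is left implicit in the paper but is consistent with its argument.
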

	\begin{proof}
		Thanks to \cref{lemma:0} and \cref{lemma:3} $F_{\mm_{\bar{x}}}$ is maximally monotone and $\e_{\bar{x}}$ is Lipschitz continuous. Since $t\mapsto \e_{\bar{x}}(x)$ is trivially  in $L^{\infty}(t_0,T;\cH)$ for any $x \in \cH$, we deduce, thanks to \cite[Proposition~3.13]{brezismaxmon} the existence of a unique solution $x\in W^{1,1}(t_0,T;\cH)$ to \eqref{eq:eq1}.
	\end{proof}
	
\begin{remark}
Evolution problems of the form \eqref{eq:eq2} were also studied in \cite{Attouch&Damlamian}, where the operator $\e_{\bar{x}}$ is potentially multivalued and depends on both time and space variables. While standard time-variable measurability is assumed, the authors' analysis relied on the upper semicontinuity of $x\mapsto \e_{\bar{x}}(t,x)$, rather than Lipschitz continuity, for which weaker results were proved. We also observe that the domain condition in \cref{assump:1} can be removed in finite dimension (see \eg \cite[Theorem~1.2]{Attouch&Damlamian}) or when $A$ is the subdifferential of a function in $\Gamma_0(\cH)$.
\end{remark}

\subsection{Convergence properties}\label{subsec:firstorderconv}
We now establish the main convergence result of the unique solution trajectory of \eqref{eq:eq1}.
	\begin{theorem}\label{thm:1st_order_convergence}
		Let $x$ be the unique solution trajectory  of \eqref{eq:eq1} under \cref{assump:1,assump:2,assump:5}, and suppose that \cref{assump:4} also holds. Then, for all $t\geq t_0$, we have
		\begin{equation}\label{eq:speed1}
			\Vert x(t) - \bar{x}\Vert \leq \theta^{-1}\Big(2t - \hat{t}\Big),
		\end{equation}
		where $\theta$ is defined in \cref{lemma:M} and $\hat{t} = 2t_0 - \theta\left(\Vert x_0 - \bar{x}\Vert\right)$.
		
		Furthermore, if \cref{assump:3} holds instead and $\rho:=\frac{\beta\tau}{\mu} < 1$, we have
		\begin{equation}\label{eq:speed2}
			\Vert x(t) - \bar{x}\Vert \leq  C e^{-2\mu(1 - \rho)t}~~\mbox{for all}~t\geq t_0,
		\end{equation}
		with $C = \Vert x_0 - \bar{x}\Vert e^{2\mu(1 - \rho)t_0}$.
	\end{theorem}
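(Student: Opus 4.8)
The plan is a Lyapunov argument on the squared distance to the equilibrium, followed by integration of the resulting scalar differential inequality. Work with the equivalent formulation \eqref{eq:eq2} and with the strong solution $x$ produced by \cref{prop:1st_order_wp}, and set $r(t) := \Vert x(t) - \bar x\Vert$. Two facts about $\bar x$ are used: since $\e_{\bar x}(\bar x) = 0$ (see \eqref{eq:e_x}) and $\bar x \in \zer(F_{\mm_{\bar x}} + \e_{\bar x})$ we have $0 \in F_{\mm_{\bar x}}(\bar x)$; and, from the inclusion in \eqref{eq:eq2}, for a.e. $t > t_0$ the vector $\xi(t) := -\dot x(t) - \e_{\bar x}(x(t))$ lies in $F_{\mm_{\bar x}}(x(t))$. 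Because $x$ is absolutely continuous on compact subsets of $]t_0,\infty[$, so is $t \mapsto \tfrac12 r(t)^2$, and for a.e. $t > t_0$
\[
\tfrac12 \tfrac{\dd}{\dd t} r(t)^2 = \langle \dot x(t),\, x(t) - \bar x\rangle = -\langle \xi(t) - 0,\, x(t) - \bar x\rangle - \langle \e_{\bar x}(x(t)),\, x(t) - \bar x\rangle .
\]

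Next I would bound the two inner products. For the first, apply uniform monotonicity of $F_{\mm_{\bar x}}$ (\cref{assump:4}) to the pairs $[x(t),\xi(t)],[\bar x,0] \in \gra F_{\mm_{\bar x}}$, giving $\langle \xi(t) - 0, x(t) - \bar x\rangle \ge r(t)\,\phi(r(t))$. For the second, \cref{corollary:1} with $y = x(t)$, $z = \bar x$ gives $\Vert \e_{\bar x}(x(t))\Vert = \Vert B_{\mm_{x(t)}}(x(t)) - B_{\mm_{\bar x}}(x(t))\Vert \le \beta\tau\, r(t)$, whence $\langle \e_{\bar x}(x(t)), x(t) - \bar x\rangle \ge -\beta\tau\, r(t)^2$ by Cauchy--Schwarz. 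Recalling $\varphi(t) = \phi(t) - \beta\tau t$ from \eqref{eq:phi}, this yields
\[
\tfrac12 \tfrac{\dd}{\dd t} r(t)^2 \le -\,r(t)\big(\phi(r(t)) - \beta\tau\, r(t)\big) = -\,r(t)\,\varphi(r(t)) \qquad \text{for a.e. } t > t_0,
\]
so in particular $r$ is nonincreasing and, at points where $r(t) > 0$, $\dot r(t) \le -\varphi(r(t))$.

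It then remains to integrate. Under \cref{assump:4}: if $r$ vanishes at some $t_\ast \ge t_0$ then, being nonnegative and nonincreasing, $r \equiv 0$ on $[t_\ast,\infty[$ and \eqref{eq:speed1} is trivial there, so we may assume $r > 0$ on $[t_0,t]$; using that $\dot\theta = -1/\varphi$ (see \cref{lemma:M}), the chain rule gives $\tfrac{\dd}{\dd s}\theta(r(s)) = -\dot r(s)/\varphi(r(s)) \ge 1$ for a.e. $s$, and integrating from $t_0$ to $t$ and inverting the strictly decreasing map $\theta$ produces the bound \eqref{eq:speed1} in terms of $\theta^{-1}$ and $\hat t$. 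Under \cref{assump:3} with $\rho < 1$: one may take $\phi(t) = \mu t$, so $\varphi(t) = \mu(1-\rho)t > 0$, the displayed inequality becomes $\tfrac{\dd}{\dd t} r(t)^2 \le -2\mu(1-\rho)\, r(t)^2$, and Grönwall's lemma gives the exponential decay \eqref{eq:speed2} with the stated constant $C$.

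The only delicate points are in the first two steps: legitimately differentiating $\Vert x(t) - \bar x\Vert^2$ along a merely absolutely continuous strong solution and identifying the measurable selection $\xi(t) \in F_{\mm_{\bar x}}(x(t))$, and then arranging that the perturbation contributes exactly $-\beta\tau\, r(t)^2$ so that it absorbs into $\varphi$ — this is precisely why \cref{assump:4} is stated with the strict inequality $\phi(t) > \beta\tau t$ (equivalently $\varphi > 0$ on $]0,\infty[$) and why $\rho<1$ is imposed in the strongly monotone case. Once $\dot r \le -\varphi(r)$ is in hand the rest is a routine ODE comparison, with \cref{lemma:M} packaging exactly the integrability of $1/\varphi$ and the blow-up $\theta(0^+)=\infty$ that force $r(t)\to 0$ at the claimed rate.
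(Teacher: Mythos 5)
Your argument follows the paper's proof essentially line for line: differentiate $\tfrac12\Vert x(t)-\bar x\Vert^2$ along the strong solution of \eqref{eq:eq2}, test the uniform monotonicity of $F_{\mm_{\bar x}}$ against the pair $[\bar x,0]\in\gra F_{\mm_{\bar x}}$, absorb the perturbation via \cref{corollary:1} into $\varphi=\phi-\beta\tau\,\id$, and integrate the resulting scalar inequality through $\theta$ (resp.\ Gr\"onwall). Your treatment of the case $r(t)=0$ via monotonicity of $r$ is in fact cleaner than the paper's pointwise dichotomy, and your sign bookkeeping is correct where the paper's display \eqref{prop_proof:eq1} has a dropped minus sign.

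There is, however, one concrete mismatch you should not gloss over: from $\tfrac12\tfrac{\dd}{\dd t}r^2=r\dot r\le -r\,\varphi(r)$ you correctly obtain $\dot r\le -\varphi(r)$, hence $\tfrac{\dd}{\dd s}\theta(r(s))\ge 1$ and, after integration, $r(t)\le\theta^{-1}\bigl((t-t_0)+\theta(r(t_0))\bigr)$; likewise Gr\"onwall applied to $\tfrac{\dd}{\dd t}r^2\le-2\mu(1-\rho)r^2$ gives $r(t)\le r(t_0)\,e^{-\mu(1-\rho)(t-t_0)}$ after taking square roots. Neither of these is the displayed bound: \eqref{eq:speed1} requires $\theta(r(t))\ge 2(t-t_0)+\theta(r(t_0))$ and \eqref{eq:speed2} requires decay at rate $2\mu(1-\rho)$ for $r(t)$ itself. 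The paper reaches those constants by asserting $\dot h\le -2\varphi(h)$ after ``dividing \eqref{eq:der} by $h$,'' which is an algebra slip (the correct quotient is $\dot h\le-\varphi(h)$, exactly what you derived). So your chain of inequalities is the sound one, but your final sentence ``produces the bound \eqref{eq:speed1}'' is not justified as written: what you actually prove is the statement with the rate halved. Either flag that the theorem's constants should read $t-t_0$ in place of $2t-\hat t$ and $\mu(1-\rho)$ in place of $2\mu(1-\rho)$, or supply the (missing, and in fact unavailable) factor of $2$.
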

	\begin{proof}
		 We observe thanks to \cref{prop:1st_order_wp} that
		\[
		\frac{1}{2}\frac{\dd}{\dd t}\Vert x(t) - \bar{x}\Vert^{2} = \langle \dot{x}(t),x(t)- \bar{x} \rangle.
		\]
		We then have, for any selection $u(t)$ of $F_{\mm_{\bar{x}}}(x(t))$
		\begin{align}
			\frac{1}{2}\frac{\dd}{\dd t}\Vert x(t) - y\Vert^{2} 
			&= \langle \dot{x}(t),x(t)- \bar{x} \rangle = - \langle u(t),x(t)-\bar{x}\rangle - \langle \e_{\bar{x}}(x(t)),x(t)-\bar{x}\rangle \nonumber\\
			&= \langle u(t) - 0 ,x(t) - \bar{x}\rangle - \langle \e_{\bar{x}}(x(t)),x(t)-\bar{x}\rangle . \label{prop_proof:eq1}
		\end{align}
		We then get by \cref{assump:4}
		\[
		\langle u(t) - 0 ,x(t) - \bar{x}\rangle \geq  \Vert x(t) - \bar{x}\Vert\phi\left( \Vert x(t) - \bar{x}\Vert\right).
		\]
		Then, thanks to \cref{corollary:1}, \eqref{prop_proof:eq1} gives
		\begin{equation}\label{eq:der}
			\begin{aligned}
				\frac{1}{2}\frac{\dd}{\dd t}\Vert x(t) - \bar{x}\Vert^{2} %& =  - \langle u(t) - 0 ,x(t) - \bar{x}\rangle - \langle \e_{\bar{x}}(x(t)),x(t)-\bar{x}\rangle\\
				&\leq - \Vert x(t) - \bar{x}\Vert\phi\left(\Vert x(t) - \bar{x}\Vert\right)  + \Vert \e_{\bar{x}}(x(t))\Vert \Vert x(t) - \bar{x}\Vert \\
				&\leq   - \Vert x(t) - \bar{x}\Vert\phi\left(\Vert x(t) - \bar{x}\Vert\right)  + \beta \tau  \Vert x(t) - \bar{x}\Vert^{2}\\
				&= - \Vert x(t) - \bar{x}\Vert \varphi\left(\Vert x(t) - \bar{x}\Vert\right),
			\end{aligned}
		\end{equation}
		where $\varphi(t) = \phi(t) - \beta\tau t$ is as introduced in \eqref{eq:phi}. When $h(t):=\Vert x(t) - \bar{x}\Vert = 0$, then there is nothing to prove and we are done. Otherwise, $h(t) \neq 0$. Thus dividing both sides of \eqref{eq:der} by $h(t)$, we get
		\begin{equation}\label{eq:der1}
			\dot{h}(t)\leq -2\varphi\left( h(t)\right).
		\end{equation}
		From \eqref{eq:der} and \eqref{eq:der1}, we infer that
		\begin{equation}\label{eq:equation_M}
			\frac{\dd}{\dd t} \theta\left(h(t)\right) = \frac{-1}{\varphi\left(h(t)\right)} \dot{h}(t) \geq 2 .
			%\left(\frac{1}{\varphi\left(h(t)\right)}\right)\Big(2 \varphi\left(h(t)\right)\Big) = 2.
		\end{equation}
		Integrating \eqref{eq:equation_M} between $t_0$ and $t>0$, we get
		\[
		\theta(h(t)) - \theta(h(t_0))\geq 2\left(t-t_0\right).
		\]
		Using \cref{lemma:M}, we deduce that
		\begin{equation}\label{eq:estimate1}
			h(t) \leq \theta^{-1}(2t - \hat{t}),
		\end{equation}
		where $\hat{t} = 2t_0 - \theta(h(t_0))$. This proves \eqref{eq:speed1}, as desired.
		
		To prove the second claim, we observe that if $F_{\mm_{\bar{x}}}$ is $\mu$-strongly monotone, $\phi(t) = \mu t$ so that $\varphi(t) = (\mu - \beta\tau)t$. Since $\mu > \beta\tau$, we get $\theta(s) = \frac{-\log(s)}{\mu - \beta\tau}$. So that $\theta^{-1}(s) = e^{-(\mu-\beta\tau)s}$. Plugging this into \eqref{eq:estimate1}, we obtain 
		\begin{equation}\label{eq:estimate2}
			h(t) \leq 	\Vert x_0 - \bar{x}\Vert e^{-2(\mu - \beta\tau)(t-t_0)},
		\end{equation}
		as desired.		
		\end{proof}
		\begin{remark}
		Observe that one can obtain \eqref{eq:speed2} from \eqref{eq:der} using Gronwall's \cref{lemma:gronwall1}. Indeed, taking $\phi(t) = \beta\mu t$, we infer from \eqref{eq:der}
		\[
		\frac{1}{2}\frac{\dd}{\dd t}\Vert x(t) - \bar{x}\Vert^{2} \leq -(\mu - \beta\tau)\Vert x(t) - \bar{x}\Vert^{2}.
		\]
	\end{remark}

		By \cref{assump:2} and \cref{thm:1st_order_convergence}, we obtain a rate of convergence of the measure $\mm_{x(t)}$ to $\mm_{\bar{x}}$ in the $\Wass_1$ distance as $t\to\infty$.
	\begin{corollary}\label{cor:W1_rate1}
		Let $x:[t_0,\infty)\to\R$ be the solution of \eqref{eq:eq1} under the assumptions of \cref{prop:1st_order_wp}, and suppose that \cref{assump:4} also holds. We have for all $t\geq t_0$
		\[
		\Wass_{1}(\mm_{x(t)},\mm_{\bar{x}}) \leq \tau  \theta^{-1}\Big(2t - \hat{t}\Big),
		\]
		where $\theta$ is defined in \cref{lemma:M} and $\hat{t} = 2t_0 - \theta\left(\Vert x_0 - \bar{x}\Vert\right)$. If \cref{assump:3} holds instead and $\rho:=\frac{\beta\tau}{\mu} < 1$, we have
			\[
			\Wass_{1}(\mm_{x(t)},\mm_{\bar{x}}) \leq C e^{-2\mu(1-\rho)t},
			\]
				with $C =  \tau\Vert x_0 - \bar{x}\Vert e^{2\mu(1 - \rho)t_0}.$
	\end{corollary}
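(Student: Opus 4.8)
This corollary follows immediately by composing the Lipschitz bound on the family of distributions (\cref{assump:2}) with the trajectory convergence rates already established in \cref{thm:1st_order_convergence}. There is no real obstacle here; the only thing to check is that the hypotheses line up.

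First I would invoke \cref{assump:2} with the pair $(x(t),\bar{x})$ for each fixed $t\geq t_0$, which gives
\[
\Wass_{1}(\mm_{x(t)},\mm_{\bar{x}}) \leq \tau \Vert x(t) - \bar{x}\Vert .
\]
Note that this is legitimate since $x(t)\in\cH$ for every $t$ (by \cref{prop:1st_order_wp}, the strong solution takes values in $\cH$), so both arguments lie in the domain of validity of \cref{assump:2}.

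Next, under \cref{assump:1,assump:2,assump:5} together with \cref{assump:4}, \cref{thm:1st_order_convergence} supplies the bound $\Vert x(t) - \bar{x}\Vert \leq \theta^{-1}(2t - \hat t)$ with $\hat t = 2t_0 - \theta(\Vert x_0 - \bar{x}\Vert)$; substituting this into the displayed inequality yields the first claim. For the second claim, one replaces \cref{assump:4} by \cref{assump:3} with $\rho = \beta\tau/\mu < 1$; then the second part of \cref{thm:1st_order_convergence} gives $\Vert x(t) - \bar{x}\Vert \leq \Vert x_0 - \bar{x}\Vert e^{-2\mu(1-\rho)(t-t_0)}$, and absorbing the factor $e^{2\mu(1-\rho)t_0}$ and the $\tau$ from \cref{assump:2} into the constant $C$ produces exactly the stated exponential rate. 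Since every ingredient is already proved, the argument is a one-line substitution in each regime.

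\begin{proof}
Fix $t\geq t_0$. Since $x(t)\in\cH$, \cref{assump:2} applied to the pair $(x(t),\bar{x})$ gives
\[
\Wass_{1}(\mm_{x(t)},\mm_{\bar{x}}) \leq \tau \Vert x(t) - \bar{x}\Vert .
\]
Under \cref{assump:1,assump:2,assump:5} and \cref{assump:4}, \eqref{eq:speed1} of \cref{thm:1st_order_convergence} yields $\Vert x(t) - \bar{x}\Vert \leq \theta^{-1}(2t - \hat{t})$ with $\hat{t} = 2t_0 - \theta(\Vert x_0 - \bar{x}\Vert)$, whence
\[
\Wass_{1}(\mm_{x(t)},\mm_{\bar{x}}) \leq \tau\,\theta^{-1}\Big(2t - \hat{t}\Big).
\]
If instead \cref{assump:3} holds with $\rho := \beta\tau/\mu < 1$, then \eqref{eq:speed2} gives $\Vert x(t) - \bar{x}\Vert \leq \Vert x_0 - \bar{x}\Vert e^{-2\mu(1-\rho)(t-t_0)}$, so that
\[
\Wass_{1}(\mm_{x(t)},\mm_{\bar{x}}) \leq \tau \Vert x_0 - \bar{x}\Vert e^{2\mu(1-\rho)t_0} e^{-2\mu(1-\rho)t} = C e^{-2\mu(1-\rho)t},
\]
with $C = \tau\Vert x_0 - \bar{x}\Vert e^{2\mu(1-\rho)t_0}$. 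This proves both assertions.
\end{proof}
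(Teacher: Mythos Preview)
Your proposal is correct and matches the paper's approach exactly: the paper states the corollary as an immediate consequence of \cref{assump:2} combined with \cref{thm:1st_order_convergence}, without even writing out a proof, and your argument is precisely this one-line substitution in each regime.
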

		%-----------------------------------------------------------------------
%% Section 2
%-----------------------------------------------------------------------
\section{ Inertial second-order system with viscous and Hessian damping}\label{section:hessian2ndorder}
 In this section, to avoid technicalities, we restrict ourselves to the smooth convex optimization case where \ie $A = \nabla g$ so that (recall \eqref{eq:f_mu})
\begin{equation}\label{eq:operator_smooth_case}
F_{\mm} = \nabla G_{\mm}~~\mbox{with}~~G_{\mm} := g + f_{\mm} ~\text{and}~f_\mm := \Ex_{\xi\sim\mm}(f(\cdot,\xi)).
\end{equation}
In this case, \cref{assump:1,assump:5} read as follows: 
\begin{assumption}\label{assump:1'}{~}\medskip
\begin{itemize}
\item $g \in C^1(\cH) \cap \Gamma_0(\cH)$;
\item $f \in C^1(\cH \times \Xi)$, $f(x,\cdot)$ is $\mm_x$-measurable and $C^{1,1}_\beta(\Xi)$ for every $x \in \cH$, and $f_{\mm_{x}} \in C^{1,1}_L(\cH)$ for every $x \in \cH$.
\end{itemize}
\end{assumption}
On the other hand, \cref{assump:3} specializes to
\begin{assumption}[Strong convexity]\label{assump:3'}
$\exists \mu > 0$ such that $G_{\mm_{x}}$ is $\mu$-strongly convex for every $x\in\cH$.
\end{assumption}
This last assumption is true for instance if $f(\cdot,\xi)$ is $\mu$-strongly convex for $m_x$-almost
every $\xi \in \Xi$.\\
 
Let us first start with a discussion of the following second-order in time ODE
	\begin{equation}
		%\tag{ISMI}
		\label{eq:sec2_1}
		\left\lbrace
		\begin{aligned}
			&\ddot{x}(t) + \gamma(t)\dot{x}(t) + \nabla G_{\mm_{x(t)}}(x(t)) = 0,~\text{a.e}~t\in [t_0,T]\\
			&x(t_0) = x_0 \in \cH,
		\end{aligned}
		\right.
	\end{equation}
	where $\gamma:\R^{+}\to\R^{+}$ is a continuous function, usually referred to as the viscous damping coefficient. In a smooth convex optimization deterministic setting, \ie $F_{\mm_{x}} = \nabla f$ where $f$ is a smooth, strongly convex function, systems of the form \eqref{eq:sec2_1} were first studies by Polyak in \cite{polyak1964some} with a fixed viscous damping coefficient. Later on, this kind of systems were studied by \cite{attouch2000heavy} and then \cite{Su&al} where the authors establish the link between the continuous dynamics with $\gamma(t) = \frac{3}{t}$ and the Nesterov's gradient method \cite{Nesterov1983AMF}. This is a very active research field (see, \eg \cite{attouch2016rate,apidopoulos2020convergence,attouch2019rate,laszlo2021convergence} and the references therein). \\
	
Following what we have done \cref{section:firstorder}, \eqref{eq:sec2_1} can be reformulated again as a Lipschitzian perturbation of an inertial gradient system as follows
\begin{equation}\label{eq:pertubed_HBF}
		\ddot{x}(t) + \gamma(t)\dot{x}(t) + \nabla G_{\mm_{\bar{x}}}(x(t)) +  \e_{\bar{x}}(x(t)) = 0 .
\end{equation}
Even in absence of perturbations, it is well-known known that the inertial system \eqref{eq:pertubed_HBF} may suffer from transverse oscillations, and it is desirable to attenuate them. This is precisely the motivation behind the introduction of geometric Hessian-driven damping in \cite{alvarez2002second} (sometimes called Newton-type inertial dynamics). The inertial system we consider then features both viscous and Hessian-driven damping and reads
\begin{equation*}
\ddot{x}(t) + \gamma(t)\dot{x}(t) + \omega(t) \frac{\dd }{\dd t}\left(\nabla G_{\mm_{\bar{x}}}(x(t))\right) + \nabla G_{\mm_{x(t)}}(x(t)) = 0,
\end{equation*}
where $\omega:[0,\infty)\to\R^{+}$ is a continuous function usually referred to as the Hessian-driven damping coefficient, and which will be taken to be constant, \ie $\omega(t)\equiv \omega >0$. The case $\omega=0$ then recovers \eqref{eq:sec2_1}. Following the reasoning of \cref{section:firstorder}, then under \cref{assump:1',assump:2,assump:3'} this system is also equivalent to
		\begin{equation}
		\tag{ISEHD$_{\gamma,\omega}$}
		\label{eq:sec2_explicit}
		\ddot{x}(t) + \gamma(t)\dot{x}(t)  + \nabla G_{\mm_{\bar{x}}}(x(t)) + \omega \frac{\dd }{\dd t}\left(\nabla G_{\mm_{\bar{x}}}(x(t))\right) +  \e_{\bar{x}}(x(t)) +{\omega\frac{\dd }{\dd t}\e_{\bar{x}}(x(t))} = 0~\text{a.e}~t\in [t_0,T] .
	\end{equation}
	Recall that ${\e}_{\bar{x}}$ is Lipschitz continuous by Lemma~\ref{lemma:3}, and thus a.e. differentiable so that the last term in system \eqref{eq:sec2_explicit} makes sense whenever $x$ is absolutely continuous.
	
The acronym ISEHD stands for Inertial System with Explicit Hessian Damping. The Hessian damping is said to be explicit since, when $G_{\mm_{\bar{x}}}$ is of class $C^2$ and $x$ is smooth, we have 
	\[
	\frac{\dd }{\dd t}\left(\nabla G_{\mm_{\bar{x}}}(x(t))\right) = \nabla^2 G_{\mm_{\bar{x}}}(x(t)) \dot{x}(t).
	\]
Variants and generalizations of the inertial systems with Hessian-driven damping were studied by multiple authors (see, \eg, \cite{attouch2014dynamical,attouch2016fast,Attouch-Chbani-Fadili-Riahi}). The study of the effect of perturbations on these systems, \ie problems of the form \eqref{eq:sec2_explicit}, was carried out in the recent work \cite{Attouch-Fadili-Kungertsev}. We will take inspiration from this work but our analysis will depart from that of \cite{Attouch-Fadili-Kungertsev} in some important ways. For instance, and most importantly, the perturbations in \cite{Attouch-Fadili-Kungertsev} depend only on time while they depend on the trajectory in this paper. This poses a few challenges that we tackle by exploiting the Lipschitz continuity property enjoyed by our perturbations (see for instance \cref{lemma:3}).

\subsection{Well-posedness}
	\subsubsection{Equivalent first-order formulation}
	\begin{proposition}\label{proposition:1st_order_form}
		Suppose that \cref{assump:1'} holds, that $\gamma(t)\geq 0, \omega >0$ with $\gamma \in C^1([t_0,+\infty[)$. For any initial conditions $(x_0,v_0)\in\cH\times \cH$, the dynamics \eqref{eq:sec2_explicit} admits an equivalent formulation of the form
		\begin{equation}
			\label{eq:system_explicit}
			\left\lbrace
			\begin{aligned}
				\dot{x}(t) &+ \omega\Big( \nabla G_{\mm_{\bar{x}}}(x(t)) + \e_{\bar{x}}(x(t)) \Big) - \left(\frac{1}{\omega} - \gamma(t)\right) x(t) + \frac{1}{\omega} y(t) &= 0\\
				\dot{y}(t)& - \left(\frac{1}{\omega} - \gamma(t) - \dot{\gamma}(t)\omega\right) x(t) + \frac{1}{\omega} y(t) &=0,
			\end{aligned}
			\right.
		\end{equation}
		with initial conditions $x(t_0) = x_0, y(t_0) = -\omega\left(v_0 + \omega\nabla G_{\mm_{\bar{x}}}(x_0)\right) + \left(1 - \omega\gamma(t_0)\right)x_0 - \omega^2 \e_{\bar{x}}(x_0)$.
	\end{proposition}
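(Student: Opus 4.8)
The plan is to recognise \eqref{eq:sec2_explicit} as a second-order ODE whose only genuinely second-order feature is the time derivative $\frac{\dd}{\dd t}$ acting on the (Lipschitz) first-order part, and to linearise it by introducing an explicit auxiliary variable, a device used for inertial dynamics with Hessian damping in \cite{alvarez2002second,Attouch-Fadili-Kungertsev}. First I would collect the non-inertial terms: setting $\Phi_{\bar{x}}(x):=\nabla G_{\mm_{\bar{x}}}(x)+\e_{\bar{x}}(x)$, \cref{assump:1'} makes $\nabla G_{\mm_{\bar{x}}}$ $L$-Lipschitz and \cref{lemma:3} makes $\e_{\bar{x}}$ Lipschitz, so $\Phi_{\bar{x}}$ is Lipschitz on $\cH$; hence $t\mapsto\Phi_{\bar{x}}(x(t))$ is absolutely continuous on compact subintervals whenever $x$ is, and \eqref{eq:sec2_explicit} is equivalent to
\[
\ddot{x}(t)+\gamma(t)\dot{x}(t)+\omega\frac{\dd}{\dd t}\Phi_{\bar{x}}(x(t))+\Phi_{\bar{x}}(x(t))=0\qquad\text{a.e. on }[t_0,T].
\]

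For the implication ``\eqref{eq:sec2_explicit}$\,\Rightarrow\,$\eqref{eq:system_explicit}'', given a solution $x$ with $\dot{x}(t_0)=v_0$ I would define
\[
y(t):=-\omega\dot{x}(t)+\bigl(1-\omega\gamma(t)\bigr)x(t)-\omega^{2}\Phi_{\bar{x}}(x(t)),
\]
chosen precisely so that $y(t_0)$ equals the value prescribed in the statement. Rearranging this definition already gives the first line of \eqref{eq:system_explicit}. For the second line I would differentiate $y$ (licit a.e.\ by the regularity above), replace $\ddot{x}$ using the compact form, and simplify: the $\omega^{2}\frac{\dd}{\dd t}\Phi_{\bar{x}}(x)$ and the $\omega\gamma(t)\dot{x}$ contributions cancel, leaving $\dot{y}=\dot{x}+\omega\Phi_{\bar{x}}(x)-\omega\dot{\gamma}(t)x$; substituting $\dot{x}+\omega\Phi_{\bar{x}}(x)=\bigl(\tfrac1\omega-\gamma(t)\bigr)x-\tfrac1\omega y(t)$ from the first line then yields exactly the second line.

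For the converse, from a solution $(x,y)$ of \eqref{eq:system_explicit} I would solve the first line for $\omega\Phi_{\bar{x}}(x)=-\dot{x}+\bigl(\tfrac1\omega-\gamma(t)\bigr)x-\tfrac1\omega y$, differentiate, insert $\dot{y}$ from the second line, and regroup: the $\dot{\gamma}(t)x$ terms cancel and, using the first line once more, the remaining terms collapse to the compact form above, i.e.\ to \eqref{eq:sec2_explicit}. Evaluating the first line at $t=t_0$ with the prescribed $y(t_0)$ forces $\dot{x}(t_0)=v_0$, so the two Cauchy problems correspond; since $v_0\mapsto y(t_0)$ is affine with invertible linear part (recall $\omega>0$), this correspondence is a bijection, which is the asserted equivalence ``for any initial conditions $(x_0,v_0)$''.

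All of this amounts to elementary bookkeeping with time derivatives; the one place where care is needed — and the main, mild, obstacle — is that $\e_{\bar{x}}$ (and hence $\Phi_{\bar{x}}$) is only Lipschitz rather than $C^{1}$, so $\frac{\dd}{\dd t}\Phi_{\bar{x}}(x(t))$ must be manipulated as an a.e.\ defined object arising from the composition of a Lipschitz map with an absolutely continuous curve. This is exactly the regularity class in which \eqref{eq:sec2_explicit} was formulated, so no hypotheses beyond \cref{assump:1'} and $\gamma\in C^{1}$ are required.
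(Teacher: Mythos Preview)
Your proposal is correct and follows essentially the same approach as the paper: define the auxiliary variable $y$ from $x$ and $\dot{x}$ so that the first line of \eqref{eq:system_explicit} holds by construction, then verify the second line by differentiating and using \eqref{eq:sec2_explicit}; conversely, differentiate the first line of \eqref{eq:system_explicit}, eliminate $\dot{y}$ via the second line and $y$ via the first, and recover \eqref{eq:sec2_explicit}. Your shorthand $\Phi_{\bar{x}}$ and your explicit remarks on the a.e.\ interpretation of $\tfrac{\dd}{\dd t}\Phi_{\bar{x}}(x(t))$ and on the bijectivity of $v_0\mapsto y(t_0)$ are welcome additions that the paper leaves implicit, but the underlying calculation is identical.
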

	
	\begin{proof}
		Let $(x,y)$ be a solution of \eqref{eq:system_explicit}. By differentiation of the first equation in \eqref{eq:system_explicit}, we get
		\[
		\ddot{x}(t) + \omega\frac{\dd }{\dd t}(\nabla G_{\mm_{\bar{x}}}(x(t))) + \omega\frac{\dd }{\dd t}\e_{\bar{x}}(x(t)) + \dot{\gamma}(t) x(t) -  \Big(\frac{1}{\omega} - \gamma(t)\Big) \dot{x}(t) +  \frac{1}{\omega} \dot{y}(t) = 0.
		\]
		Replacing $\dot{y}$ by its expression from the second equation in \eqref{eq:system_explicit}, we obtain:
		%\begin{equation}
		\begin{multline*}
			\ddot{x}(t) + \omega  \frac{\dd }{\dd t}(\nabla G_{\mm_{\bar{x}}}(x(t))) + \omega\frac{\dd }{\dd t}\e_{\bar{x}}(x(t)) + \dot{\gamma}(t) x(t) \\
			- \Big(\frac{1}{\omega} - \gamma(t)\Big) \dot{x}(t) 
			+ \frac{1}{\omega} \left(\Big(\frac{1}{\omega} - \gamma(t) - \dot{\gamma}(t)\omega\Big) x(t) - \frac{1}{\omega} y(t) \right)= 0,
		\end{multline*}
		and using again the first equation in \eqref{eq:system_explicit} to eliminate $y(t)$, we get:
		\begin{multline*}
			\ddot{x}(t) + \omega  \frac{\dd }{\dd t}(\nabla G_{\mm_{\bar{x}}}(x(t))) + \omega \frac{\dd }{\dd t}\e_{\bar{x}}(x(t)) + \dot{\gamma}(t) x(t) - \Big(\frac{1}{\omega} - \gamma(t)\Big) \dot{x}(t) \\
			+ \frac{1}{\omega} \left(\Big(\frac{1}{\omega} - \gamma(t)-\dot{ \gamma}(t)\omega\Big) x(t)  + \dot{x}(t) + \omega\Big( \nabla G_{\mm_{\bar{x}}}x(t) + \e_{\bar{x}}(x(t)) \Big) - \Big(\frac{1}{\omega} - \gamma(t)\Big) x(t) \right)= 0,
		\end{multline*}
		and after simplifications, we recover \eqref{eq:sec2_explicit}. Conversely, let $x$ be a trajectory solution to \eqref{eq:sec2_explicit} with initial conditions $(x_0,v_0)\in\cH\times \cH$ and define 
		\[
		y(t) = -\omega\left(\dot{x}(t) + \omega\Big( \nabla G_{\mm_{\bar{x}}}x(t) + \e_{\bar{x}}(x(t)) \Big) - \Big(\frac{1}{\omega} - \gamma(t)\Big)x(t) \right).
		\]
		By differentiating the previous formula and using \eqref{eq:sec2_explicit}, we recover the second equation of \eqref{eq:system_explicit}, as desired.
		%\end{equation}
	\end{proof}

 In the above, we have taken the derivatives in time as if they exist. We will show shortly that this is actually the case.

\subsubsection{Existence and uniqueness of a solution trajectory}
 \cref{proposition:1st_order_form} opens the door to  considering even the nonsmooth version of \eqref{eq:sec2_explicit} via \eqref{eq:system_explicit} requiring only that $g \in \Gamma_{0}(\cH)$. Thus, writing $\partial G_{\mm_{\bar{x}}}(x(t)) = \partial g + \Ex_{\xi\sim\mm_{\bar{x}}}(\nabla f(\cdot,\xi))$, we can consider the differential inclusion

		\begin{equation}
			\label{eq:system_explicit_inclusion}
			\left\lbrace
			\begin{aligned}
				\dot{x}(t) &+ \omega\Big( \partial G_{\mm_{\bar{x}}}(x(t)) + \e_{\bar{x}}(x(t)) \Big) - \left(\frac{1}{\omega} - \gamma(t)\right) x(t) + \frac{1}{\omega} y(t) &\ni 0\\
				\dot{y}(t)& - \left(\frac{1}{\omega} - \gamma(t) - \dot{\gamma}(t)\omega\right) x(t) + \frac{1}{\omega} y(t) &=0,
			\end{aligned}
			\right.
		\end{equation}
		with initial conditions $(x(t_0) = x_0, y(t_0) = y_0) \in {\dom(G_{\mm_{\bar{x}}})}\times \cH$.
%	\end{definition}

One of the main advantages of \eqref{eq:system_explicit_inclusion} is that it can be easily recast as a differential inclusion governed by a Lipschitz perturbation of a maximal monotone operator on the product space $\cH\times\cH$. Indeed, setting $Z(t)= (x(t),y(t))$, $\Aa(x,y) = (\omega \partial G_{\mm_{\bar{x}}}(x),0)$ and 
	\[
	\E(t,x,y) = \left(\omega \e_{\bar{x}}(x(t)) - \Big(\frac{1}{\omega} - \gamma(t)\Big) x(t) + \frac{1}{\omega} y(t) ,  -\Big(\frac{1}{\omega} - \gamma(t) - \dot{\gamma}(t)\omega\Big) x(t) + \frac{1}{\omega} y(t)\right),
	\]
	we immediately see that \eqref{eq:system_explicit_inclusion} can be written as
	\begin{equation}
		\dot{Z}(t) +  \Aa(Z(t)) + \E(t,Z(t))\ni 0_{\cH\times\cH},~Z(0) = (x_0,y_0).
	\end{equation}
	 Observe that $\Aa$ is nothing but the subdifferential of the function $H(x,y)=\omega G_{\mm_{\bar{x}}}(x)$, and thus $\Aa$ is maximal monotone if the smooth part is monotone (see \cref{lemma:0}). Then, \eqref{eq:system_explicit_inclusion} can be written as the differential inclusion in the product space $\cH\times\cH$ 
	\begin{equation}
		\tag{MIS}
		\label{eq:sec2_3}
		\left\lbrace
		\begin{aligned}
			\dot{Z}(t) &+  \Aa(Z(t)) + \E(t,Z(t))\ni 0_{\cH\times\cH},~\text{a.e}~t\in [t_0,T]\\
			Z(0) &=(x_0,y_0) \in \dom(g)\times \cH,
		\end{aligned}
		\right.
	\end{equation}
	which fits in the framework of Lipschitz perturbations of maximal monotone operators as in \cref{section:firstorder}. Notice that this formulation is different from the classical Hamiltonian one.
	
	Before stating the main result, let us recall that we endow the product space with the scalar product $\langle(u,v),(u^*,v^*)\rangle_{\cH\times\cH} = \langle u,u^*\rangle + \langle v,v^*\rangle$, and the induced norm $\Vert (u,v)\Vert_{\cH\times\cH} = \sqrt{\Vert u\Vert^2 + \Vert v\Vert^2}$.
	We have the following auxiliary results
	\begin{lemma}\label{lemma:lip_2nd_order}
		 Suppose that \cref{assump:1',assump:2} hold. Consider the operator $\E$ where $\omega >0$ and $\gamma \in C^1([t_0,+\infty[)$. Then, for any $t \in [t_0,T]$, $\E(t,\cdot,\cdot)$ is Lipschitz continuous on $\cH \times \cH$.
	\end{lemma}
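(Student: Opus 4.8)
The claim is that for each fixed $t\in[t_0,T]$, the map $\E(t,\cdot,\cdot)\colon\cH\times\cH\to\cH\times\cH$ is Lipschitz continuous. The strategy is simply to estimate the two components of $\E(t,x,y)-\E(t,x',y')$ separately and then combine them using the product norm. Recall that
\[
\E(t,x,y) = \left(\omega \e_{\bar{x}}(x) - \Big(\tfrac{1}{\omega} - \gamma(t)\Big) x + \tfrac{1}{\omega} y ,\; -\Big(\tfrac{1}{\omega} - \gamma(t) - \dot{\gamma}(t)\omega\Big) x + \tfrac{1}{\omega} y\right),
\]
so each component is an affine combination of $x$, $y$, and (in the first component) the nonlinear term $\e_{\bar{x}}(x)$.

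The key input is \cref{lemma:3}: under \cref{assump:1,assump:2,assump:5} — which in the smooth setting of this section are guaranteed by \cref{assump:1',assump:2} (note $\e_{\bar x}$ depends only on space, not time) — the map $\e_{\bar{x}}(\cdot)$ is $(2L+\beta\tau)$-Lipschitz. Hence for the first component,
\[
\Big\|\omega\big(\e_{\bar{x}}(x)-\e_{\bar{x}}(x')\big) - \big(\tfrac1\omega-\gamma(t)\big)(x-x') + \tfrac1\omega(y-y')\Big\|
\le \Big(\omega(2L+\beta\tau) + \big|\tfrac1\omega-\gamma(t)\big|\Big)\|x-x'\| + \tfrac1\omega\|y-y'\|,
\]
using the triangle inequality. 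For the second component, an analogous bound gives a constant $\big|\tfrac1\omega-\gamma(t)-\dot\gamma(t)\omega\big|$ on $\|x-x'\|$ and $\tfrac1\omega$ on $\|y-y'\|$. Since $\gamma\in C^1([t_0,+\infty[)$, both $\gamma(t)$ and $\dot\gamma(t)$ are finite for the fixed $t$ under consideration, so all these coefficients are finite constants. Setting
\[
c(t) := \max\Big\{\omega(2L+\beta\tau) + \big|\tfrac1\omega-\gamma(t)\big| + \tfrac1\omega,\; \big|\tfrac1\omega-\gamma(t)-\dot\gamma(t)\omega\big| + \tfrac1\omega\Big\},
\]
each component of the difference is bounded by $c(t)\big(\|x-x'\|+\|y-y'\|\big)$, and passing to the product norm (using $\|(u,v)\|_{\cH\times\cH}\le\|u\|+\|v\|\le\sqrt2\,\|(u,v)\|_{\cH\times\cH}$) yields $\|\E(t,x,y)-\E(t,x',y')\|_{\cH\times\cH}\le 2c(t)\,\|(x,y)-(x',y')\|_{\cH\times\cH}$, which is the desired Lipschitz estimate.

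This lemma is essentially routine once \cref{lemma:3} is in hand, so I do not expect a genuine obstacle; the only point requiring a word of care is that the Lipschitz constant $c(t)$ depends on $t$ through $\gamma(t)$ and $\dot\gamma(t)$ — the statement only claims Lipschitz continuity for each fixed $t$, which is all that is needed here (uniform-in-$t$ control on compact subintervals would follow from continuity of $\gamma,\dot\gamma$, but is not asserted). It is worth remarking in the proof that the estimate is uniform over $(x,y),(x',y')$ in all of $\cH\times\cH$, since $\e_{\bar x}$ is globally Lipschitz and the remaining terms are affine; this global Lipschitz property on the product space is exactly what is required to invoke \cite[Proposition~3.13]{brezismaxmon} for the well-posedness of \eqref{eq:sec2_3} in the step that follows.
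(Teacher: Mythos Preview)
Your proposal is correct and follows essentially the same route as the paper: both arguments reduce to the Lipschitz bound on $\e_{\bar{x}}$ from \cref{lemma:3} for the nonlinear part and elementary affine estimates for the remaining terms, with the $t$-dependence of the constant entering only through $\gamma(t)$ and $\dot\gamma(t)$. The only cosmetic difference is that the paper works directly with the squared product norm and Young's inequality to produce an explicit constant, whereas you bound each component via the triangle inequality and then pass to the product norm; the resulting constants differ only by harmless numerical factors.
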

The dependence on $t$ of the Lipschitz constant appears only through $|\gamma(t)|$ and $|\dot{\gamma}(t)|$. With the two most popular choices of $\gamma(t)$, constant (as in heavy ball method) or the asymptotically vanishing viscous damping $\gamma(t)=\alpha/t$, both $|\gamma(t)|$ and $|\dot{\gamma}(t)|$ are uniformly bounded. This makes $\E(t,\cdot,\cdot)$ Lipschitz continuous uniformly in $t > t_0$.
	\begin{proof}
		Let $u,v,u^*,v^*\in\cH$ and set $p = (\frac{1}{\omega} - \gamma(t)), q = (\frac{1}{\omega} - \gamma(t)-\dot{\gamma}(t)\omega).$ We have, for $t\in [t_0,T]$
		\[
		\begin{aligned}
			&\Vert \E(t,u,v) - \E(t,u^*,v^*)\Vert_{\cH\times\cH } \\
			&= \Big\Vert \Big(\omega(\e_{\bar{x}}(u)-\e_{\bar{x}}(u^*) + p(u^*-u)+\frac{1}{\omega}(v-v^*),\frac{1}{\omega}(v-v^*) + q(u^*-u)\Big)\Big\Vert_{\cH\times\cH}\\
			&=\sqrt{\Vert(\omega(\e_{\bar{x}}(u)-\e_{\bar{x}}(u^*) + p(u^*-u)+\frac{1}{\omega}(v-v^*)\Vert^2 + \Vert\frac{1}{\omega}(v-v^*) + q(u^*-u)\Vert^2}\\
			&\leq\sqrt{2\omega^2\Vert\e_{\bar{x}}(u)- \e_{\bar{x}}(u^*)\Vert^2 + (4p^2+2q^2)\Vert u-u^*\Vert^2 + \frac{6}{\omega^2}\Vert v-v^*\Vert^2}\\
			&=\sqrt{(2\omega^2  (2L+\beta\tau)^2 + 4p^2+2q^2)\Vert u-u^*\Vert^2 + \frac{6}{\omega^2}\Vert v-v^*\Vert^2}\\
					&\leq \Big(\sqrt{2}\omega (2L+\beta\tau) + 2\vert p\vert+\sqrt{2}\vert q\vert + \frac{\sqrt{6}}{\omega}\Big)\Vert (u,v) - (u^*,v^*)\Vert_{\cH\times\cH}\\
			&= K(\omega,\beta,\tau,\gamma,t)\Vert (u,v) - (u^*,v^*)\Vert_{\cH\times\cH}\\
		\end{aligned}
		\]
		where we have used Young's inequality and \cref{lemma:3} for the Lipschitz continuity of the operator $\e_{\bar{x}}$.
	\end{proof}

\begin{proposition}\label{prop:existence}
Assume that \cref{assump:1',assump:2,assump:3'} hold where we only require $g \in \Gamma_0(\cH)$ (but possibly nonsmooth). Suppose also that $\omega >0$ and $\gamma \in C^1([t_0,+\infty[;\cH)$ such that both $\gamma$ and $\dot{\gamma} \in L^2([t_0,T])$ for all $T > t_0$. Then, for any initial data $x_0\in\dom(g)$ and $y_0\in\cH$, there exists a unique global strong solution $(x,y): [t_0,+\infty[ \to \cH \times \cH$ to \eqref{eq:sec2_3} such that $x(t_0) = x_0$ and $y(t_0) = y_0$. Moreover, the solution $Z = (x,y)$ satisfies the following properties
		\begin{enumerate}[(i)]
			\item\label{it:1} $y \in C^1([t_0,+\infty[;\cH)$, and $\dot{y}(t)-\left(\frac{1}{\omega}-\gamma(t)-\omega\dot{\gamma}(t)\right) x(t)+\frac{1}{\omega} y(t)=0$, for all $t \geq t_0$;
\item\label{it:2} $x$ is absolutely continuous on $\left[t_0, T\right]$ and $\dot{x} \in L^2\left(t_0, T ; \mathcal{H}\right)$ for all $T>t_0$; 
\item\label{it:3}  $x(t) \in \dom(\partial g)$ for all $t>t_0$; 
\item\label{it:4} $x$ is Lipschitz continuous on any compact subinterval of $] t_0,+\infty[;$
\item\label{it:5} the function $t\in \left[t_0,+\infty\left[ \mapsto G_{\mm_{\bar{x}}}(x(t))\right.\right.$ is absolutely continuous on $\left[t_0, T\right]$ for all $T>t_0$; 
\item\label{it:6} there exists a function $\xi:\left[t_0,+\infty[\rightarrow \cH\right.$ such that
\begin{enumerate}
\item $\xi(t) \in \partial G_{\mm_{\bar{x}}}(x(t))$ for all $t>t_0$;
\item $\dot{x}(t)+\omega \xi(t)-\Big(\frac{1}{\omega}-\gamma(t)\Big) x(t)+\frac{1}{\omega} y(t)=0$ for almost every $t>t_0$;
\item  $\xi \in L^2\left(t_0, T ; \cH\right)$ for all $T>t_0$;
\item $\frac{\dd }{\dd t} G_{\mm_{\bar{x}}}(x(t))=\langle\xi(t), \dot{x}(t)\rangle$ for almost every $t>t_0$.
\end{enumerate}
\end{enumerate}
\end{proposition}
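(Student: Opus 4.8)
The plan is to recast \eqref{eq:sec2_3} as an abstract evolution inclusion driven by a subdifferential operator perturbed by a map that is Lipschitz in the state and continuous in time, and then to combine \cite[Proposition~3.13]{brezismaxmon} with the regularizing effect of subdifferential semigroups. With $Z=(x,y)$, $H(x,y)=\omega\,G_{\mm_{\bar{x}}}(x)$ and $\Aa=\partial H$, the system becomes $\dot{Z}(t)+\Aa(Z(t))+\E(t,Z(t))\ni 0$, $Z(t_0)=(x_0,y_0)$. First I would record that, under \cref{assump:1'} and \cref{assump:3'}, the function $G_{\mm_{\bar{x}}}=g+f_{\mm_{\bar{x}}}$ is proper, lower semicontinuous and ($\mu$-strongly, hence) convex, so $H\in\Gamma_{0}(\cH\times\cH)$ and $\Aa$ is maximal monotone on $\cH\times\cH$ (as noted after \eqref{eq:sec2_3}, \cf \cref{lemma:0}), with $\dom(\Aa)=\dom(\partial G_{\mm_{\bar{x}}})\times\cH=\dom(\partial g)\times\cH$ --- using $\partial G_{\mm_{\bar{x}}}=\partial g+\nabla f_{\mm_{\bar{x}}}$ as in the display preceding \eqref{eq:system_explicit_inclusion} --- and that the initial datum satisfies $(x_0,y_0)\in\dom(g)\times\cH=\dom(H)\subset\overline{\dom(\Aa)}$.

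Next I would settle existence and uniqueness. By \cref{lemma:lip_2nd_order}, for each fixed $t$ the map $\E(t,\cdot)$ is Lipschitz on $\cH\times\cH$ with a constant $K(t)=K(\omega,\beta,\tau,\gamma,t)$ depending on $t$ only through $|\gamma(t)|$ and $|\dot{\gamma}(t)|$; hence $K\in L^2(t_0,T)\subset L^1(t_0,T)$ on every $[t_0,T]$ (since $\gamma\in C^1$), while $t\mapsto\E(t,Z)$ is continuous (being affine in $(\gamma(t),\dot{\gamma}(t))$) and thus belongs to $L^1(t_0,T;\cH\times\cH)$ for each fixed $Z$. Then \cite[Proposition~3.13]{brezismaxmon} produces a unique strong solution on each $[t_0,T]$, and uniqueness makes these consistent as $T$ grows, so they glue into a unique global strong solution $Z=(x,y):[t_0,+\infty[\to\cH\times\cH$. (If one prefers to avoid the time-dependent version of the perturbation theorem, I would instead prove local existence by a Banach fixed-point argument on $C([t_0,t_0+\delta];\cH\times\cH)$ built from the solution operator of $\dot{Z}+\Aa(Z)\ni h$, and then extend globally via a Gronwall a priori bound that precludes finite-time blow-up, using the $\mu$-strong monotonicity of $\partial G_{\mm_{\bar{x}}}$ from \cref{assump:3'}.) When $g\in C^1(\cH)$, \cref{proposition:1st_order_form} identifies \eqref{eq:sec2_3} with \eqref{eq:system_explicit}, which is equivalent to \eqref{eq:sec2_explicit}, so this also yields well-posedness of the original second-order dynamics.

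To obtain the fine properties \ref{it:1}--\ref{it:6}, I would bootstrap. Once $Z$ is in hand, $t\mapsto\E(t,Z(t))$ is continuous on $[t_0,+\infty[$ --- a composition of continuous maps, since $Z$ is continuous and $\gamma,\dot{\gamma}$ are --- hence locally bounded and in $L^2_{\mathrm{loc}}$. Reading \eqref{eq:sec2_3} as $\dot{Z}+\partial H(Z)\ni-\E(\cdot,Z(\cdot))$ with $Z(t_0)\in\dom(H)$ and right-hand side in $L^2_{\mathrm{loc}}$, the regularity theory for subdifferential evolutions \cite[Chapter~III]{brezismaxmon} then gives: $\dot{Z}\in L^2(t_0,T;\cH\times\cH)$, whence $x$ is absolutely continuous on $[t_0,T]$ with $\dot{x}\in L^2(t_0,T;\cH)$ for all $T>t_0$, which is property~\ref{it:2}; absolute continuity of $t\mapsto H(Z(t))$, hence of $t\mapsto G_{\mm_{\bar{x}}}(x(t))$, on $[t_0,T]$, which is property~\ref{it:5}; and, via the regularizing effect, $Z(t)\in\dom(\partial H)$ for every $t>t_0$, \ie $x(t)\in\dom(\partial g)$, which is property~\ref{it:3}, together with local Lipschitz continuity of $Z$, hence of $x$, on compact subintervals of $]t_0,+\infty[$, which is property~\ref{it:4}. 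The first line of \eqref{eq:system_explicit_inclusion} then furnishes a measurable selection $\xi(t)\in\partial G_{\mm_{\bar{x}}}(x(t))$ for which the corresponding pointwise equation holds almost everywhere, with $\xi\in L^2(t_0,T;\cH)$ (read off from that equation together with $\dot{x}\in L^2$), while the chain rule for convex functions along absolutely continuous curves yields $\tfrac{\dd}{\dd t}G_{\mm_{\bar{x}}}(x(t))=\langle\xi(t),\dot{x}(t)\rangle$ for almost every $t$; this is property~\ref{it:6}. Finally, the second line of \eqref{eq:sec2_3}, namely $\dot{y}(t)=\bigl(\tfrac{1}{\omega}-\gamma(t)-\omega\dot{\gamma}(t)\bigr)x(t)-\tfrac{1}{\omega}y(t)$ a.e., has a right-hand side continuous on $[t_0,+\infty[$ (as $x,y,\gamma,\dot{\gamma}$ are), so $y\in C^1([t_0,+\infty[;\cH)$ and this identity holds for all $t\geq t_0$; this is property~\ref{it:1}.

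The hard part will be twofold. On the technical side, one must fit the time-dependent Lipschitz constant of $\E$ into the scope of \cite[Proposition~3.13]{brezismaxmon}, which is precisely why the hypothesis is phrased through $\gamma,\dot{\gamma}\in L^2$ on compact intervals, making $K$ integrable there. More substantively, properties \ref{it:2}--\ref{it:6} do not come out of the bare $W^{1,1}$ solution alone: one must first secure continuity of the strong solution in order to re-inject $\E(\cdot,Z(\cdot))$ as a locally bounded forcing term, and only then can the regularizing effect of the subdifferential semigroup be invoked to upgrade to square-integrability of $\dot{x}$, to the inclusion $x(t)\in\dom(\partial g)$ for $t>t_0$, to local Lipschitz continuity of the trajectory, and to the selection and chain-rule statements.
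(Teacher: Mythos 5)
Your proposal is correct and follows essentially the same route as the paper: the same product-space reformulation of \eqref{eq:sec2_3} as a Lipschitz perturbation of the subdifferential operator $\Aa$, the same appeal to \cref{lemma:0} and \cref{lemma:lip_2nd_order}, Brézis's perturbation theorem for existence and uniqueness, and the regularizing effect of the subdifferential semigroup for the fine properties (which the paper delegates to the arguments of \cite[Theorem~4.4]{attouch2016fast}). The only cosmetic difference is that the paper invokes the subdifferential version of the perturbation result (\cite[Proposition~3.12]{brezismaxmon}) rather than the general maximal monotone one, which already packages much of the extra regularity you obtain by bootstrapping.
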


\begin{proof}
%		Thanks to \cref{assump:4} and Moreau's theorem (see, \eg \cite[Theorem 20.25]{Baucshke&Combettes}) 
In view of our assumptions, $\Aa$ is maximal monotone (see \cref{lemma:0}) and $\E$ is Lipschitz continuous thanks to \cref{lemma:lip_2nd_order}. We deduce the existence of a unique strong global solution $Z=(x,y):[t_0,T]\to\cH\times\cH$ of \eqref{eq:sec2_3} via \cite[Proposition~3.12]{brezismaxmon}. The verification of items \ref{it:3}-\ref{it:6} can be done by following the main arguments of \cite[Theorem~4.4]{attouch2016fast}
	\end{proof}

Assuming that $g \in C^1(\cH)$ with a Lipschitz continuous gradient, the conclusions of \cref{prop:existence} can be strengthened as follows.
\begin{proposition}\label{prop:existencesmooth}
Assume that \cref{assump:1',assump:2,assump:3'} hold with $\nabla g$ Lipschitz continuous. Suppose also that $\omega >0$ and $\gamma \in C^1([t_0,+\infty[;\cH)$ such that both $\gamma$ and $\dot{\gamma} \in L^2([t_0,T])$ for all $T > t_0$. Then, for any $t_0 > 0$, and any Cauchy data $(x_0, \dot{x}_0)$, the system \eqref{eq:sec2_explicit} admits a unique global solution $x \in C^1([t_0,+\infty[;\cH)$ satisfying $(x(t_0),\dot{x}(t_0)) = (x_0, \dot{x}_0)$. Moreover, $\dot{x}$, $\nabla G_{\mm_{\bar{x}}}(x(\cdot))$ and $\e_{\bar{x}}(x(\cdot))$  are absolutely continuous.
\end{proposition}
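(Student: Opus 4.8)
The strategy is to exploit the extra regularity: once $\nabla g$ is Lipschitz continuous, the operator $\Aa$ appearing in the equivalent first-order formulation \eqref{eq:sec2_3} is no longer merely maximal monotone but single-valued and globally Lipschitz continuous, so the whole system falls under the classical Cauchy--Lipschitz theory in Hilbert space rather than the Brézis theory of maximal monotone operators invoked in \cref{prop:existence}. This is precisely what upgrades the $W^{1,1}$ solution of \cref{prop:existence} to a genuine $C^1$ solution.

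First I would record that under \cref{assump:1'} together with the Lipschitz continuity of $\nabla g$, the gradient $\nabla G_{\mm_{\bar{x}}}=\nabla g+\nabla f_{\mm_{\bar{x}}}$ is Lipschitz continuous on $\cH$, hence $\Aa(x,y)=(\omega\nabla G_{\mm_{\bar{x}}}(x),0)$ is Lipschitz continuous on $\cH\times\cH$. Combining this with \cref{lemma:lip_2nd_order}, whose Lipschitz constant for $\E(t,\cdot,\cdot)$ depends on $t$ only through $|\gamma(t)|$ and $|\dot\gamma(t)|$ --- both locally bounded since $\gamma\in C^1$ --- and noting that $(t,x,y)\mapsto\E(t,x,y)$ is jointly continuous, the vector field $(t,Z)\mapsto -\Aa(Z)-\E(t,Z)$ on $\cH\times\cH$ is jointly continuous and Lipschitz in $Z$ uniformly on each compact time interval. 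The Cauchy--Lipschitz (Picard--Lindelöf) theorem in the Hilbert space $\cH\times\cH$ then yields a unique maximal solution $Z=(x,y)\in C^1$ of \eqref{eq:sec2_3} with $Z(t_0)=(x_0,y_0)$, defined on a maximal interval $[t_0,T_{\max})$; it is $C^1$ because $\dot Z(t)=-\Aa(Z(t))-\E(t,Z(t))$ is continuous.

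Next I would show $T_{\max}=+\infty$: since $\Aa$ is Lipschitz it has at most linear growth, and $\E(t,\cdot)$ is affine in $Z$ with locally bounded coefficients, so $\|\dot Z(t)\|_{\cH\times\cH}\le a(t)\|Z(t)\|_{\cH\times\cH}+b(t)$ with $a,b$ locally bounded; Grönwall's inequality (\cref{lemma:gronwall1}) then bounds $\|Z(t)\|$ on every finite interval, which forbids finite-time blow-up and makes the maximal solution global, i.e.\ $Z\in C^1([t_0,+\infty[;\cH\times\cH)$. To conclude, I would translate back via \cref{proposition:1st_order_form}: choosing $y_0=-\omega(\dot{x}_0+\omega\nabla G_{\mm_{\bar{x}}}(x_0))+(1-\omega\gamma(t_0))x_0-\omega^2\e_{\bar{x}}(x_0)$, the computations of that proposition are now fully justified (since $Z\in C^1$ and $\nabla G_{\mm_{\bar{x}}}$, $\e_{\bar{x}}$ are Lipschitz, the curves $\nabla G_{\mm_{\bar{x}}}(x(\cdot))$ and $\e_{\bar{x}}(x(\cdot))$ are locally Lipschitz, hence a.e.\ differentiable), so the first component $x$ is the unique global $C^1$ solution of \eqref{eq:sec2_explicit} with $(x(t_0),\dot{x}(t_0))=(x_0,\dot{x}_0)$. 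Finally, solving the first equation of \eqref{eq:system_explicit} for $\dot{x}$ expresses it as a sum of $C^1$ terms ($x$, $y$, $\gamma(\cdot)x(\cdot)$) and the compositions $\nabla G_{\mm_{\bar{x}}}(x(\cdot))$, $\e_{\bar{x}}(x(\cdot))$, which are locally Lipschitz and hence absolutely continuous; therefore $\dot{x}$, $\nabla G_{\mm_{\bar{x}}}(x(\cdot))$ and $\e_{\bar{x}}(x(\cdot))$ are all absolutely continuous.

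I expect the only mildly delicate point to be bookkeeping: ensuring that the Lipschitz constant of the combined vector field is uniform on compact time intervals (which rests on $\gamma\in C^1$, so that $|\gamma|$ and $|\dot\gamma|$ are locally bounded, rather than merely on the stated $L^2$ integrability) and correctly matching the Cauchy data $(x_0,\dot x_0)$ with $(x_0,y_0)$ through the change of variables; the local existence/uniqueness and the global extension by an a priori bound are then entirely standard.
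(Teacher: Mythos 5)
Your proof is correct, but it takes a different route from the paper for the existence part. The paper's own proof is a two-line regularity upgrade: it takes the strong solution already produced by \cref{prop:existence} (which rests on the Br\'ezis theory of Lipschitz perturbations of maximal monotone operators), reads $\dot{x}$ off the first equation of \eqref{eq:system_explicit} as a combination of continuous functions to conclude $x\in C^1$, and then gets absolute continuity of $\dot{x}$ from the Lipschitz continuity of $\nabla G_{\mm_{\bar{x}}}$ and $\e_{\bar{x}}$ composed with the absolutely continuous $x$. You instead rebuild existence from scratch: since $\nabla g$ is Lipschitz, $\Aa$ is single-valued and globally Lipschitz, so \eqref{eq:sec2_3} is an honest ODE with a jointly continuous right-hand side that is Lipschitz in $Z$ uniformly on compact time intervals, and Picard--Lindel\"of plus a Gr\"onwall-based linear-growth estimate give a unique global $C^1$ solution without any appeal to monotonicity. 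Your closing step (matching $(x_0,\dot{x}_0)$ to $(x_0,y_0)$ and transporting regularity back through \cref{proposition:1st_order_form}) coincides with the paper's. What your approach buys is self-containedness and the observation --- which the paper itself only records in the remark following the proposition --- that convexity of $g$ plays no role in well-posedness here, only Lipschitzness; what it costs is redoing work that \cref{prop:existence} already provides. The only point to keep an eye on, which you do flag, is that the uniform-in-$t$ Lipschitz bound on compacta comes from $\gamma\in C^1$ (so $|\gamma|$ and $|\dot\gamma|$ are locally bounded), not from the $L^2$ hypothesis.
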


\begin{proof}
Under our regularity assumptions, we get from the conclusion of \cref{prop:existence} and the first equation of \eqref{eq:system_explicit} that $x$ is a $C^1([t_0, +\infty[;\cH)$ function. Moreover, $\dot{x}$ is absolutely continuous thanks to Lipschitz continuity of $\nabla G_{\mm_{\bar{x}}}$ and $\e_{\bar{x}}$, absolute continuity of $x$ already established in \cref{prop:existence}, together with continuity of $\gamma$, $\dot{\gamma}$ and $y$.
\end{proof}

\begin{remark}
The lack of differentiability of $\e_{\bar{x}}$ is the main obstacle for showing the existence of a classical solution even if $G_{\mm_{\bar{x}}}$ is assumed $C^2$. Moreover, arguing using the non-autonomous Cauchy-Lipschitz theorem, the convexity assumption on $g$ can be dropped in \cref{prop:existencesmooth} at the price of assuming $\gamma$ to be twice continuously differentiable on $[t_0,+\infty[$. The proof is left to the reader.
\end{remark}

\subsection{Convergence properties}
Now let us examine the convergence properties of \eqref{eq:sec2_explicit}. 
We will work under \cref{assump:1',assump:2,assump:3'} where $\nabla g$ is Lipschitz continuous. The $\mu$-strong convexity of $G_{\mm_{\bar{x}}}$ allows to tune the viscous damping coefficient to the modulus $\mu$ by taking $\gamma(t)\equiv  2\sqrt{\mu}$ as advocated in \cite{Attouch-Chbani-Fadili-Riahi,Attouch-Fadili-Kungertsev}. From now on, we focus on the following system
		\begin{equation}
		\tag{ISEHD$_{2\sqrt{\mu},\omega}$}
		\label{eq:sec2_explicit_st_cvx}
		\ddot{x}(t) + {2\sqrt{\mu}}\dot{x}(t)  + \nabla G_{\mm_{\bar{x}}}(x(t)) + \omega  \frac{\dd }{\dd t}\left(\nabla  G_{\mm_{\bar{x}}}(x(t))\right) +  \e_{\bar{x}}(x(t))+{\omega\frac{\dd }{\dd t}\e_{\bar{x}}(x(t))} = 0 .
	\end{equation}
	Thanks to our assumptions, this systems falls within the framework of the previous section so that \cref{prop:existencesmooth} applies.%, from which we get existence and uniqueness of a global solution $x \in C^1([t_0,+\infty[;\cH)$ whose velocity $\dot{x}$ is absolutely continuous.}\medskip
	
	 To perform Lyapunov analysis, let us define the following energy function $\V:[t_0,\infty[\to\R^+$ by
	\[
	\V(t):= G_{\mm_{\bar{x}}}(x(t)) - G_{\mm_{\bar{x}}}^{*} + \frac{1}{2}\Vert v(t)\Vert^{2},~\mbox{where}~v(t) := \sqrt\mu\left(x(t) - \bar{x}\right) + \dot{x}(t) + \omega \nabla G_{\mm_{\bar{x}}}(x(t)).
	\]
	 where $G_{\mm_{\bar{x}}}^{*} = \min G_{\mm_{\bar{x}}}(\cH)$.\\
		
We prove the following result.
	\begin{theorem}\label{thm:th1}
		 Assume that \cref{assump:1',assump:2,assump:3'} hold and $\nabla g$ is Lipschitz continuous. Let $x:[t_0,+\infty[\to\cH$ be the solution trajectory of \eqref{eq:sec2_explicit_st_cvx}. Suppose that $\rho := \frac{\beta\tau}{\mu}$ and the damping coefficient $\omega$ satisfy  
		\begin{equation}\label{eq:condition_rho_omega}
		 0 \leq \omega\leq\min\left(\frac{1}{2\sqrt\mu},\frac{\sqrt{\mu}}{2\sqrt{2} (2L+\beta\tau)}\right) \quad \text{and} \quad 16\rho^2 + \omega < 1.
		\end{equation} 
		Then, we have:
		%$:=\frac{\beta\tau}{\mu}\leq\frac{\sqrt{2}}{4}$. 		
		\begin{enumerate}[label=(\arabic*)]
		\item\label[noref]{it:hd-it1} for all $t\geq t_0$
		\[
		\V(t)\leq \V(t_0) e^{-\frac{\sqrt\mu}{4}(t-t_0)} .
		\]
		In particular
\[
\frac{\mu}{2}\Vert x(t)-\bar{x}\Vert^2 \leq G_{\mm_{\bar{x}}}(x(t)) - G^{*}_{\mm_{\bar x}} \leq \V(t_0) e^{-\frac{\sqrt\mu}{4}(t-t_0)}.
\]		\item\label[noref]{it:hd-it2} There exists $C>0$ such that,
	\[
e^{-\sqrt\mu t}\int_{t_0}^{t} e^{\sqrt{\mu}s}\Vert\nabla G_{\mm_{\bar{x}}}(x(s))\Vert^2 \dd s \leq C e^{-\frac{\sqrt{\mu}}{4}t},~\forall t\geq t_0.
\]
		\end{enumerate}
	\end{theorem}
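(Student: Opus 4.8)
The plan is a Lyapunov analysis of the energy $\V$, in the spirit of \cite{Attouch-Chbani-Fadili-Riahi,Attouch-Fadili-Kungertsev}, with the extra bookkeeping forced by the fact that the perturbation $\e_{\bar x}$ depends on the trajectory (and not just on time).

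\emph{Preliminaries.} First I would record the available regularity and a few pointwise bounds. By \cref{prop:existencesmooth} the trajectory $x$ is $C^1$ on $[t_0,+\infty[$ and $\dot x$, $\nabla G_{\mm_{\bar x}}(x(\cdot))$, $\e_{\bar x}(x(\cdot))$ are absolutely continuous, so $\V$ is locally absolutely continuous, hence a.e.\ differentiable, and $\frac{\dd}{\dd t}\bigl(G_{\mm_{\bar x}}(x(t))-G^*_{\mm_{\bar x}}\bigr)=\langle\nabla G_{\mm_{\bar x}}(x(t)),\dot x(t)\rangle$ a.e.\ (chain rule). Since $0\in F_{\mm_{\bar x}}(\bar x)=\nabla G_{\mm_{\bar x}}(\bar x)$, $\bar x$ is the minimizer of $G_{\mm_{\bar x}}$, so $G^*_{\mm_{\bar x}}=G_{\mm_{\bar x}}(\bar x)$ and $\V\geq0$. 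Using $\e_{\bar x}(\bar x)=0$ with \cref{corollary:1} (pair $(x,\bar x)$) gives $\Vert\e_{\bar x}(x)\Vert\leq\beta\tau\Vert x-\bar x\Vert$, while \cref{lemma:3} composed with the $C^1$ curve $x$ shows $t\mapsto\e_{\bar x}(x(t))$ is $(2L+\beta\tau)$-Lipschitz, whence $\Vert\frac{\dd}{\dd t}\e_{\bar x}(x(t))\Vert\leq(2L+\beta\tau)\Vert\dot x(t)\Vert$ for a.e.\ $t$. Finally $\nabla G_{\mm_{\bar x}}=\nabla g+\nabla f_{\mm_{\bar x}}$ is Lipschitz (by \cref{assump:1'} and the hypothesis on $\nabla g$), say with constant $\Lambda$, so $\Vert\nabla G_{\mm_{\bar x}}(x)\Vert\leq\Lambda\Vert x-\bar x\Vert$; and $\mu$-strong convexity of $G_{\mm_{\bar x}}$ gives $\tfrac{\mu}{2}\Vert x-\bar x\Vert^2\leq G_{\mm_{\bar x}}(x)-G^*_{\mm_{\bar x}}\leq\langle\nabla G_{\mm_{\bar x}}(x),x-\bar x\rangle$ and $\langle\nabla G_{\mm_{\bar x}}(x),x-\bar x\rangle\geq\mu\Vert x-\bar x\Vert^2$.

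\emph{Differentiating $\V$.} Substituting $\ddot x+\omega\frac{\dd}{\dd t}\nabla G_{\mm_{\bar x}}(x)=-2\sqrt\mu\dot x-\nabla G_{\mm_{\bar x}}(x)-\e_{\bar x}(x)-\omega\frac{\dd}{\dd t}\e_{\bar x}(x)$ from \eqref{eq:sec2_explicit_st_cvx} into $\dot v=\sqrt\mu\dot x+\ddot x+\omega\frac{\dd}{\dd t}\nabla G_{\mm_{\bar x}}(x)$ yields $\dot v=-\sqrt\mu\dot x-\nabla G_{\mm_{\bar x}}(x)-\e_{\bar x}(x)-\omega\frac{\dd}{\dd t}\e_{\bar x}(x)$. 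Then, expanding $\dot\V=\langle\nabla G_{\mm_{\bar x}}(x),\dot x\rangle+\langle v,\dot v\rangle$ with $v=\sqrt\mu(x-\bar x)+\dot x+\omega\nabla G_{\mm_{\bar x}}(x)$ and cancelling the terms that meet $\langle\nabla G_{\mm_{\bar x}}(x),\dot x\rangle$, one gets, a.e.,
\[
\begin{aligned}
\dot\V ={}& -\sqrt\mu\Vert\dot x\Vert^2 - \sqrt\mu\langle x-\bar x,\nabla G_{\mm_{\bar x}}(x)\rangle - \omega\Vert\nabla G_{\mm_{\bar x}}(x)\Vert^2\\
& - \mu\langle x-\bar x,\dot x\rangle - \sqrt\mu\,\omega\langle\nabla G_{\mm_{\bar x}}(x),\dot x\rangle + R(t),
\end{aligned}
\]
where $R(t)$ collects the four perturbation cross-terms $-\sqrt\mu\langle x-\bar x,\e_{\bar x}(x)\rangle$, $-\langle\dot x,\e_{\bar x}(x)\rangle$, $-\omega\langle\nabla G_{\mm_{\bar x}}(x),\e_{\bar x}(x)\rangle$, and $-\omega\langle v,\frac{\dd}{\dd t}\e_{\bar x}(x)\rangle$.

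\emph{Absorbing the cross-terms — the main obstacle.} The crux is to show that, under \eqref{eq:condition_rho_omega}, the dissipative terms $-\sqrt\mu\Vert\dot x\Vert^2$, $-\sqrt\mu\langle x-\bar x,\nabla G_{\mm_{\bar x}}(x)\rangle\leq-\mu\sqrt\mu\Vert x-\bar x\Vert^2$ and $-\omega\Vert\nabla G_{\mm_{\bar x}}(x)\Vert^2$ dominate $-\mu\langle x-\bar x,\dot x\rangle$, $-\sqrt\mu\,\omega\langle\nabla G_{\mm_{\bar x}}(x),\dot x\rangle$ and $R(t)$ with enough slack to leave $\dot\V(t)\leq-\tfrac{\sqrt\mu}{4}\V(t)-c\Vert\nabla G_{\mm_{\bar x}}(x(t))\Vert^2$ a.e.\ for some $c>0$. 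Each remaining term is estimated by Cauchy--Schwarz and Young's inequality, using $\Vert\e_{\bar x}(x)\Vert\leq\beta\tau\Vert x-\bar x\Vert$, $\Vert\frac{\dd}{\dd t}\e_{\bar x}(x)\Vert\leq(2L+\beta\tau)\Vert\dot x\Vert$, and $\Vert v\Vert\leq\sqrt\mu\Vert x-\bar x\Vert+\Vert\dot x\Vert+\omega\Vert\nabla G_{\mm_{\bar x}}(x)\Vert$; then $\V\leq(G_{\mm_{\bar x}}(x)-G^*_{\mm_{\bar x}})+\tfrac32\bigl(\mu\Vert x-\bar x\Vert^2+\Vert\dot x\Vert^2+\omega^2\Vert\nabla G_{\mm_{\bar x}}(x)\Vert^2\bigr)$ converts the leftover dissipation into $-c\V$, and a residual $\Vert x-\bar x\Vert^2$ is recycled into $\Vert\nabla G_{\mm_{\bar x}}(x)\Vert^2$ via $\Vert\nabla G_{\mm_{\bar x}}(x)\Vert\leq\Lambda\Vert x-\bar x\Vert$ (keeping $c>0$ even when $\omega=0$). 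The three constraints in \eqref{eq:condition_rho_omega} are exactly those making the Young weights close: $\omega\leq\tfrac{\sqrt\mu}{2\sqrt2(2L+\beta\tau)}$ tames the new term $-\omega\langle v,\frac{\dd}{\dd t}\e_{\bar x}(x)\rangle$ (absent in \cite{Attouch-Chbani-Fadili-Riahi,Attouch-Fadili-Kungertsev}, where the perturbation is time-dependent only), $\omega\leq\tfrac{1}{2\sqrt\mu}$ controls the contributions of order $\omega^2\Vert\nabla G_{\mm_{\bar x}}(x)\Vert^2$ against the budget $-\omega\Vert\nabla G_{\mm_{\bar x}}(x)\Vert^2$, and $16\rho^2+\omega<1$ (i.e.\ $\beta\tau$ small relative to $\mu$) is what leaves a strictly negative surplus once the $\e_{\bar x}$-terms have been absorbed. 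I expect this apportionment — deciding which share of each dissipative term offsets which cross-term so all constants close under \eqref{eq:condition_rho_omega} — to be the delicate step; everything else is routine.

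\emph{Conclusion.} For the first assertion, discard $-c\Vert\nabla G_{\mm_{\bar x}}(x)\Vert^2\leq0$ and integrate $\dot\V\leq-\tfrac{\sqrt\mu}{4}\V$ by Grönwall (\cref{lemma:gronwall1}) to get $\V(t)\leq\V(t_0)e^{-\frac{\sqrt\mu}{4}(t-t_0)}$; the displayed inequalities follow from $\tfrac{\mu}{2}\Vert x(t)-\bar x\Vert^2\leq G_{\mm_{\bar x}}(x(t))-G^*_{\mm_{\bar x}}\leq\V(t)$. For the second assertion, write $c\,e^{\sqrt\mu s}\Vert\nabla G_{\mm_{\bar x}}(x(s))\Vert^2\leq-e^{\sqrt\mu s}\dot\V(s)-\tfrac{\sqrt\mu}{4}e^{\sqrt\mu s}\V(s)$, integrate over $[t_0,t]$, integrate by parts the first term on the right ($\int_{t_0}^t e^{\sqrt\mu s}\dot\V(s)\,\dd s=e^{\sqrt\mu t}\V(t)-e^{\sqrt\mu t_0}\V(t_0)-\sqrt\mu\int_{t_0}^t e^{\sqrt\mu s}\V(s)\,\dd s$), and use $\V\geq0$ together with the first assertion to bound $\int_{t_0}^t e^{\sqrt\mu s}\V(s)\,\dd s\leq\tfrac{4}{3\sqrt\mu}\V(t_0)e^{\sqrt\mu t_0/4}e^{3\sqrt\mu t/4}$; multiplying through by $e^{-\sqrt\mu t}$ and using $e^{-\sqrt\mu t}\leq e^{-\sqrt\mu t/4}$ for $t\geq0$ yields the claim with an explicit $C$ depending on $\V(t_0)$, $t_0$, $\mu$ and $c$.
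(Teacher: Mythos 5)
Your plan is essentially the paper's proof: the same energy $\V$, the same substitution of the constitutive equation into $\dot v$, the same pointwise bounds $\Vert\e_{\bar x}(x)\Vert\leq\beta\tau\Vert x-\bar x\Vert$ (from \cref{corollary:1} and $\e_{\bar x}(\bar x)=0$) and $\Vert\tfrac{\dd}{\dd t}\e_{\bar x}(x(t))\Vert\leq(2L+\beta\tau)\Vert\dot x(t)\Vert$ (from \cref{lemma:3}), and the same absorption of the perturbation via Young's inequality against $\V\geq\tfrac12\Vert v\Vert^2$. The one step you defer --- the ``apportionment'' --- is precisely where the paper does its work: after Young with weights $\tfrac{\sqrt\mu}{8}\Vert v\Vert^2+\tfrac{4}{\sqrt\mu}\Vert\e_{\bar x}(x)\Vert^2+\tfrac{4\omega^2}{\sqrt\mu}\Vert\tfrac{\dd}{\dd t}\e_{\bar x}(x)\Vert^2$, the residual is a quadratic form in $\bigl(\Vert x-\bar x\Vert,\Vert\nabla G_{\mm_{\bar x}}(x)\Vert\bigr)$ with coefficients $\mathbf{a}=\tfrac{\mu}{4}-\tfrac{4\beta^2\tau^2}{\mu}$, $\mathbf{b}=\tfrac{\omega}{2\sqrt\mu}$, $\mathbf{c}=-\tfrac{\omega\sqrt\mu}{2}$, whose nonpositive discriminant is exactly what \eqref{eq:condition_rho_omega} buys; so your reading of which constraint controls which term is accurate and the constants do close, but be aware that as written your attempt asserts rather than executes this computation. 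The only genuine divergence is in part (2): you retain a $-c\Vert\nabla G_{\mm_{\bar x}}(x)\Vert^2$ term in the differential inequality for $\V$ (which requires the extra observation that the quadratic form dominates $\delta\Vert\nabla G_{\mm_{\bar x}}(x)\Vert^2$, via the strict discriminant when $\omega>0$ or via $\Vert\nabla G_{\mm_{\bar x}}(x)\Vert\leq\Lambda\Vert x-\bar x\Vert$ when $\omega=0$) and then integrate against $e^{\sqrt\mu s}$ with an integration by parts; the paper instead discards that term, first proves the decay of $\V$, and then extracts the gradient estimate from a second inequality $\dot U+\sqrt\mu\,U+\omega^2\Vert\nabla G_{\mm_{\bar x}}(x)\Vert^2\leq Ce^{-\sqrt\mu t/4}$ with $U=\sqrt\mu\Vert x-\bar x\Vert^2+2\omega\bigl(G_{\mm_{\bar x}}(x)-G^*_{\mm_{\bar x}}\bigr)$, obtained by expanding the already-established bound on $\Vert v\Vert^2$. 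Both routes are valid; yours has the mild advantage of yielding a nondegenerate gradient bound even at $\omega=0$, where the paper's $\omega^2$ prefactor vanishes.
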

	
	 Some remarks are in order before proving this result.
	\begin{remark}{~}\medskip
	\begin{itemize}
	\item Compared to the convergence results in \cref{thm:1st_order_convergence} on the first-order system \eqref{eq:eq1}, \cref{thm:th1} not only provides a fast convergence rate of the trajectory and objective value, but also of the gradient. Observe also the $\sqrt{\mu}$ in the rate in \cref{thm:th1} instead of $\mu$ \cref{thm:1st_order_convergence}. This recovers the known result that the system \eqref{eq:sec2_explicit_st_cvx} is faster than \eqref{eq:eq1} for badly conditioned objectives, approaching the optimal rate of the class of strongly convex functions with Lipschitz continuous gradient.
	
	\item Contrary to \cite{Attouch-Fadili-Kungertsev}, no assumption on the integrability of $\e_{\bar{x}}(x(\cdot))$ and $\frac{\dd }{\dd t}{\e}_{\bar{x}}(x(\cdot))$ is needed in \cref{thm:th1}. In fact, thanks to \cref{corollary:1} and \cref{lemma:3}, the norms of these error terms will be absorbed in the right hand side of \eqref{eq:V1}.

	\item In case $\omega = 0$, \ie when the inertial dynamic is considered only with the viscous damping coefficient (which can be view as a perturbed HBF method), the condition \eqref{eq:condition_rho_omega} reduces to $ \rho<\frac{1}{4}$. This contrasts with the convergence condition for first-order dynamics (\cf \cref{thm:1st_order_convergence}), where convergence is ensured in  parameter regime $\rho<1$. One possible explanation for this difference, is the potential occurrence of oscillations typical of inertial system, which may necessitate a stricter compatibility condition between the parameters $\tau,\beta,\mu$.	
		\end{itemize}
		\end{remark}

	\begin{proof}
		 In view of our assumptions, \cref{prop:existencesmooth} applies and we get that $v: [t_0,+\infty[ \to \cH$ is absolutely continuous and thus so is $\V$. We then have
		\begin{equation}
			\begin{aligned}
				\dot{\V}(t) &= \langle\nabla G_{\mm_{\bar{x}}}(x(t)),\dot{x}(t)\rangle + \langle v(t),\sqrt{\mu}\dot{x}(t) + \ddot{x}(t) +\omega\frac{\dd }{\dd t}\left(\nabla  G_{\mm_{\bar{x}}}(x(t))\right) \rangle\\
				& = \langle\nabla G_{\mm_{\bar{x}}}(x(t)),\dot{x}(t)\rangle+\langle v(t),-\sqrt{\mu}\dot{x}(t) - \nabla G_{\mm_{\bar{x}}}(x(t)) - \e_{\bar{x}}(x(t)) -\omega\frac{\dd }{\dd t}\e_{\bar{x}}(x(t))  \rangle ,
			\end{aligned}
		\end{equation}
		 where we used the constitutive equation in \eqref{eq:sec2_explicit_st_cvx}. Replacing $v$ by its expression and rearranging, we arrive at
		\begin{dmath}
				\dot{\V}(t) +\mu \langle\dot{x}(t),x(t)-\bar{x}\rangle +\sqrt{\mu}\Vert\dot{x}(t)\Vert^2 +\sqrt{\mu} \langle \nabla G_{\mm_{\bar{x}}}(x(t)),x(t)-\bar{x}\rangle + \omega\sqrt{\mu}\langle\nabla G_{\mm_{\bar{x}}}(x(t)),\dot{x}(t)\rangle + \omega\Vert\nabla G_{\mm_{\bar{x}}}(x(t))\Vert^2    = -  \langle v(t),\e_{\bar{x}}(x(t))+\omega\frac{\dd }{\dd t}\e_{\bar{x}}(x(t))\rangle.
				\label{eq:der_V}
		\end{dmath}
		Using $\mu$-strong convexity of $G_{\mm_{\bar{x}}}$, we have
		\begin{equation}
			\langle\nabla G_{\mm_{\bar{x}}}(x(t)), x(t) - \bar{x}\rangle \geq G_{\mm_{\bar{x}}}(x(t)) - G_{\mm_{\bar{x}}}^{*} + \frac{\mu}{2}\Vert x(t)-\bar{x}\Vert
			^{2},
		\end{equation}
		and plugging this into \eqref{eq:der_V}, we get
		\begin{equation}\label{eq:V1}
			\dot{\V}(t) + \sqrt{\mu} \Theta(t) \leq  \Vert v(t)\Vert\Vert\e_{\bar{x}}(x(t))+\omega\frac{\dd }{\dd t}\e_{\bar{x}}(x(t))\Vert,
		\end{equation}
		where 
		\begin{multline*}
		\Theta(t) := G_{\mm_{\bar{x}}}(x(t)) - G_{\mm_{\bar{x}}}^{*} + \frac{\mu}{2}\Vert x(t) - \bar{x}\Vert^2 + \sqrt\mu \langle\dot{x}(t),x(t)-\bar{x}\rangle +\Vert\dot{x}(t)\Vert^2 \\
		+ \omega\langle\nabla G_{\mm_{\bar{x}}}(x(t)),\dot{x}(t)\rangle + \frac{\omega}{\sqrt\mu}\Vert\nabla G_{\mm_{\bar{x}}}(x(t))\Vert^2.
		\end{multline*}
		Using the definition of $\V(t)$, we may rewrite $\Theta(t)$ as 
		
		\[
		\Theta(t) = \V(t) +\frac{1}{2}\Vert\dot{x}(t)\Vert^2 -\omega\sqrt{\mu}\langle\nabla G_{\mm_{\bar{x}}}(x(t)),x(t) -\bar{x}\rangle + \left( \frac{\omega}{\sqrt{\mu}} - \frac{\omega^2}{2} \right)\Vert\nabla G_{\mm_{\bar{x}}}(x(t))\Vert^2.
		\]
		Consequently, \eqref{eq:V1} becomes
		\begin{multline}
		\dot{\V}(t) + \sqrt{\mu} \V(t) + \frac{\sqrt{\mu}}{2}\Vert\dot{x}(t)\Vert^2 \\
		+  \sqrt{\mu}\Bigg( \left( \frac{\omega}{\sqrt{\mu}} - \frac{\omega^2}{2} \right)\Vert\nabla G_{\mm_{\bar{x}}}(x(t))\Vert^2 -\omega\sqrt{\mu}\langle\nabla G_{\mm_{\bar{x}}}(x(t)),x(t) -\bar{x}\rangle\Bigg) \\
		\leq \Vert v(t)\Vert\Vert\e_{\bar{x}}(x(t))+\omega\frac{\dd }{\dd t}\e_{\bar{x}}(x(t))\Vert.
		\end{multline}
		Using strong convexity of $G_{\mm_{\bar{x}}}$ again, and discarding the quadratic term in $v(t)$, we have
		\[
		\V(t) = \frac{1}{2}\V(t) + \frac{1}{2}\V(t) \geq \frac{1}{2}\V(t) + \frac{\mu}{4}\Vert x(t) - \bar{x}\Vert^2 .
		\]
		 Observing that $\frac{\omega}{2\sqrt\mu}\leq \frac{\omega}{\sqrt{\mu}} - \frac{\omega^2}{2}$ for $0\leq \omega \leq \frac{1}{\sqrt\mu}$, we end up with
		\begin{multline}
		\dot{\V}(t)+\frac{\sqrt\mu}{2}\V(t) + \frac{\sqrt\mu}{2}\Vert\dot{x}(t)\Vert^2 \\
		+ \sqrt{\mu}\Bigg(  \frac{\mu}{4}\Vert x(t) - \bar{x}\Vert^2 +\frac{\omega}{2\sqrt{\mu}} \Vert\nabla G_{\mm_{\bar{x}}}(x(t))\Vert^2 -\omega\sqrt{\mu}\Vert\nabla G_{\mm_{\bar{x}}}(x(t))\Vert\Vert x(t) -\bar{x}\Vert\Bigg) \\
		\leq \Vert v(t)\Vert\Vert\e_{\bar{x}}(x(t))+\omega\frac{\dd }{\dd t}\e_{\bar{x}}(x(t))\Vert.
		\end{multline}
		 Now let us treat the right hand side of this inequality. Thanks to \cref{corollary:1}, we have $\Vert \e_{\bar{x}}(x(t))\Vert = \Vert \e_{\bar{x}}(x(t)) - \e_{\bar{x}}(\bar{x})\Vert \leq \beta\tau\Vert x(t)-\bar{x}\Vert$, and by \cref{lemma:3} $\Vert\frac{\dd }{\dd t}\e_{\bar{x}}(x(t))\Vert \leq (2L+\beta\tau)\Vert\dot{x}(t)\Vert$. Since $\V(t)\geq \frac{1}{2}\Vert v(t)\Vert^2$, applying Young's inequality yields
		\begin{equation}
		\begin{aligned}
				\Vert v(t)\Vert\Vert\e_{\bar{x}}(x(t))+\omega\frac{\dd }{\dd t}\e_{\bar{x}}(x(t))\Vert &\leq \frac{\sqrt\mu}{8}\Vert  v(t)\Vert^2 + \frac{4}{\sqrt\mu}\Vert\e_{\bar{x}}(x(t))\Vert^2+\frac{4\omega^2}{\sqrt{\mu}}\Vert\frac{\dd }{\dd t}\e_{\bar{x}}(x(t))\Vert^2\\ &\leq \frac{\sqrt\mu}{4}\V(t) + \frac{4\beta^2\tau^2}{\sqrt\mu}\Vert x(t) - \bar{x}\Vert^2+\frac{4\omega^2(2L+\beta\tau)^2}{\sqrt\mu}\Vert \dot{x}(t)\Vert^2 \\
		\end{aligned}
		\end{equation}
		We get after rearranging the terms
		\begin{equation}
		\label{eq:V2}
			\dot{\V}(t) + \frac{\sqrt{\mu}}{4}\V(t) + \left(\frac{\sqrt\mu}{2}-\frac{4\omega^2(2L+\beta\tau)^2}{\sqrt\mu}\right)\Vert\dot{x}(t)\Vert^2 + \sqrt{\mu} \Psi(t) \leq 0,
		\end{equation}

where 
$$\Psi(t) = \left(\frac{\mu}{4} - \frac{4\beta^2\tau^2}{\mu}\right)\Vert x(t) - \bar{x}\Vert^2 + \frac{\omega}{2\sqrt{\mu}} \Vert\nabla G_{\mm_{\bar{x}}}(x(t))\Vert^2 -\omega\sqrt{\mu}\Vert\nabla G_{\mm_{\bar{x}}}(x(t))\Vert\Vert x(t) -\bar{x}\Vert. $$

		Setting 
		\[
		\BF{a} =  \frac{\mu}{4} - \frac{4\beta^2\tau^2}{\mu},  \BF{b} = \frac{\omega}{2\sqrt{\mu}}, \BF{c} = -\frac{\omega\sqrt\mu}{2}		
		\]
		and $X = \Vert x(t) - \bar{x}\Vert$ and $ Y  = \Vert\nabla G_{\mm_{\bar{x}}}(x(t))\Vert$, we see that $\Psi$ can be written as a quadratic form $\Q:\cH\times\cH\to\R$ with $\Q(X,Y) = \BF{a}\Vert X\Vert^2 + 2\BF{c}\langle X,Y\rangle + \BF{b}\Vert Y\Vert^2$. By assumption  $\BF{a,b}\geq 0$, and the discriminant $\BF{c}^2 - \BF{ab}$ of $\Q$ is nonpositive. Indeed, since $0\leq \omega\leq \frac{1}{2\sqrt\mu} $
	\[
	\BF{c}^2 - \BF{ab} = \frac{\omega^2\mu}{4} - \frac{\omega}{2\sqrt{\mu}} \left(\frac{\mu}{4} - \frac{4\beta^2\tau^2}{\mu}\right) \leq   \frac{\omega^2 \sqrt\mu}{8} - \frac{\omega\sqrt\mu}{2} \left(\frac{1}{4} - 4\rho^2\right) =  \frac{\omega \sqrt\mu}{8} (\omega - 1+16\rho^2) <0.
	\]
		
Hence, $\Psi(t) \geq 0$ and since $\omega\leq \frac{\sqrt{\mu}}{2\sqrt{2}(2L+\beta\tau)}$, we get $\left(\frac{\sqrt\mu}{2}-\frac{4\omega^2(2L+\beta\tau)^2}{\sqrt\mu}\right)\geq 0$, and thus
		\[
		\dot{\V}(t) + \frac{\sqrt\mu}{4}\V(t)  \leq 0,
		\]
		which gives after integration
		\begin{equation}\label{eq:estimate_V}
			\V(t)\leq \V(t_0) e^{\frac{-\sqrt\mu}{4}(t-t_0)}.
		\end{equation}
			Therefore, $\lim_{t\to\infty} \V(t) = 0$ and in particular
		\begin{equation}\label{eq:est1}
			\lim_{t\to\infty} G_{\mm_{\bar{x}}}(x(t)) - G_{\mm_{\bar{x}}}^{*}  = 0~~\mbox{and}~~\lim_{t\to\infty}\Vert v(t)\Vert = 0.
				\end{equation}
		This implies, using strong convexity of $G_{\mm_{\bar{x}}}$ 
		\[
		\lim_{t\to\infty}\Vert x(t)-\bar{x}\Vert = 0,%~~\mbox{and}~~\lim_{t\to\infty}\Vert\dot{x}(t)\Vert = 0,
		\]
		which gives that 
		\[
		\lim_{t\to\infty} \Vert\nabla G_{\mm_{\bar{x}}}(x(t))\Vert = 0.
		\]
		We deduce from \eqref{eq:est1} that $\lim_{t\to\infty}\Vert\dot{x}(t)\Vert = 0$.

		Coming back to \eqref{eq:estimate_V}, we have, by definition of $\V$ and $\mu$-strong convexity of $G_{\mm_{\bar{x}}}$, that
		\begin{equation}\label{eq:est2}
		 \frac{\mu}{2}\|x(t)-\bar{x}\|^2 \leq G_{\mm_{\bar{x}}}(x(t)) - G_{\mm_{\bar{x}}}^{*} \leq  \V(t_0) e^{-\frac{\sqrt\mu}{4}(t-t_0)}~~\mbox{and}~~\Vert v(t)\Vert^2 \leq  2\V(t_0)  e^{-\frac{\sqrt\mu}{4}(t-t_0)}.
		\end{equation}
		Developing in \eqref{eq:est2} we have
		\begin{dmath}\label{eq:est3}
		\mu\Vert x(t) - \bar{x}\Vert^2 + \Vert\dot{x}(t)\Vert^2 + \omega^2\Vert\nabla G_{\mm_{\bar{x}}}(x(t))\Vert^2 + 2\omega\sqrt{\mu}\langle\nabla G_{\mm_{\bar{x}}}(x(t)), x(t) - \bar{x}\rangle \\+ 2\omega\langle\nabla G_{\mm_{\bar{x}}}(x(t)),\dot{x}(t)\rangle+2\sqrt{\mu}\langle\dot{x}(t),x(t)-\bar{x}\rangle \leq C e^{-\frac{\sqrt\mu}{4}t},
		\end{dmath}
		where  $C =  2\V(t_0) e^{\frac{\sqrt{\mu}}{4}t_0}.$ 
		
		Since $\langle\nabla G_{\mm_{\bar{x}}}(x(t)), x(t) - \bar{x}\rangle \geq G_{\mm_{\bar{x}}}(x(t)) - G_{\mm_{\bar{x}}}^{*}$, we deduce from \eqref{eq:est3} that
		\begin{equation}\label{eq:eq_U}
		\dot{U}(t)+\sqrt\mu U(t) + \omega^2\Vert\nabla G_{\mm_{\bar{x}}}(x(t))\Vert^2\leq  C e^{-\frac{\sqrt{\mu}}{4}t},
		\end{equation}
		where $U(t):=\sqrt{\mu}\Vert x(t)-\bar{x}\Vert^2 + 2\omega\left(G_{\mm_{\bar{x}}}(x(t)) - G_{\mm_{\bar{x}}}^{*}\right)$.
Integrating \eqref{eq:eq_U}, we obtain after elementary computation
\[
e^{-\sqrt\mu t}\int_{t_0}^{t} e^{\sqrt{\mu}s}\Vert\nabla G_{\mm_{\bar{x}}}(x(s))\Vert^2 \dd s \leq  C_{1} e^{-\frac{\sqrt{\mu}}{4}t},
\]
	as desired.
	\end{proof}
	
	We end this section with a similar result to \cref{cor:W1_rate1}, which is a direct consequence of \cref{assump:2} and \cref{thm:th1}\cref{it:hd-it1}.
	\begin{corollary}\label{cor:W1_rate2}
		Let $x:[t_0,+\infty[\to\cH$ be the solution of \eqref{eq:sec2_explicit_st_cvx} where \eqref{eq:condition_rho_omega} holds. Then $\forall t\geq t_0$ 
		\[
		\Wass_{1}(\mm_{x(t)},\mm_{\bar{x}}) \leq  \tau\sqrt{\frac{2}{\mu}\V(t_0)} e^{-\frac{\sqrt{\mu}}{8} (t-t_0)}. 
		\]
	\end{corollary}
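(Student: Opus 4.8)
The plan is to chain the Lipschitz bound on the family $(\mm_x)_{x\in\cH}$ with the exponential decay of the trajectory established in \cref{thm:th1}. First, by \cref{assump:2} applied to the pair $(x(t),\bar{x})$, we have $\Wass_{1}(\mm_{x(t)},\mm_{\bar{x}}) \leq \tau\Vert x(t) - \bar{x}\Vert$ for every $t\geq t_0$, so it suffices to bound $\Vert x(t)-\bar{x}\Vert$.

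Since \eqref{eq:condition_rho_omega} is assumed, \cref{thm:th1}\cref{it:hd-it1} applies and yields in particular
\[
\frac{\mu}{2}\Vert x(t)-\bar{x}\Vert^{2} \leq G_{\mm_{\bar{x}}}(x(t)) - G^{*}_{\mm_{\bar{x}}} \leq \V(t_0)\, e^{-\frac{\sqrt\mu}{4}(t-t_0)} .
\]
Multiplying by $2/\mu$ and taking square roots gives $\Vert x(t)-\bar{x}\Vert \leq \sqrt{\tfrac{2}{\mu}\V(t_0)}\, e^{-\frac{\sqrt\mu}{8}(t-t_0)}$, where the decay exponent $\sqrt\mu/4$ on the squared quantity becomes $\sqrt\mu/8$ after the square root. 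Substituting this into the Wasserstein estimate of the previous paragraph produces exactly the announced inequality.

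There is essentially no obstacle here: the statement is a one-line consequence of the two results already in hand, and the only subtlety worth flagging is the halving of the exponential rate when passing from $\Vert x(t)-\bar{x}\Vert^{2}$ to $\Vert x(t)-\bar{x}\Vert$, which accounts for the constant $\sqrt\mu/8$ appearing in the exponent.
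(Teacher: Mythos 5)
Your proof is correct and is exactly the argument the paper intends: \cref{cor:W1_rate2} is stated there as a direct consequence of \cref{assump:2} combined with the bound $\frac{\mu}{2}\Vert x(t)-\bar{x}\Vert^2 \leq \V(t_0)e^{-\frac{\sqrt\mu}{4}(t-t_0)}$ from \cref{thm:th1}\cref{it:hd-it1}, with the same halving of the exponent upon taking square roots.
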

%-----------------------------------------------------------------------
%% Section 3
%-----------------------------------------------------------------------
\section{On coarse Ricci curvature}\label{section:ricci}
		In this section we discuss  some dynamical and geometrical properties of the family $(\mm_x)_{x \in \cH}$, particularly the notion of Ollivier-Ricci curvature and how it it tightly related to \cref{assump:2}. In fact, the family of probabilities $\mm = (\mm_{x})_{x\in\cH}$ and its Lipschitz behavior with respect to the $\Wass_1$-Wasserstein distance, reveals that a natural setting to address monotone inclusions of the form \eqref{eq:eq0}, and thus stochastic optimization problems with decision-dependent distributions is the framework of \textit{metric random walk spaces} (see, \eg \cite{OllivierY,mazon2023variational}). 	All definitions of this section can be found in \cite{mazon2023variational,HernandezLasserre}.
 
		\subsection{Metric random walk spaces}
	
	 Before going further, let us recall the following definitions to introduce a couple of probabilistic notions.
	\begin{definition}[Random walks \cite{OllivierY}]\label{def:RW}
		Given a Polish space $(X,\dd)$. A family of probabilities $\mm = (\mm_{x})_{x\in X}$ is a random walk on $X$ if $\mm_{x}\in\Pro$ for each $x\in X$ and
		\begin{itemize}
			\item $\mm_{x}$ depends measurably on $x\in X$,
			\item Each $\mm_{x}$ has finite first-order moment, \ie for some $x^{o}\in X$, $\Ex_{y\sim \mm_x} \dd( y,x^{o}) <\infty$.
		\end{itemize}
		Then $(X,\dd)$ equipped with a random walk $\mm$ is a metric random walk space (m.r.w.s for short), and we denote it by $[X,\dd,\mm]$.
	\end{definition}
	Let us recall the notion of invariant and ergodic measures.
	
	\begin{definition}[Invariance]
	Let $\nu$ be a $\sigma$-finite measure on $X$ and $\mm$ a random walk on $(X,\cB)$. We say that  $\nu$ is invariant with respect to $\mm$ if $\nu\star\mm = \nu$, where $\nu\star\mm$ is the convolution of $\nu$ by the random walk $\mm$ and is defined by
		\[
		\nu\star\mm(A) = \int_{X} \mm_{x}(A) \dd\nu(x)~\mbox{for all}~A\in\cB.
		\]
	\end{definition}

		As pointed out in \cite{OllivierY}, each measure $\mm_x$ can be seen as a replacement of a sphere around $x$. While in a probabilistic framework one think about a Markov chain whose transition kernel from $x$ to $y$ in $n$ steps is defined by
		\begin{equation}\label{eq:n-step}
		\dd\mm_{x}^{*n}(y) = \int_{z\in X} \dd\mm_{x}^{*(n-1)}(z)\dd\mm_{z}(y),
		\end{equation}
		with $\mm_{x}^{1} = \mm_x$ and $\mm_{x}^{0} = \delta_x$.\\

	In the sequel, we assume that $(\cH,\dd)$ is a separable real Hilbert space, and thus a Polish space, where $\dd(x,y) = \langle x-y,x-y\rangle^{1/2}$.		
		\subsection{Feller Property} Let us recall the following definition.
	\begin{definition}
		 We say that $\mm := (\mm_x)_x$ has the weak-Feller property if and only if for every sequence $x_n\to x^0\in \cH$ we have $\mm_{x_n}\rightharpoonup \mm_{x^{o}}$, \ie $\int f \dd\mm_{x_n}\to \int f \dd \mm_{x^{o}}$ for any $f\in C_{b}(\cH).$
	\end{definition}
It turns that \cref{assump:2} implies directly that the family $\mm$ is  weak-Feller.
	\begin{proposition}\label{prop:feller-property}
		Under \cref{assump:2}, $\mm$ has the weak-Feller property. Moreover, for each $x\in \cH$, $\mm_x$ has finite first-order moment.
	\end{proposition}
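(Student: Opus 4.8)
The plan is to obtain both assertions from the single quantitative input of \cref{assump:2}, $\Wass_{1}(\mm_{x},\mm_{y})\le\tau\Vert x-y\Vert$, together with the Kantorovich--Rubinstein form \eqref{eq:Wasserstein1} of $\Wass_{1}$, which by definition controls the oscillation of any Lipschitz test function between two measures. For the weak-Feller property, let $x_{n}\to x^{o}$ in $\cH$; then \cref{assump:2} gives $\Wass_{1}(\mm_{x_{n}},\mm_{x^{o}})\le\tau\Vert x_{n}-x^{o}\Vert\to 0$. First I would test against bounded Lipschitz functions: if $h\colon\Xi\to\R$ is bounded and $K$-Lipschitz then $h/\max(K,1)\in\Lip_{1}$, so \eqref{eq:Wasserstein1} yields $\bigl|\Ex_{\xi\sim\mm_{x_{n}}}h(\xi)-\Ex_{\xi\sim\mm_{x^{o}}}h(\xi)\bigr|\le\max(K,1)\,\Wass_{1}(\mm_{x_{n}},\mm_{x^{o}})\to 0$. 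Since $\Xi$ is a separable metric space, convergence of the integrals of \emph{all} bounded Lipschitz functions is equivalent, by the Portmanteau theorem, to weak convergence $\mm_{x_{n}}\rightharpoonup\mm_{x^{o}}$, \ie $\int f\,\dd\mm_{x_{n}}\to\int f\,\dd\mm_{x^{o}}$ for every $f\in C_{b}(\Xi)$; equivalently, one may simply invoke the classical fact that convergence in $\Wass_{1}$ implies weak convergence (see, \eg \cite{mazon2023variational}). This is exactly the weak-Feller property.

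For the finiteness of the first moment, fix $\xi^{o}\in\Xi$ and a reference point $x_{0}\in\cH$. The measure $\mm_{x_{0}}$ has finite first-order moment — this is part of the standing setting on the family $(\mm_{x})_{x}$, used implicitly throughout (in particular for the Bochner integrals $B_{\mm_{x}}(x)=\Ex_{\xi\sim\mm_{x}}(B(x,\xi))$ of \eqref{eq:f_mu} to be well defined). The map $\xi\mapsto\dd(\xi,\xi^{o})$ belongs to $\Lip_{1}$ and hence figures among the test functions in \eqref{eq:Wasserstein1}, so, for every $x\in\cH$,
\[
\Ex_{\xi\sim\mm_{x}}\dd(\xi,\xi^{o})\;\le\;\Ex_{\xi\sim\mm_{x_{0}}}\dd(\xi,\xi^{o})+\Wass_{1}(\mm_{x},\mm_{x_{0}})\;\le\;\Ex_{\xi\sim\mm_{x_{0}}}\dd(\xi,\xi^{o})+\tau\Vert x-x_{0}\Vert\;<\;\infty ,
\]
which is the claim; to treat the a priori possibly infinite left-hand side rigorously, one first applies the bound to $\dd(\cdot,\xi^{o})\wedge n\in\Lip_{1}$ and then lets $n\to\infty$ by monotone convergence.

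The only step that really requires care is the passage, in the first part, from convergence against bounded Lipschitz functions to convergence against all of $C_{b}(\Xi)$: because $\Xi$ is merely Polish, hence not locally compact in general, an arbitrary bounded continuous function need not be uniformly approximable by Lipschitz ones, so this step leans on the Portmanteau theorem — equivalently, on the fact that the bounded-Lipschitz (Fortet--Mourier) metric is dominated by $\Wass_{1}$ and metrizes weak convergence on separable metric spaces. Everything else reduces immediately to the duality formula \eqref{eq:Wasserstein1} and the triangle inequality for $\dd$.
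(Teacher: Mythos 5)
Your proof is correct, but it follows a genuinely different route from the paper's. The paper disposes of both claims with a single citation to \cite[Proposition~7.1.5]{AGS}: $\Wass_1$-convergence of $(\mm_{x_n})$ to $\mm_{x^o}$ is equivalent to narrow convergence together with uniformly integrable first moments, which yields the weak-Feller property and the moment bound simultaneously. You instead give a self-contained two-part argument: Kantorovich--Rubinstein duality \eqref{eq:Wasserstein1} to get convergence against bounded Lipschitz test functions, then Portmanteau to upgrade to all of $C_b(\Xi)$; and separately the transport bound $\Ex_{\xi\sim\mm_x}\dd(\xi,\xi^o)\le\Ex_{\xi\sim\mm_{x_0}}\dd(\xi,\xi^o)+\tau\Vert x-x_0\Vert$, made rigorous by truncating $\dd(\cdot,\xi^o)$ and passing to the limit by monotone convergence. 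What your version buys is transparency: it isolates exactly which standard facts are used and, importantly, makes explicit that the moment claim is a \emph{propagation} statement --- it requires some anchor measure $\mm_{x_0}$ with finite first moment, since \cref{assump:2} alone cannot rule out, say, a constant family of heavy-tailed measures (for which $\Wass_1(\mm_x,\mm_y)=0$ in the dual sense yet no first moment exists). The paper's proof carries the same implicit reliance, since \cite[Proposition~7.1.5]{AGS} is stated for measures already lying in $\mathcal{P}_1$, but does not flag it; your acknowledgment of the anchor is therefore a point in your favour rather than a gap. The paper's route is shorter and also delivers uniform integrability of the moments along the sequence, which your argument does not claim but which the proposition does not require either.
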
	
		\begin{proof}
		Let $x^{o}\in\cH$ and $(x_n)_n$ a sequence of $\cH$ such that $x_n \to x^{o}$ as $n\to 0.$ Then \cref{assump:2} gives
		\[
		\Wass_{1}(\mm_{x_n},\mm_{x^{o}}) \leq \tau \Vert x_n - x^{o}\Vert,
		\] 
	and thus  $\lim_{n\to\infty}\Wass_{1}(\mm_{x_n},\mm_{x^{o}}) = 0$. Thanks to \cite[Proposition 7.1.5]{AGS}, $(\mm_{x_n})$ has uniformly integrable $p$-moments with $p\geq1$ and narrowly converges towards $ \mm_{x^{o}}$. In particular $\mm$ is weak-Feller and  each $\mm_x$ has finite first-order moments.
	\end{proof}
	\begin{remark}
		 We already know that $\mm_{x}\in\Pro$ for each $x\in\cH$ and that $x\mapsto\mm_{x}(C)$ is measurable for each $C\in\mathcal{B}$. Moreover, thanks to \cref{prop:feller-property}, we have finiteness of first-order moments of each $\mm_{x}$, so that the family $\mm$ satisfies the requirements of \cref{def:RW}. This shows that a natural setting to address dynamics of the form \eqref{eq:eq0} is the metric random walk space $[\cH,\dd,\mm]$. 
		 Many diffusion and variational problems has been studies within this framework, with allows in particular consider nonlocal continuum problems or  problems on weighted graphs (see, \eg \cite{mazon2023variational} and the references therein).
	\end{remark}
	
	\begin{remark}
	Let us point out that if $\upsilon$ is an invariant measure with respect to $\mm$ then it is also and invariant measure with respect to $\mm^{*n}$ for every $n\in\N$, where $\mm^{*n}$ is the $n$-step transition probability function given by \eqref{eq:n-step}. It turns out that weak-Feller property implies that every weak$-*$ limit $\upsilon$ of $(\mm^{*n})_{n}$ is an invariant measure of $\mm$ \cf \cite[Proposition 7.2.2]{HernandezLasserre} (see also \cite[Proposition 12.3.4]{Douc&al}). However, without assuming at first the existence of an invariant measure with respect to $\mm$, the measure $\upsilon$ may be trivial. Without further compactness assumptions on the metric space (see, \eg \cite[Theorem 7.2.3]{HernandezLasserre}) one needs some Lyapunov like condition to ensure the existence of an invariant measure $\upsilon$ of the weak-Feller family $\mm$ (see, \eg \cite[Theorem 7.2.4]{HernandezLasserre} or \cite[Theorem 12.3.3]{Douc&al}). As we will see in \cref{cor:inv-measure-OR}, another way to obtain the existence of invariant measures is having a positive lower bound on the coarse Ricci curvature of $[\cH,\dd,\mm]$.  
		\end{remark}
	
	\subsection{Ollivier-Ricci curvature}
	Let us discuss here the connexion between \cref{assump:2} and the so-called coarse or Olliver-Ricci curvature (\ORC~for short). The results can be found in \cite{OllivierY} or \cite{ollivier2010survey}. A more recent presentation can be found in \cite{mazon2023variational}.
		\begin{definition}[Ollivier-Ricci curvature \cite{OllivierY}]
		Let $[\cH,\dd,\mm]$ be a m.r.w.s. Then, for any distinct points $x,y\in\cH$, the \ORC~along $(x,y)$ is defined as:
		\begin{equation}\label{eq:OR}
			\kappa_{\mm}(x,y) = 1 - \frac{\Wass_1(\mm_x,\mm_{y})}{\dd(x,y)},
		\end{equation}
The \ORC~of $[\cH,\dd,\mm]$ is defined as
		\begin{equation}\label{eq:OR-Curv}
			\kappa_{\mm} := \inf_{x\neq y} \kappa_{\mm}(x,y).
		\end{equation} 
	\end{definition}
	
	We clearly see from \eqref{eq:OR} that $\kappa_\mm(x,y)\leq 1$. Moreover, rearranging the terms, we have 
	\begin{equation}\label{eq:W1_bound1}	
	\Wass_1(\mm_x,\mm_{y}) = \left(1-\kappa_{\mm}(x,y)\right)\dd(x,y).
	\end{equation}
	Consequently, having some lower bound $\kappa_{\mm}(x,y)\geq c \in\R$ for any $x,y\in\cH$ gives 
		\begin{equation}\label{eq:W1_bound2}		
\Wass_{1}(\mm_{x},\mm_{y})\leq (1-c)\dd(x,y),
		\end{equation}
	which describes a Lipschitz behavior of the random walk $\mm$. This has to be compared to \cref{assump:2}. Indeed, we see from \cref{assump:2} that, for $x\neq y$
	\begin{equation}\label{eq:bounds_OR}
 1-\tau\leq  \kappa_{\mm}(x,y)\leq 1,
	\end{equation}
	so according to the values of $\tau$ we have different regimes on the \ORC $~\kappa_\mm$ (\cf \cref{tab:curvature}). 
	\begin{table}[ht]
\centering
\caption{Relation between the values of $\tau$ and $\kappa_\mm$.}
\begin{tabular}[t]{|c|c|c|c|}
\hline
Values of $\tau$ &$0$  &$<1$&$\leq 1$ \\
\hline
Values of $\kappa_\mm$&$1$&$]0,1]$&$[0,1]$\\
\hline
\end{tabular}
\label{tab:curvature}
\end{table}%
	\begin{figure}
		\centering
		\includegraphics[width=0.4\textwidth]{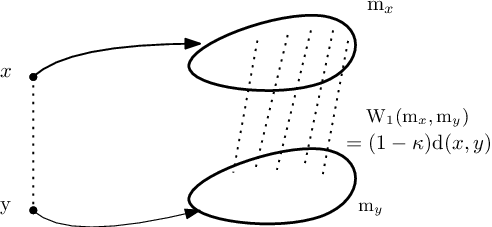}
		\caption{Illustration of the \ORC.}
		\label{fig:curvature}
	\end{figure}
	Notice that \cref{assump:2} excludes both the cases $\kappa_\mm\equiv 1$ and $\kappa_\mm<0$. Typically, $\tau = 0$, would give that $\kappa_\mm=1$ in other words $\Wass_{1}(\mm_x,\mm_y) = 0$ for any $x,y\in\cH$, \ie the distribution $\mm$ is constant.

	Moreover, it turns that there is equivalence between the lower bound on $\kappa_\mm$ in \eqref{eq:bounds_OR} and the Lipschitz behavior \eqref{eq:W1_bound2}. This is directly related to a $\Wass_1$-contraction property  \cf \cite[Proposition 20]{OllivierY}.
	\begin{proposition}\label{prop:contraction}
		Let $\mm$ be a random walk on $(\cH, \dd)$ and assume that $\mm_x$ has finite moment for all $x\in\cH$. Then
		\[
		\kappa_{\mm}(x,y)\geq c\in \R,~\forall x\neq y \iff \Wass_{1}(\nu_1\star\mm,\nu_2\star\mm)\leq (1-c)\Wass_{1}(\nu_1,\nu_2)~\forall\nu_1,\nu_2\in\Probm.
		\]
	\end{proposition}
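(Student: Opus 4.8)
The plan is to prove the two implications separately, the reverse being a one-line specialization and the forward one resting on the dual description of $\Wass_1$ already used in \eqref{eq:Wasserstein1}. For the implication ``$\Leftarrow$'', I would specialize the assumed $\Wass_1$-contraction to Dirac masses: taking $\nu_1=\delta_x$ and $\nu_2=\delta_y$ with $x\neq y$, one has $\delta_x\star\mm=\mm_x$ and $\delta_y\star\mm=\mm_y$ straight from the definition of convolution, while $\Wass_1(\delta_x,\delta_y)=\dd(x,y)$ (immediate from the dual characterization of $\Wass_1$, the supremum being attained at $h=\dd(\cdot,y)$). Hence $\Wass_1(\mm_x,\mm_y)\le(1-c)\dd(x,y)$, which after division by $\dd(x,y)$ is exactly $\kappa_\mm(x,y)\ge c$ by \eqref{eq:OR}.

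For the implication ``$\Rightarrow$'', note first that $\kappa_\mm(x,y)\ge c$ for all $x\neq y$ is, by \eqref{eq:OR}, the same as $\Wass_1(\mm_x,\mm_y)\le(1-c)\dd(x,y)$ for all $x,y\in\cH$ (the case $x=y$ being trivial), \ie \eqref{eq:W1_bound2}. I would then introduce the averaging operator $Pf(x):=\Ex_{\xi\sim\mm_x}f(\xi)$ acting on $f\in\Lip_1$; it is finite because $\mm_x$ has finite first-order moment and $|f|$ grows at most linearly, and it is measurable in $x$ by the measurable dependence $x\mapsto\mm_x$ built into \cref{def:RW}. The key step is the one-line estimate that $Pf$ is $(1-c)$-Lipschitz: for all $x,y\in\cH$,
\[
|Pf(x)-Pf(y)|=\left|\Ex_{\xi\sim\mm_x}f(\xi)-\Ex_{\zeta\sim\mm_y}f(\zeta)\right|\le\Wass_1(\mm_x,\mm_y)\le(1-c)\dd(x,y),
\]
the first inequality being the definition \eqref{eq:Wasserstein1} of $\Wass_1$. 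Next, for $\nu_1,\nu_2\in\Probm$, the definition of $\nu_i\star\mm$ gives $\int f\,\dd(\nu_i\star\mm)=\int Pf\,\dd\nu_i$ for every $f\in\Lip_1$; the integrability needed here, namely that $\nu_i\star\mm$ has finite first-order moment, follows from $\nu_i\in\Probm$ together with the triangle estimate $\Ex_{\xi\sim\mm_x}\dd(\xi,x^o)\le\Wass_1(\mm_x,\mm_{x^o})+\Ex_{\xi\sim\mm_{x^o}}\dd(\xi,x^o)\le(1-c)\dd(x,x^o)+\text{const}$. Assuming $c<1$ and writing $Pf=(1-c)g$ with $g\in\Lip_1$, the dual formula then yields
\[
\Wass_1(\nu_1\star\mm,\nu_2\star\mm)=\sup_{f\in\Lip_1}\left(\int Pf\,\dd\nu_1-\int Pf\,\dd\nu_2\right)\le(1-c)\sup_{g\in\Lip_1}\left(\int g\,\dd\nu_1-\int g\,\dd\nu_2\right)=(1-c)\Wass_1(\nu_1,\nu_2),
\]
while the degenerate cases are immediate: $c=1$ forces $\mm_x\equiv\mm_{x^o}$, so both sides vanish, and $c>1$ is incompatible with $\kappa_\mm\le1$ unless $\cH$ reduces to a single point.

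I expect the only genuine friction to be the measure-theoretic bookkeeping highlighted above — the measurability of $x\mapsto Pf(x)$ and the finiteness of the first-order moments of $\nu_i\star\mm$ — which is exactly what guarantees that every integral and every occurrence of $\Wass_1$ is meaningful. A purely coupling-based route is also possible (glue an optimal plan between $\nu_1$ and $\nu_2$ with a measurably chosen family of optimal plans between $\mm_x$ and $\mm_y$, then estimate the cost), but it forces one to invoke a measurable selection theorem for optimal couplings, which the dual argument neatly sidesteps.
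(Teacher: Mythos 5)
Your proof is correct. The paper does not actually prove this proposition --- it is quoted directly from \cite[Proposition~20]{OllivierY} --- and your argument is precisely the standard one from that reference: the ``$\Leftarrow$'' direction by specializing to Dirac masses, and the ``$\Rightarrow$'' direction via Kantorovich duality, showing that the averaging operator $Pf(x)=\Ex_{\xi\sim\mm_x}f(\xi)$ maps $\Lip_1$ into $(1-c)\Lip_1$ and then transferring this through $\int f\,\dd(\nu_i\star\mm)=\int Pf\,\dd\nu_i$. You have also correctly identified and handled the only points requiring care (measurability of $x\mapsto Pf(x)$, finiteness of first moments of $\nu_i\star\mm$, and the degenerate cases $c\geq 1$), so there is nothing to add.
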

	In view of \cref{prop:contraction}, taking $\nu_1 = \delta_x$ and $\nu_2=\delta_y$ for $x\neq y$, and $c = 1-\tau$, we get 
	\[
	\Wass_{1}(\mm_x,\mm_y) = \Wass_{1}(\delta_x\star\mm,\delta_y\star\mm) \leq \tau \Wass(\delta_x,\delta_y) = \tau \dd(x,y)
	\]
	which is exactly \cref{assump:2} since the above inequality is trivial for $x=y$.\\
	
	In the case of positive curvature, this contraction result implies the existence of a unique invariant measure for the random walk $\mm$ when the \ORC~is positive.
	\begin{corollary}[\cite{mazon2023variational}]\label{cor:postive-ORC}
		Assume that $\kappa_{\mm}(x,y)\geq c >0$ for all $x\neq y$. Then, the random walk $\mm$ has a unique invariant measure $\upsilon\in\Probm.$ Moreover, for any $\nu\in\Probm$
		\begin{enumerate}
			\item $\Wass_{1}(\nu\star\mm^{*n},\upsilon)\leq (1-c)^{n}\Wass_{1}(\nu,\upsilon)$, $\forall n\in\N$.
			\item $\Wass_{1}(\mm^{*n},\upsilon)\leq \frac{(1-c)^{n}}{c}\Wass_{1}(\delta_x,\upsilon)$, $\forall n\in\N, \forall x\in \cH$.
		\end{enumerate}
	\end{corollary}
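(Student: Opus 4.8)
The plan is to observe that the hypothesis $\kappa_{\mm}(x,y)\geq c>0$ is exactly \cref{assump:2} with constant $\tau=1-c<1$ (rearrange \eqref{eq:OR} as in \eqref{eq:W1_bound2}), and then to run the Banach fixed-point theorem on the convolution operator $T\colon\nu\mapsto\nu\star\mm$ regarded as a self-map of the complete metric space $(\Probm,\Wass_1)$. The $\Wass_1$-contraction property of $T$ is already in hand: it is precisely \cref{prop:contraction}.

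First I would check that $T$ is a well-defined strict contraction of $(\Probm,\Wass_1)$. From $\Wass_1(\mm_x,\mm_y)\leq(1-c)\dd(x,y)$ and a triangle inequality around a fixed reference point $x^o\in\cH$, one gets $\Ex_{y\sim\mm_x}\dd(y,x^o)\leq \Ex_{y\sim\mm_{x^o}}\dd(y,x^o)+(1-c)\dd(x,x^o)<\infty$, so $\mm$ is a bona fide random walk in the sense of \cref{def:RW} with finite first moments; applying the same estimate to $\nu\star\mm(A)=\int_{\cH}\mm_z(A)\,\dd\nu(z)$ shows $\nu\in\Probm\Rightarrow\nu\star\mm\in\Probm$, i.e. $T$ maps $\Probm$ into itself. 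Since $(\cH,\dd)$ is a separable Hilbert, hence Polish, space, $(\Probm,\Wass_1)$ is complete (see \cite{AGS}). By \cref{prop:contraction}, $\Wass_1(T\nu_1,T\nu_2)=\Wass_1(\nu_1\star\mm,\nu_2\star\mm)\leq(1-c)\Wass_1(\nu_1,\nu_2)$ for all $\nu_1,\nu_2\in\Probm$, and $1-c\in[0,1)$. The Banach fixed-point theorem (\cref{thm:fixedpoint}) then yields a unique $\upsilon\in\Probm$ with $T\upsilon=\upsilon$; and $T\nu=\nu$ is exactly the invariance relation $\nu\star\mm=\nu$, so $\upsilon$ is the unique invariant measure of $\mm$ in $\Probm$.

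For the rates, I would use that $\nu\star\mm^{*n}=T^{n}\nu$ for every $\nu\in\Probm$ and $n\in\N$; this is just the associativity of $\star$, which follows from Fubini's theorem together with the definition \eqref{eq:n-step} of the $n$-step transition. Since $T$ is $(1-c)$-Lipschitz, $T^{n}$ is $(1-c)^{n}$-Lipschitz, and $T^{n}\upsilon=\upsilon$, so
\[
\Wass_1(\nu\star\mm^{*n},\upsilon)=\Wass_1(T^{n}\nu,T^{n}\upsilon)\leq(1-c)^{n}\Wass_1(\nu,\upsilon),
\]
which is item (1). For item (2), take $\nu=\delta_x$ (which lies in $\Probm$ since $\Ex_{y\sim\delta_x}\dd(y,x^o)=\dd(x,x^o)<\infty$) and note $\delta_x\star\mm^{*n}=\mm_x^{*n}$; item (1) then gives $\Wass_1(\mm_x^{*n},\upsilon)\leq(1-c)^{n}\Wass_1(\delta_x,\upsilon)\leq\tfrac{(1-c)^{n}}{c}\Wass_1(\delta_x,\upsilon)$ because $c\in(0,1]$. (Equivalently, one can derive (2) directly by telescoping, $\Wass_1(\mm_x^{*n},\upsilon)\leq\sum_{k\geq n}\Wass_1(\mm_x^{*k},\mm_x^{*(k+1)})\leq\sum_{k\geq n}(1-c)^{k}\Wass_1(\delta_x,\mm_x)=\tfrac{(1-c)^{n}}{c}\Wass_1(\delta_x,\mm_x)$.)

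The heart of the argument — the $\Wass_1$-contraction — is supplied by \cref{prop:contraction}, so the only genuinely delicate points are the measure-theoretic bookkeeping: verifying that $T$ stabilizes $\Probm$ (propagation of finite first moments under convolution) and that $(\Probm,\Wass_1)$ is complete, since \cref{prop:contraction} only provides the contraction estimate between elements of $\Probm$ and the fixed-point theorem must be applied within that space. Everything else is mechanical.
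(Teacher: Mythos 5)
Your proof is correct, and it is the standard argument: the paper itself states \cref{cor:postive-ORC} without proof, citing \cite{mazon2023variational}, and the proof there (going back to Ollivier) is exactly your route — check that $\nu\mapsto\nu\star\mm$ preserves finite first moments, invoke completeness of $(\Probm,\Wass_1)$, and apply the Banach fixed-point theorem using the contraction estimate of \cref{prop:contraction}, with the rates following by iterating the contraction. The only cosmetic remark is that your parenthetical telescoping alternative for item (2) yields $\frac{(1-c)^n}{c}\Wass_1(\delta_x,\mm_x)$ rather than the stated $\frac{(1-c)^n}{c}\Wass_1(\delta_x,\upsilon)$, but your primary derivation via item (1) already gives the stated bound (indeed without the $1/c$ factor), so nothing is missing.
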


	In our setting, the parameter regime $\tau < 1$ would define a positive \ORC. Consequently, we have the following
	\begin{corollary}\label{cor:inv-measure-OR}
		Assume that $\mm$ satisfies \cref{assump:2} with $\tau <1$. Then, there exists a unique invariant measure $\upsilon\in\Probm$ with respect to $\mm$.
	\end{corollary}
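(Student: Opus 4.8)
The plan is to deduce this directly from \cref{cor:postive-ORC}, for which the only work is to translate \cref{assump:2} into a positive lower bound on the Ollivier--Ricci curvature. First I would observe that the hypotheses of \cref{cor:postive-ORC} are met: by \cref{prop:feller-property}, \cref{assump:2} guarantees that each $\mm_x$ has finite first-order moment, and together with the measurability of $x\mapsto\mm_x(C)$ recalled just after \cref{assump:1}, the family $\mm=(\mm_x)_{x\in\cH}$ is a genuine random walk on the Polish space $(\cH,\dd)$ in the sense of \cref{def:RW}.

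Next I would invoke the dictionary between $\tau$ and $\kappa_\mm$ established in \eqref{eq:bounds_OR}: from $\Wass_1(\mm_x,\mm_y)\leq\tau\Vert x-y\Vert$ and the definition \eqref{eq:OR} we get, for every pair of distinct points $x\neq y$, the bound $\kappa_\mm(x,y)=1-\Wass_1(\mm_x,\mm_y)/\dd(x,y)\geq 1-\tau$. Since $\tau<1$ by assumption, the constant $c:=1-\tau$ is strictly positive, so $\kappa_\mm(x,y)\geq c>0$ for all $x\neq y$, which is precisely the curvature hypothesis of \cref{cor:postive-ORC}.

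Applying \cref{cor:postive-ORC} with this $c$ then yields at once the existence of a unique invariant measure $\upsilon\in\Probm$ for the random walk $\mm$, and as a bonus the geometric convergence estimates $\Wass_1(\nu\star\mm^{*n},\upsilon)\leq\tau^{n}\Wass_1(\nu,\upsilon)$ and $\Wass_1(\mm^{*n},\upsilon)\leq\tau^{-1}\tau^{n}\Wass_1(\delta_x,\upsilon)$ for every $\nu\in\Probm$, $x\in\cH$ and $n\in\N$. I do not anticipate any real obstacle here: the argument is essentially a bookkeeping step. The only point deserving a word of care is that the invariant measure is claimed to lie in $\Probm$, i.e.\ to have finite first moment, rather than being merely a probability measure; but this is already part of the conclusion of \cref{cor:postive-ORC}, whose proof (see \cite{mazon2023variational}) rests on the $\Wass_1$-contraction of \cref{prop:contraction} applied in the complete metric space $(\Probm,\Wass_1)$, so no additional verification is needed on our side.
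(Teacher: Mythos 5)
Your argument is correct and is exactly the route the paper intends: \cref{assump:2} with $\tau<1$ gives $\kappa_\mm(x,y)\geq 1-\tau>0$ via \eqref{eq:bounds_OR}, and \cref{cor:postive-ORC} (whose random-walk hypotheses you rightly check through \cref{prop:feller-property}) then delivers the unique invariant measure in $\Probm$; the paper offers no separate proof beyond this same ``consequently''. The only slip is in your incidental bonus: with $c=1-\tau$ the second rate in \cref{cor:postive-ORC} is $\frac{\tau^{n}}{1-\tau}\Wass_{1}(\delta_x,\upsilon)$ rather than $\tau^{-1}\tau^{n}\Wass_{1}(\delta_x,\upsilon)$, but this does not affect the statement being proved.
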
 
%-----------------------------------------------------------------------
%% Application
%-----------------------------------------------------------------------

	\section{Application: Inertial primal-dual algorithm}\label{section:application}
	 
	This section is devoted to the application of the developed results in \cref{section:firstorder} and \cref{section:hessian2ndorder} to the following class of saddle-point problems
	\begin{equation}\label{eq:application}
			\inf_{x \in \cH}\sup_{y \in \cK}~\F_{\mm^p_x}(x) + \g(x) + \langle y, \kk x \rangle - \hh(y) - \rr_{\mm^d_y} (y) ,
	\end{equation}

	where $\kk: \cH \to \cK$ is a bounded linear operator, $\cK$ is a real Hilbert space, $(\mm^{p}_{x})_{x\in\cH}$ and $(\mm^{d}_{y})_{y\in\cK}$ are two families of probability measures on $\Xi$ and $\cZ$ respectively. Recall that $\Xi$ and $\cZ$ are two Polish spaces. We equip $\cH \times \cK$ with the product space inner product and associated norlm.
	In what follows, we make the following assumptions:
	\begin{assumption}\label{assump:6}~\medskip
		\begin{enumerate}[(a)]
		\item\label[noref]{it:pd_assump_1} $\F_{\mm^p_x}(x) := \Ex_{\xi\sim \mm^p_{x}}(f(x,\xi))$, $f \in C^1(\cH \times \Xi)$, $f(x,\cdot)$ is $\mm^p_x$-measurable and $C^{1,1}_{\beta_\F}(\Xi)$ for every $x \in \cH$, and $\F_{\mm^p_{x}}(\cdot,\xi) \in C^{1,1}_{L_\F}(\cH)$ for every $\xi \in \Xi$;
		\item\label[noref]{it:pd_assump_1'} $\rr_{\mm^d_y}(y) := \Ex_{\zeta\sim \mm^d_{y}}(\rr(y,\zeta))$, with $\rr \in C^1(\cK \times \cZ)$, $\rr(y,\cdot)$ is $\mm^d_y$-measurable and $C^{1,1}_{\beta_\rr}(\cZ)$ for every $y \in \cK$, and $\rr_{\mm^d_{y}}(\cdot,\zeta) \in C^{1,1}_{L_\rr}(\cK)$ for every $\zeta \in \cZ$;
		\item\label[noref]{it:pd_assump_2} $\hh\in \Gamma_{0}(\cK)$ and $\g\in \Gamma_{0}(\cH)$;
		\item\label[noref]{it:pd_assump_3} $\F_{\mm^p_x}+\g$ is $\mu_{p}$-strongly convex for all $x \in \cH$, and $\rr_{\mm^d_y} + \hh$ is $\mu_{d}$-strongly convex for all $y \in \cK$.
		\item\label[noref]{it:pd_assump_4} There exists $\tau >0$ such that
		\begin{align*}
		\Wass_{1}(\mm^p_x \otimes \mm^d_y,\mm^p_{x'}\otimes\mm^d_{y'}) &\leq \tau \Vert (x,y) - (x',y')\Vert_{\cH \times \cK},~\mbox{for all}~x,x'\in \cH, y,y'\in \cK .
		%\Wass_{1}(\mm_y,\mm_{y'}) &\leq \tau_d \Vert y - y'\Vert,~\mbox{for all}~y,y'\in \cK .
		\end{align*}
%		\item\label[noref]{it:pd_assump_4}$0\in \textup{sri}(\kk\dom(g) - {\blue \dom(h)})$ .
		\end{enumerate}
		$\beta_{\F},\beta_{\rr},L_{\F},L_{\rr},\mu_{p},\mu_{d}$ are all positive constants.
		 			\end{assumption}
			
\cref{assump:6}\cref{it:pd_assump_1,it:pd_assump_1',it:pd_assump_2} are to be compared to \cref{assump:1,assump:5}. This will be made clearer when we will cast the saddle point problem \eqref{eq:application} and an inclusion problem reminiscent of \eqref{eq:eq0}. %Observe that \cref{it:pd_assump_4} is a domain qualification condition that ensures that strong duality holds (see \cite[Theorem~15.23]{Baucshke&Combettes}). 
In the same way, \cref{assump:6}\cref{it:pd_assump_3} will be sufficient for \cref{assump:3} to hold. \cref{it:pd_assump_4} is the version of \cref{assump:2} on the product space $\cH \times \cK$. \\
%While \cref{assump:6}\cref{it:pd_assump_2} and \cref{it:pd_assump_4} are classical for strong duality as discussed below.\\

	Problems of the form \eqref{eq:application} arise in many fields such as image and signal processing, machine learning and partial differential equations. In the deterministic case where $\F$ and/or $\rr$ do not depend on the distributions $\mm^p_x$ and $\mm^d_y$, and where all functions are convex, various forms of such problem were studied in \eg \cite{chambolle2011first,condat2013primal, vu2013splitting,raguet&al} and many others. In \cite{bianchi2021fully}, the authors studied a stochastic variant of \eqref{eq:application} with non-state dependent measures and convex functions. Specifically, they considered $\F(x) = \Ex_{\xi\sim \mm}(f(x,\xi)), \g(x)= \Ex_{\xi\sim \mm}(g(x,\xi)), \kk = \Ex_{\xi\sim \mm}(K(\xi)), \rr\equiv 0$ and $\hh(y) = \Ex_{\xi\sim \mm}(p(x,\xi))$ for some suitable functions $f,g,h,p$ and $K$. Problems of the form \eqref{eq:application} in their full generality are difficult to tacle directly because of the presence of the (primal and dual) state-dependent distributions. To the best of our knowledge, general saddle point problems of the form \eqref{eq:application} have not been addressed in the literature.
	
\subsection{Formulation as a monotone inclusion}
As seen in \cref{section:firstorder}, the appropriate notion of solutions of \eqref{eq:application} is that of equilibria. Thus, our aim is to find an equilibrium point $(\bar{x},\bar{y})$, \ie that $(\bar{x},\bar{y})$ a solution of the static saddle point problem 
	\begin{equation}\label{eq:application_saddle_form}
			\inf_{x\in\cH} \sup_{y\in\cK}~\lag_{\mm^p_{\bar{x}},\mm^d_{\bar{y}}}(x,y),
	\end{equation}
	where we have defined the Lagrangian for the (primal and dual) probability measures $\mm^p \in \cP(\Xi)$ and $\mm^d \in \cP(\cZ)$
	\[
	\lag_{\mm^p,\mm^d}(x,y) := \F_{\mm^p}(x) + \g(x) + \langle y, \kk x \rangle - \hh(y) - \rr_{\mm^d} (y) .
	\]

If the set of saddle points is nonempty (typically under strong duality, see \eg \cite[Chapter~15]{Baucshke&Combettes}), then $(x^\star,y^\star)$ is a saddle point for \eqref{eq:application_saddle_form} if and only if 
\[
\lag_{\mm^p_{\bar{x}},\mm^d_{\bar{y}}}(x^\star,y) \leq \lag_{\mm^p_{\bar{x}},\mm^d_{\bar{y}}}(x^\star,y^\star) \leq \lag_{\mm^p_{\bar{x}},\mm^d_{\bar{y}}}(x,y^\star) \quad \forall (x,y) \in \cH \times \cK ,
\]
or equivalently, if the following optimality condition holds \cite[Corollary~19.19]{Baucshke&Combettes}\footnote{Here, only convexity is needed rather than \cref{assump:6}\cref{it:pd_assump_3}.}
	\begin{equation}\label{eq:KKT}
	\left\{
	\begin{aligned}
		 0&\in \nabla \F_{\mm^p_{\bar{x}}}(x^\star) + \partial \g(x^\star) + \kk^{*}y^\star \\
		 0&\in \nabla \rr_{\mm^d_{\bar{y}}}(y^\star) + \partial \hh(y^\star) -\kk x^\star .
		%\kk(\bar{x}) \in \partial g^{*}(\bar{y})~\mbox{and}~\kk^{*}\bar{y} + \nabla h_{\mm^p_{\bar{x}}}(\bar{x}) = 0.
	\end{aligned}
	\right.
	\end{equation}
	where $\kk^*$ is the adjoint operator of $\kk$. Therefore, $(\bar{x},\bar{y})$ will be said to be an equilibrium of \eqref{eq:application} if it is a solution of \eqref{eq:KKT}. 

	Equivalently, this can be cast as the monotone inclusion problem
	\begin{equation}\label{eq:MI-PD}
	0_{\cH\times\cK}\in \Txb(x^\star,y^\star),
	\end{equation}
	where 

	\begin{equation}\label{eq:operator_T}
	\begin{aligned}
	\bT_{\mm^p,\mm^d} &= \bA + \bL + \bB_{\mm^p,\mm^d}: \cH \times \cK \rightrightarrows \cH \times \cK, \\
	\bA &= \begin{pmatrix} \partial \g & 0 \\ 0 & \partial \hh \end{pmatrix}, \quad \bL = \begin{pmatrix} 0 & \kk^* \\ -\kk & 0 \end{pmatrix}, \quad \text{and} \quad
	\bB_{\mm^p,\mm^d} = \begin{pmatrix} \nabla \F_{\mm^p} & 0 \\ 0 & \nabla \rr_{\mm^d} \end{pmatrix} .
	\end{aligned}
	\end{equation}

\begin{remark}
The authors in \cite{wood2023stochastic} studied problems of the form
\begin{equation*}\label{eq:example-wood}
	\min_{x\in X}\max_{y\in Y} \Ex_{\xi\sim \mm_{(\bar{x},\bar{y})}} \phi(x,y,\xi),
\end{equation*}
in finite dimension where $X,Y$ are compact sets and $\phi$ is a convex-concave function that plays the role of the Lagrangian in our case. Such problems fall into the scope of \eqref{eq:application_saddle_form} where $X$ and $Y$ can be absorbed in the functions $\g$ and $\hh$ through their respective indicator functions $\iota_{X}$ and $\iota_{Y}$.
\end{remark}

The form \eqref{eq:operator_T} brings back our problem to the general setting studied in \cref{section:firstorder}. The following result shows that the operators $\bA$ and $\bB_{\mm^p,\mm^d}$ indeed verify all the appropriate properties required there. 

\begin{lemma}\label{lem:pd_application} Under \cref{assump:6}, the following properties hold for any probability measures $\mm^p \in \cP(\Xi)$ and $\mm^d \in \cP(\cZ)$:
\begin{enumerate}[(i)]
	\item\label[noref]{it:1_lem_pd} The operator $\bA$ is maximally monotone.
	\item\label[noref]{it:2_lem_pd} The operator $\bB_{\mm^p,\mm^d}$ is $\tilde{L}$-Lipschitz continuous with $\tilde{L}=\max(L_{\F},L_{\rr})$. It is maximally monotone if $f(\cdot,\xi)$ is convex for $\mm^p$-almost every $\xi \in \Xi$ and $\rr(\cdot,\zeta)$ is convex for $\mm^d$-almost every $\zeta \in \cZ$.
	\item\label[noref]{it:4_lem_pd} The operator $\bT_{\mm^p,\mm^d}$ is maximally $\tilde\mu$-strongly monotone with $\tilde\mu =  \min(\mu_p,\mu_d)$.
	\item\label[noref]{it:3_lem_pd} For any $x,x'\in\cH$ and $y,y'\in\cK$
	\[
	\sup_{(x'',y'')\in\cH\times\cK}\Vert \bB_{\mm^p_x,\mm^d_y}(x'',y'') - \bB_{\mm^p_{x'},\mm^d_{y'}}(x'',y'')\Vert_{\cH\times\cK} \leq \tilde{\beta}\tau\Vert (x,y)-(x',y')\Vert_{\cH\times\cK},
	\]
	where $\tilde{\beta}=\max(\beta_{\F},\beta_{\rr})$.
	\end{enumerate}
\end{lemma}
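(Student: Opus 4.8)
The plan is to verify the four items one at a time, exploiting the block structure of $\bA$, $\bL$ and $\bB_{\mm^p,\mm^d}$ together with the machinery already developed in \cref{section:firstorder}; items (i)--(iii) then combine into the strong-monotonicity statement, while (iv) is a direct application of \cref{lemma:1}.

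For (i) I would observe that $\bA = \partial\Phi$ with $\Phi(x,y):=\g(x)+\hh(y) \in \Gamma_0(\cH\times\cK)$ by \cref{assump:6}\cref{it:pd_assump_2}, so maximal monotonicity is Rockafellar's theorem (equivalently, a direct sum of the maximally monotone operators $\partial\g$, $\partial\hh$ is maximally monotone). For (ii), the Lipschitz bound is immediate from the block-diagonal form: since $\nabla\F_{\mm^p}$ and $\nabla\rr_{\mm^d}$ are $L_\F$- and $L_\rr$-Lipschitz by \cref{assump:6}\cref{it:pd_assump_1,it:pd_assump_1'}, squaring and adding the two component estimates gives $\Vert\bB_{\mm^p,\mm^d}(z)-\bB_{\mm^p,\mm^d}(z')\Vert^2 \le L_\F^2\Vert x-x'\Vert^2 + L_\rr^2\Vert y-y'\Vert^2 \le \tilde L^2\Vert z-z'\Vert^2$. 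Under the extra convexity hypothesis, $\F_{\mm^p}=\Ex_{\xi\sim\mm^p}(f(\cdot,\xi))$ and $\rr_{\mm^d}$ are convex and $C^1$, so $\nabla\F_{\mm^p}$, $\nabla\rr_{\mm^d}$ are maximally monotone and $\bB_{\mm^p,\mm^d}$ --- single-valued, monotone, Lipschitz, everywhere defined --- is maximally monotone by the same reasoning as in \cref{lemma:0}.

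For (iii) I would write $\bT_{\mm^p,\mm^d}-\tilde\mu\,\Id = \bM+\bL$ with $\bM:=\bA+\bB_{\mm^p,\mm^d}-\tilde\mu\,\Id$. The key point is that the $(1,1)$ block $\partial\g+\nabla\F_{\mm^p}$ equals $\partial(\g+\F_{\mm^p})$ --- this uses the exact subdifferential calculus for the $C^1$ perturbation $\F_{\mm^p}$ together with convexity of $\g+\F_{\mm^p}$ from \cref{assump:6}\cref{it:pd_assump_3}, so no separate convexity of $\F_{\mm^p}$ is needed --- whence the $(1,1)$ block of $\bM$ is $\partial\bigl(\g+\F_{\mm^p}-\tfrac{\tilde\mu}{2}\Vert\cdot\Vert^2\bigr)$, the subdifferential of a $(\mu_p-\tilde\mu)$-strongly convex function in $\Gamma_0(\cH)$, hence maximally monotone; the $(2,2)$ block is handled identically with $\mu_d$. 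Thus $\bM$ is a direct sum of two maximally monotone operators, and one reads off that it is $\tilde\mu$-strongly monotone with the optimal modulus $\tilde\mu=\min(\mu_p,\mu_d)$ since the product inner product decouples the blocks. Adding $\bL$, which is a bounded skew-adjoint linear operator (hence monotone with full domain), and invoking the standard perturbation result (\cf the argument in \cref{lemma:0}), $\bM+\bL$ is maximally monotone, and it is monotone because $\langle\bL(z-z'),z-z'\rangle=0$; therefore $\bT_{\mm^p,\mm^d}-\tilde\mu\,\Id$ is maximally monotone, i.e., $\bT_{\mm^p,\mm^d}$ is maximally $\tilde\mu$-strongly monotone.

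For (iv), since $\bB_{\mm^p_x,\mm^d_y}(x'',y'')=(\nabla\F_{\mm^p_x}(x''),\nabla\rr_{\mm^d_y}(y''))$, expanding the product-space norm yields $\Vert\bB_{\mm^p_x,\mm^d_y}(x'',y'')-\bB_{\mm^p_{x'},\mm^d_{y'}}(x'',y'')\Vert^2 = \Vert\nabla\F_{\mm^p_x}(x'')-\nabla\F_{\mm^p_{x'}}(x'')\Vert^2 + \Vert\nabla\rr_{\mm^d_y}(y'')-\nabla\rr_{\mm^d_{y'}}(y'')\Vert^2$, and I would apply \cref{lemma:1} with $B(x,\cdot)=\nabla f(x,\cdot)$ (resp. $\nabla\rr(y,\cdot)$), which is $\beta_\F$- (resp. $\beta_\rr$-)Lipschitz by \cref{assump:6}\cref{it:pd_assump_1,it:pd_assump_1'}, to bound each term, uniformly in $x''$ (resp. $y''$), by $\beta_\F^2\Wass_1(\mm^p_x,\mm^p_{x'})^2$ (resp. $\beta_\rr^2\Wass_1(\mm^d_y,\mm^d_{y'})^2$). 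The one slightly delicate point --- and the place I expect to have to be most careful --- is passing from these marginal Wasserstein distances to the joint Lipschitz bound \cref{assump:6}\cref{it:pd_assump_4}: one uses that the Wasserstein distance between product measures dominates that between each marginal (with the $\ell^1$ product metric on $\Xi\times\cZ$ one has the identity $\Wass_1(\mm^p_x\otimes\mm^d_y,\mm^p_{x'}\otimes\mm^d_{y'})=\Wass_1(\mm^p_x,\mm^p_{x'})+\Wass_1(\mm^d_y,\mm^d_{y'})$), so that both $\Wass_1(\mm^p_x,\mm^p_{x'})$ and $\Wass_1(\mm^d_y,\mm^d_{y'})$ are $\le\tau\Vert(x,y)-(x',y')\Vert_{\cH\times\cK}$. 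Combining with $\beta_\F,\beta_\rr\le\tilde\beta$ and $a^2+b^2\le(a+b)^2$ for $a,b\ge0$ gives the claimed bound, and taking the supremum over $(x'',y'')$ (the right-hand side being independent of it) finishes the proof. Apart from that measure-theoretic point and the exact subdifferential sum rule invoked in (iii), everything else is a routine transcription of the block structure and of \cref{lemma:1,lemma:0}.
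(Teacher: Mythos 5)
Your proof is correct and follows essentially the same route as the paper's: item (i) via Rockafellar's theorem for the separable subdifferential, (ii) from the block-diagonal gradient structure, (iii) by splitting off the skew-symmetric $\bL$ from the maximally strongly monotone subdifferential part, and (iv) by applying \cref{lemma:1} blockwise and passing to the product measures. If anything, you are more careful than the paper at the one delicate step in (iv) — justifying why the marginal Wasserstein distances are controlled by $\Wass_1(\mm^p_x\otimes\mm^d_y,\mm^p_{x'}\otimes\mm^d_{y'})$ via the $\ell^1$ product metric — which the paper's proof uses without comment.
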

\begin{proof}
Thanks to \eqref{eq:operator_T} and assumption \cref{it:pd_assump_2}, \cref{it:1_lem_pd} follows from \cite[Theorem~21.2 and Proposition~20.23]{Baucshke&Combettes}. 
The operator $\bB_{\mm^p,\mm^d}$ is the gradient of the separable function $\Phi(x,y) = \F_{\mm^p}(x)+\rr_{\mm^d}(y)$. From assumptions \cref{it:pd_assump_1,it:pd_assump_1'}, we have $\nabla \F_{\mm^p}={\Ex_{\xi\sim \mm^p}}\left(\nabla f(\cdot,\xi)\right)$ and similarly for $\nabla \rr_{\mm^d}$. Thus $\Phi \in C^{1,1}_{\max(L_\F,L_\rr)}(\cH \times \cK)$ whence we get claim the first part of \cref{it:2_lem_pd}. Moreover, under convexity, $\F_{\mm^p}$ and $\rr_{\mm^d}$ are also convex, hence their gradients are maximally monotone, and we conclude using \cite[Proposition~20.23]{Baucshke&Combettes}. To show \cref{it:4_lem_pd}, observe that $\bT_{\mm^p,\mm^d}(x,y) = (\partial (\F_{\mm^p}+\g)(x),\partial (\rr_{\mm^d}+\hh)(y))+\bL(x,y)$. The first part is maximally strongly monotone by \cref{it:pd_assump_3} and \cite[Example~22.4, Theorem~21.2 and Proposition~20.23]{Baucshke&Combettes}. In addition, $\bL$ is a skew-symmetric linear operator, hence maximally monotone (see \cite[Example~20.35]{Baucshke&Combettes}). We conclude using \cite[Lemma~2.4]{brezismaxmon}. 

		The proof of \cref{it:3_lem_pd} is similar to that of \cref{corollary:1}. Indeed, let us fix $(x'',y'')\in\cH\times\cK$. We then have, for any $x,y\in\cH$ and $y,y'\in\cK$
			\[
		\begin{aligned}
			&\Vert \bB_{\mm^p_x,\mm^d_y}(x'',y'') - \bB_{\mm^p_{x'},\mm^d_{y'}}(x'',y'') \Vert_{\cH\times\cK} \\
			&= \sqrt{\Big\Vert \nabla \F_{\mm^p_x}(x'') - \nabla \F_{\mm^p_{x'}}(x'') \Big\Vert_{\cH}^2 
			+ \Big\Vert \nabla \rr_{\mm^d_y}(y'') - \nabla \rr_{\mm^d_{y'}}(y'') \Big\Vert_{\cK}^2} \\
			&\leq \sqrt{\beta_{\F}^2\Wass_1(\mm^p_x,\mm^p_{x'})^2 + \beta_{\rr}^2\Wass_1(\mm^d_y,\mm^d_{y'})^2} \\
			&\leq \max(\beta_{\F},\beta_{\rr})\Wass_1(\mm^p_x \otimes \mm^p_{x'},\mm^d_y \otimes \mm^d_{y'}) \\
			&\leq \max(\beta_{\F},\beta_{\rr})\tau\Vert (x,y) - (x',y')\Vert_{\cH \times \cK} .
		\end{aligned}
		\]
\end{proof}

\subsection{Existence and uniqueness of equilibrium}
We are now in a position to prove the existence and uniqueness of an equilibrium to the problem \eqref{eq:application_saddle_form}
	\begin{theorem}[Existence and uniqueness of equilibrium point]\label{thm:equilibrium-pd }
		Under \cref{assump:6}, the map
		\[
		S:(x,y)\in\cH\times\cK\mapsto \zer(\bT_{\mm^p_x,\mm^d_y})=\{(u,v)\in\cH \times \cK:~(0,0)_{\cH\times\cK}\in \bT_{\mm^p_x,\mm^d_y}(u,v)\}
		\]
is $\tilde{\rho}$-Lipschitz with  
\begin{equation}\label{eq:pdrhomu}
\tilde\rho := \frac{\tau\tilde{\beta}}{\tilde\mu} ,
\end{equation}
where we recall that $\tilde\mu = \min(\mu_p,\mu_d)$ and $\tilde{\beta}=\max(\beta_{\F},\beta_{\rr})$.
In particular, if $\tilde\rho <1$, there is a unique equilibrium point $(\bar x,\bar y)$.
	\end{theorem}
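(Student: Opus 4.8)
The plan is to transcribe, essentially verbatim, the argument of \cref{thm:equilibrium} to the product space $\cH\times\cK$, relying on the structural facts about $\bT_{\mm^p_x,\mm^d_y}$ collected in \cref{lem:pd_application}. Endowing $\cH\times\cK$ with its product inner product, the three ingredients needed are: (i) that $S$ is well defined, i.e.\ $\zer(\bT_{\mm^p_x,\mm^d_y})$ is a singleton for every $(x,y)$; (ii) a quantitative comparison of $S(x,y)$ and $S(x',y')$ obtained from strong monotonicity; and (iii) an application of the Banach fixed point theorem once a contraction is in hand.

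For (i), I would argue exactly as in \cite[Proposition~22.11]{Baucshke&Combettes}: by \cref{lem:pd_application}\cref{it:4_lem_pd} the operator $\bT_{\mm^p_x,\mm^d_y}$ is maximally $\tilde\mu$-strongly monotone, hence in particular maximally monotone and strictly monotone. Strong monotonicity forces $\inf\{\Vert w\Vert_{\cH\times\cK} :~ w\in\bT_{\mm^p_x,\mm^d_y}(z)\}\to\infty$ as $\Vert z\Vert_{\cH\times\cK}\to\infty$, so $\bT_{\mm^p_x,\mm^d_y}$ is surjective (\cite[Corollary~21.25]{Baucshke&Combettes}) and $\zer(\bT_{\mm^p_x,\mm^d_y})\neq\emptyset$, while strict monotonicity makes this zero set a singleton (\cite[Proposition~23.35]{Baucshke&Combettes}).

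For (ii), fix $(x,y),(x',y')\in\cH\times\cK$ and put $(u,v)=S(x,y)$, $(u',v')=S(x',y')$, so that $0_{\cH\times\cK}\in\bT_{\mm^p_x,\mm^d_y}(u,v)$ and $0_{\cH\times\cK}\in\bT_{\mm^p_{x'},\mm^d_{y'}}(u',v')$. Using $\bT_{\mm^p,\mm^d}=\bA+\bL+\bB_{\mm^p,\mm^d}$, the second inclusion reads $-\bB_{\mm^p_{x'},\mm^d_{y'}}(u',v')\in(\bA+\bL)(u',v')$, hence
\[
\bB_{\mm^p_x,\mm^d_y}(u',v')-\bB_{\mm^p_{x'},\mm^d_{y'}}(u',v')\in\bT_{\mm^p_x,\mm^d_y}(u',v') .
\]
Applying the $\tilde\mu$-strong monotonicity of $\bT_{\mm^p_x,\mm^d_y}$ to this element together with $0_{\cH\times\cK}\in\bT_{\mm^p_x,\mm^d_y}(u,v)$, then Cauchy--Schwarz, then \cref{lem:pd_application}\cref{it:3_lem_pd}, yields
\[
\tilde\mu\,\Vert S(x,y)-S(x',y')\Vert_{\cH\times\cK} \leq \Vert \bB_{\mm^p_x,\mm^d_y}(u',v')-\bB_{\mm^p_{x'},\mm^d_{y'}}(u',v')\Vert_{\cH\times\cK} \leq \tilde\beta\tau\,\Vert (x,y)-(x',y')\Vert_{\cH\times\cK},
\]
i.e.\ $S$ is $\tilde\rho$-Lipschitz with $\tilde\rho=\tilde\beta\tau/\tilde\mu$ as in \eqref{eq:pdrhomu}.

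For (iii), if $\tilde\rho<1$ then $S$ is a contraction on the complete space $\cH\times\cK$ and \cref{thm:fixedpoint} provides a unique fixed point $(\bar x,\bar y)$, which is by construction the unique equilibrium of \eqref{eq:application_saddle_form}. I do not anticipate any genuine obstacle: the proof is a routine transcription of \cref{thm:equilibrium}, and all the nontrivial work (maximal strong monotonicity of $\bT_{\mm^p_x,\mm^d_y}$, giving surjectivity and single-valuedness of $S$, and the distributional Lipschitz estimate for $\bB$) is already packaged in \cref{lem:pd_application}. The only step deserving a moment of care is that the strong monotonicity invoked must be that of the \emph{full} operator $\bT_{\mm^p_x,\mm^d_y}$ (which absorbs the skew-symmetric coupling $\bL$), since $\bB_{\mm^p,\mm^d}$ is only Lipschitz and, absent convexity of $f$ and $\rr$, need not be monotone at all.
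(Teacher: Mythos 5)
Your proposal is correct and follows exactly the route the paper intends: its own proof is literally ``following the same lines as \cref{thm:equilibrium} and using \cref{lem:pd_application}\cref{it:4_lem_pd}, we conclude,'' and your write-up is a faithful expansion of that transcription, including the key inclusion $\bB_{\mm^p_x,\mm^d_y}(u',v')-\bB_{\mm^p_{x'},\mm^d_{y'}}(u',v')\in\bT_{\mm^p_x,\mm^d_y}(u',v')$ and the use of the strong monotonicity of the full operator $\bT$ rather than of $\bB$ alone. No gaps.
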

	An immediate consequence of this result is that \eqref{eq:KKT} has a unique solution which is $(\bar x,\bar y)$.
	\begin{proof}
		Following the same lines as in \cref{thm:equilibrium} and using \cref{lem:pd_application}\cref{it:4_lem_pd}, we conclude. 
			\end{proof}
\begin{remark}
\cref{thm:equilibrium-pd } is to be compared to \cite[Theorem 2.6]{wood2023stochastic}. Notice that the monotone inclusion \eqref{eq:MI-PD} defining the equilibrium in \cite{wood2023stochastic}, handled through a a variational inequality there to the presence of constraints on $x$ and $y$, can be cast in our setting through normal cones absorbed in $\partial \g$ and $\partial \hh$ under appropriate qualification conditions.
%(see \eg \cite[Example 12.50]{rockafellar2009variational} for details).
\end{remark}			
			
Following the reasoning of \cref{section:firstorder,section:hessian2ndorder}, let us define the following gap function
\begin{equation}
	\EE_{\bar{x},\bar{y}}(x,y) = \bB_{\mm^p_x,\mm^d_{y}}(x,y) - \bB_{\mm^p_{\bar x},\mm^d_{\bar{y}}}(x,y).
\end{equation}
Mimicking the proof of \cref{lemma:3} using \cref{assump:6},  we prove the following.
\begin{lemma}\label{lemma:Lipschitz_gap}
Under \cref{assump:6}, $\EE_{\bar{x},\bar{y}}(.)$ is $(2\tilde{L}+\tau\tilde{\beta})$-Lipschitz continuous, where $\tilde{L} = \max(L_\F,L_\rr)$.
\end{lemma}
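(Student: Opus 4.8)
The plan is to mimic the proof of \cref{lemma:3} line by line, now on the product space $\cH\times\cK$, invoking \cref{lem:pd_application} in place of \cref{assump:5} and \cref{corollary:1}. Fix two points $(x,y)$ and $(x',y')$ in $\cH\times\cK$. The first step is the telescoping decomposition obtained by inserting and removing the single evaluation $\bB_{\mm^p_x,\mm^d_{y}}(x',y')$:
\begin{multline*}
\EE_{\bar{x},\bar{y}}(x,y) - \EE_{\bar{x},\bar{y}}(x',y') = \big(\bB_{\mm^p_x,\mm^d_{y}}(x,y) - \bB_{\mm^p_x,\mm^d_{y}}(x',y')\big) \\
+ \big(\bB_{\mm^p_x,\mm^d_{y}}(x',y') - \bB_{\mm^p_{x'},\mm^d_{y'}}(x',y')\big) + \big(\bB_{\mm^p_{\bar x},\mm^d_{\bar y}}(x',y') - \bB_{\mm^p_{\bar x},\mm^d_{\bar y}}(x,y)\big) ,
\end{multline*}
which one checks by expanding $\EE_{\bar x,\bar y} = \bB_{\mm^p_{(\cdot)},\mm^d_{(\cdot)}} - \bB_{\mm^p_{\bar x},\mm^d_{\bar y}}$ at $(x,y)$ and at $(x',y')$: the two occurrences of $\bB_{\mm^p_x,\mm^d_{y}}(x',y')$ cancel and what remains is precisely the left-hand side.

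The second step is to apply the triangle inequality and bound the three summands on the right. The first summand compares $\bB$ at two distinct points of $\cH\times\cK$ for the fixed measure pair $(\mm^p_x,\mm^d_y)$, and the third does the same for the fixed equilibrium pair $(\mm^p_{\bar x},\mm^d_{\bar y})$; hence, by the $\tilde L$-Lipschitz continuity of $\bB_{\mm^p,\mm^d}$ from \cref{lem:pd_application}\cref{it:2_lem_pd} with $\tilde L = \max(L_\F,L_\rr)$, each of these two terms is at most $\tilde L\,\Vert (x,y)-(x',y')\Vert_{\cH\times\cK}$. The second summand compares $\bB$ at the \emph{same} point $(x',y')$ for the two measure pairs $(\mm^p_x,\mm^d_y)$ and $(\mm^p_{x'},\mm^d_{y'})$, which is exactly the quantity controlled by \cref{lem:pd_application}\cref{it:3_lem_pd} (take the supremand point there to be $(x',y')$), so it is at most $\tilde\beta\tau\,\Vert (x,y)-(x',y')\Vert_{\cH\times\cK}$ with $\tilde\beta = \max(\beta_\F,\beta_\rr)$. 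Adding the three bounds gives the Lipschitz constant $2\tilde L + \tilde\beta\tau$, which is the claim.

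I do not expect a genuine obstacle here. The only points needing a little care are the bookkeeping of the telescoping identity -- the inserted anchor evaluation must be chosen so that exactly the spurious terms cancel -- and the observation that \cref{lem:pd_application}\cref{it:2_lem_pd,it:3_lem_pd} are formulated for arbitrary probability measures $\mm^p \in \cP(\Xi)$ and $\mm^d \in \cP(\cZ)$, and therefore apply simultaneously to the state-induced pairs $(\mm^p_x,\mm^d_y)$, $(\mm^p_{x'},\mm^d_{y'})$ and to the equilibrium pair $(\mm^p_{\bar x},\mm^d_{\bar y})$. Beyond that, the argument is three invocations of already-established Lipschitz estimates together with the triangle inequality, exactly paralleling the three-term computation in the proof of \cref{lemma:3}.
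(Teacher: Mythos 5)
Your proof is correct and follows exactly the route the paper intends: the paper states the result by saying it is obtained by ``mimicking the proof of \cref{lemma:3}'', and your three-term telescoping with the anchor $\bB_{\mm^p_x,\mm^d_{y}}(x',y')$, bounded via \cref{lem:pd_application}\cref{it:2_lem_pd} (twice) and \cref{lem:pd_application}\cref{it:3_lem_pd} (once), is precisely that adaptation to the product space. No gaps.
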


\subsection{Related first and second-order dynamics}
 To lighten the notation, from now on we set $Z(t) := (x(t),y(t))$, $\bar{Z} := (\bar{x},\bar{y})$ is the equilibrium of \eqref{eq:application_saddle_form} (see \cref{thm:equilibrium-pd }), and $\mm_{\bar{Z}} := \mm^p_{\bar{x}} \otimes \mm^d_{\bar{y}}$.

\subsubsection{First order system}
Given an initial data $Z(t_0) = (x(t_0),y(t_0))\in \dom(g)\times\dom(h)$, we consider the following first-order system associated to the monotone inclusion \eqref{eq:MI-PD}: 
	\begin{equation}\tag{SPDS}\label{eq:spds}
		\dot{Z}(t) +  \Tzb(Z(t)) +  \EE_{\bar{Z}}(Z(t)) \ni 0_{\cH\times\cK}	.
	\end{equation}

	Arguing as in \cref{prop:1st_order_wp}\footnote{ Here, we invoke \cite[Proposition~3.12]{brezismaxmon} rather than \cite[Proposition~3.13]{brezismaxmon} as $\Tzb$ is a subdifferential operator perturbed by a bounded linear operator. This explains why we do not need any domain interiority assumption in this case.} and \cref{thm:1st_order_convergence} we have the following result.
	\begin{proposition}
	Assume that \cref{assump:6} holds and $\tilde{\rho} < 1$. Then, for any initial data $Z(t_0) = (x(t_0),y(t_0))\in \dom(g)\times\dom(h)$, \eqref{eq:spds} admits a unique strong solution $Z:t\in [t_0,+\infty[ \; \mapsto (x(t),y(t))$. Moreover, 
		\[
		 \Vert Z(t) - \bar{Z}\Vert_{\cH \times \cK} \leq C e^{-2\tilde{\mu}(1 - \tilde{\rho})t},~\forall t\geq t_0,
		\]
		with $C =  \Vert Z_0 - \bar{Z}\Vert_{\cH \times \cK} e^{2\tilde{\mu}(1 - \tilde{\rho})t_0}$, where $\tilde{\rho}$ and $\tilde{\mu}$ are defined in \eqref{eq:pdrhomu}.
	\end{proposition}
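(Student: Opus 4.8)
The plan is to lift, to the product Hilbert space $\cH\times\cK$, the two pillars of \cref{section:firstorder} — the well-posedness statement \cref{prop:1st_order_wp} and the convergence statement \cref{thm:1st_order_convergence} — applied this time to the operator $\Tzb$ and the perturbation $\EE_{\bar Z}$ of \eqref{eq:spds}. The structural facts that make this possible are supplied by \cref{lem:pd_application} and \cref{lemma:Lipschitz_gap}, which play here precisely the roles that \cref{corollary:1} and \cref{lemma:3} played in \cref{section:firstorder}.

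\textbf{Well-posedness.} Recall from \eqref{eq:operator_T} that $\Tzb = \bA + \bL + \bB_{\mm^p_{\bar x},\mm^d_{\bar y}}$, where $\bA + \bB_{\mm^p_{\bar x},\mm^d_{\bar y}}$ is the subdifferential of the separable function $\Phi:(x,y)\mapsto (\g+\F_{\mm^p_{\bar x}})(x) + (\hh+\rr_{\mm^d_{\bar y}})(y)$, which is $\tilde\mu$-strongly convex by \cref{assump:6}\cref{it:pd_assump_3}, and $\bL$ is a bounded skew-symmetric linear operator; thus $\Tzb$ is maximally $\tilde\mu$-strongly monotone by \cref{lem:pd_application}\cref{it:4_lem_pd} (applied at $\mm^p=\mm^p_{\bar x}$, $\mm^d=\mm^d_{\bar y}$). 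By \cref{lemma:Lipschitz_gap}, $\EE_{\bar Z}$ is $(2\tilde L+\tau\tilde\beta)$-Lipschitz on $\cH\times\cK$, so rewriting \eqref{eq:spds} as $\dot Z(t) + \partial\Phi(Z(t)) + (\bL + \EE_{\bar Z})(Z(t))\ni 0$ displays it as a subgradient flow perturbed by the globally Lipschitz, autonomous map $\bL + \EE_{\bar Z}$ (whose local integrability in $t$ is then immediate). \cite[Proposition~3.12]{brezismaxmon} applies — no interiority assumption on $\dom(\bA)$ is needed for this subdifferential-type result — and yields, for every $T>t_0$, a unique strong solution on $[t_0,T]$; by uniqueness these agree on overlaps, giving the unique global strong solution $Z=(x,y)$.

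\textbf{Exponential decay.} By \cref{thm:equilibrium-pd } we have $0_{\cH\times\cK}\in\Tzb(\bar Z)$, and since $\EE_{\bar Z}(\bar Z) = \bB_{\mm^p_{\bar x},\mm^d_{\bar y}}(\bar Z) - \bB_{\mm^p_{\bar x},\mm^d_{\bar y}}(\bar Z) = 0$, also $0_{\cH\times\cK}\in(\Tzb+\EE_{\bar Z})(\bar Z)$. Mimicking the strongly monotone case of \cref{thm:1st_order_convergence}, set $h(t):=\Vert Z(t)-\bar Z\Vert_{\cH\times\cK}$, which is absolutely continuous by the previous step, so $\tfrac12\tfrac{\dd}{\dd t}h(t)^2 = \langle\dot Z(t),Z(t)-\bar Z\rangle_{\cH\times\cK}$. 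Choosing a selection $U(t)\in\Tzb(Z(t))$ with $-\dot Z(t)=U(t)+\EE_{\bar Z}(Z(t))$, $\tilde\mu$-strong monotonicity of $\Tzb$ at the pair $(Z(t),\bar Z)$ gives $\langle U(t),Z(t)-\bar Z\rangle_{\cH\times\cK}\geq\tilde\mu\,h(t)^2$, while \cref{lem:pd_application}\cref{it:3_lem_pd}, specialized with all three arguments taken along the trajectory, gives $\Vert\EE_{\bar Z}(Z(t))\Vert_{\cH\times\cK}\leq\tilde\beta\tau\,h(t)$. Combining with Cauchy–Schwarz produces
\[
\tfrac12\tfrac{\dd}{\dd t}\Vert Z(t)-\bar Z\Vert_{\cH\times\cK}^2 \leq -\bigl(\tilde\mu-\tilde\beta\tau\bigr)\Vert Z(t)-\bar Z\Vert_{\cH\times\cK}^2 = -\tilde\mu(1-\tilde\rho)\Vert Z(t)-\bar Z\Vert_{\cH\times\cK}^2,
\]
with $\tilde\rho=\tilde\beta\tau/\tilde\mu$ as in \eqref{eq:pdrhomu}; since $\tilde\rho<1$, integrating via Grönwall's \cref{lemma:gronwall1} delivers the announced bound with $C=\Vert Z_0-\bar Z\Vert_{\cH\times\cK}\,e^{2\tilde\mu(1-\tilde\rho)t_0}$.

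\textbf{Main obstacle.} There is no genuinely new difficulty: the argument is a faithful product-space transcription of \cref{prop:1st_order_wp} and \cref{thm:1st_order_convergence}. The only two points that deserve care are (i) recognizing $\Tzb$ as a subdifferential operator perturbed by the bounded linear skew operator $\bL$, which is exactly what permits invoking \cite[Proposition~3.12]{brezismaxmon} and dispensing with the interiority condition of \cref{assump:1} (as already flagged in the footnote accompanying \eqref{eq:spds}); and (ii) invoking \cref{lem:pd_application}\cref{it:3_lem_pd} in the precise form $\Vert\EE_{\bar Z}(Z(t))\Vert_{\cH\times\cK}\leq\tilde\beta\tau\,h(t)$, which is the exact analogue of the step where \cref{corollary:1} was used in \cref{thm:1st_order_convergence}.
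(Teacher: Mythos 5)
Your argument follows exactly the route the paper intends: its own proof is the one-line ``arguing as in \cref{prop:1st_order_wp} and \cref{thm:1st_order_convergence}'', together with the footnote that \cite[Proposition~3.12]{brezismaxmon} replaces Proposition~3.13 because $\Tzb$ is a subdifferential perturbed by the bounded skew operator $\bL$ (which is precisely your point (i)), and your use of \cref{lem:pd_application}\cref{it:4_lem_pd}, \cref{lem:pd_application}\cref{it:3_lem_pd} and \cref{lemma:Lipschitz_gap} mirrors the roles of \cref{assump:3}, \cref{corollary:1} and \cref{lemma:3} in \cref{section:firstorder}. So the approach is the same and the structural steps are all sound.

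One quantitative point does not close, however. From
\[
\tfrac12\tfrac{\dd}{\dd t}\Vert Z(t)-\bar Z\Vert^2 \leq -\tilde\mu(1-\tilde\rho)\Vert Z(t)-\bar Z\Vert^2,
\]
Grönwall's \cref{lemma:gronwall1} (with $w\equiv-\tilde\mu(1-\tilde\rho)$, $v\equiv 0$) gives $\Vert Z(t)-\bar Z\Vert \leq \Vert Z_0-\bar Z\Vert e^{-\tilde\mu(1-\tilde\rho)(t-t_0)}$, i.e.\ \emph{half} the decay rate announced in the statement; your claim that this ``delivers the announced bound'' with the exponent $-2\tilde\mu(1-\tilde\rho)t$ is therefore not justified by the computation you carried out. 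The spurious factor $2$ is inherited from the paper itself: in the proof of \cref{thm:1st_order_convergence}, dividing \eqref{eq:der} by $h(t)$ yields $\dot h\leq-\varphi(h)$, not $\dot h\leq-2\varphi(h)$ as written in \eqref{eq:der1}, so the rate in \eqref{eq:speed2} (and hence in the present proposition) should read $e^{-\mu(1-\rho)(t-t_0)}$. Your derivation is the correct one; you should simply state the bound it actually produces rather than the (erroneous) one in the proposition.
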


	\subsubsection{Second-order system}
	 
	As we have done in \cref{section:hessian2ndorder}, here, we restrict ourselves to the smooth setting where both $\g$ and $\hh$ are smooth with Lipschitz continuous gradient.
 
In this case, the operator $\bA$, hence $\Tzb$, is single-valued, Lipschitz continuous and strongly monotone. In explicit forms, for $Z=(x,y)$, we have
		\begin{equation}\label{eq:operator_T_smooth}
		\Tzb(Z)  = \left(\nabla_x \lag_{\mm^p_{\bar{x}},\mm^d_{\bar{y}}}(x,y), - \nabla_y \lag_{\mm^p_{\bar{x}},\mm^d_{\bar{y}}}(x,y)\right).
	\end{equation}

	 We propose the following inertial system associated to \eqref{eq:MI-PD}
	\begin{equation}\tag{ISPDS$_{2\sqrt{\tilde{\mu}}}$}\label{eq:sipds}
%	\left\{
	\begin{aligned}
	\ddot{Z}(t) + 2\sqrt{\tilde\mu}\dot{Z}(t) + 
	\begin{pmatrix}
	\nabla_x \lag_{\mm^p_{\bar{x}},\mm^d_{\bar{y}}}\left(x(t),y(t)+\frac{1}{\sqrt{\tilde\mu}}\dot{y}(t)\right) \\ 
	- \nabla_y \lag_{\mm^p_{\bar{x}},\mm^d_{\bar{y}}}\left(x(t)+\frac{1}{\sqrt{\tilde\mu}}\dot{x}(t),y(t)\right)
	\end{pmatrix} + \EE_{\bar{Z}}(Z(t)) = 0 .
	\end{aligned}
%	\right.
	\end{equation}
	%where $\EE_{\bar{Z}}(Z(t)) = (\e_{\bar{x}}(x(t)),\e_{\bar{y}}(y(t)))$.
	Unperturbed second order primal-dual dynamical systems for solving saddle point problems have been actively studied in recent years, essentially with vanishing viscous damping; see e.g., \cite{Attouch_admm} and \cite{He24} and references therein.

Unlike \eqref{eq:sec2_explicit_st_cvx}, we did not include geometric damping in \eqref{eq:sipds} for the sake of simplicity. Moreover, one can see that the system \eqref{eq:sipds} is driven by the gradient of the Lagrangian but evaluated at extrapolated points, which is standard approach for primal-dual second-order systems. In fact, one has to keep in mind that unlike \eqref{eq:sec2_explicit_st_cvx}, the system \eqref{eq:sipds} is driven by an operator deriving from the Lagrangian which is convex-concave and contains a bilinear form. This is the reason underlying the inclusion of the extrapolation step. This will also necessitate to modify the Lyapunov analysis in the proof \cref{thm:th1} though the main ingredients will remain essentially the same. \\

We first state that \eqref{eq:sipds} is well posed. This follows from the same arguments as \cite[Theorem~5]{Attouch_admm}, using the Cauchy-Lipschitz theorem since $\nabla_x \lag_{\mm^p_{\bar{x}},\mm^d_{\bar{y}}}$ and $\nabla_y \lag_{\mm^p_{\bar{x}},\mm^d_{\bar{y}}}$ are globally Lipschitz continuous by assumption, and so is $\EE_{\bar{Z}}$ by \cref{lemma:Lipschitz_gap}.
\begin{proposition}\label{prop:wellglobalsipds}
Suppose that \cref{assump:6}\cref{it:pd_assump_1,it:pd_assump_2,it:pd_assump_4} hold and, moreover, that $\g$ and $\hh$ are also continuously differentiable with Lipschitz continuous gradient. Then, for any given initial condition $(x(t_0), \dot{x}(t_0))=(x_0,u_0)\in \cH \times \cH$ and $(y(t_0), \dot{y}(t_0))=(y_0,v_0)\in \cK \times \cK$ the evolution system \eqref{eq:sipds} has a unique strong global solution $Z(\cdot)=(x(\cdot),y(\cdot))$ with 
\begin{itemize}
\item $Z \in C^1([0,+\infty[;\cH \times \cK)$;
\item $Z$ and $\dot{Z}$ are absolutely continuous on every compact subset of the interior of $[t_0,+\infty[$ (hence almost everywhere differentiable);
\item for almost all $t \in [t_0,+\infty[$, \eqref{eq:sipds} holds with $Z(t_0) = (x_0,y_0)$ and $\dot{Z}(t_0) = (u_0,v_0)$.
\end{itemize}
\end{proposition}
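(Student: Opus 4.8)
The plan is to reduce \eqref{eq:sipds} to a first-order autonomous evolution equation on the phase space $(\cH\times\cK)^2$ and to apply the global Cauchy--Lipschitz (Picard--Lindel\"of) theorem, exactly in the spirit of \cite[Theorem~5]{Attouch_admm}. Since $\g,\hh\in C^1$ with Lipschitz gradients, $\Tzb$ is single-valued with the explicit form \eqref{eq:operator_T_smooth}, so introducing $W(t) = (Z(t),\dot Z(t))\in(\cH\times\cK)^2$ turns \eqref{eq:sipds} into $\dot W(t) = \mathcal{G}(W(t))$, $W(t_0) = ((x_0,y_0),(u_0,v_0))$, with
\[
\mathcal{G}(Z,P) := \big(P,\; -2\sqrt{\tilde\mu}\,P - \Lambda(Z,P) - \EE_{\bar Z}(Z)\big),
\]
where, writing $Z = (x,y)$ and $P = (p,q)$,
\[
\Lambda(Z,P) := \Big(\nabla_x \lag_{\mm^p_{\bar x},\mm^d_{\bar y}}\big(x,\,y+\tfrac{1}{\sqrt{\tilde\mu}}\,q\big),\; -\nabla_y \lag_{\mm^p_{\bar x},\mm^d_{\bar y}}\big(x+\tfrac{1}{\sqrt{\tilde\mu}}\,p,\,y\big)\Big).
\]

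The crux is to verify that $\mathcal{G}$ is globally Lipschitz continuous on $(\cH\times\cK)^2$. Expanding the Lagrangian gives $\nabla_x\lag_{\mm^p_{\bar x},\mm^d_{\bar y}}(x,y) = \nabla\F_{\mm^p_{\bar x}}(x) + \nabla\g(x) + \kk^{*}y$ and $\nabla_y\lag_{\mm^p_{\bar x},\mm^d_{\bar y}}(x,y) = \kk x - \nabla\hh(y) - \nabla\rr_{\mm^d_{\bar y}}(y)$. By \cref{assump:6}\cref{it:pd_assump_1,it:pd_assump_1'} the maps $\nabla\F_{\mm^p_{\bar x}}$ and $\nabla\rr_{\mm^d_{\bar y}}$ are globally Lipschitz, and under the standing hypothesis so are $\nabla\g,\nabla\hh$; since $\kk$ and $\kk^{*}$ are bounded, each term above is globally Lipschitz in $(x,y)$, and precomposition with the bounded linear extrapolation maps $(Z,P)\mapsto(x,\,y+\tfrac1{\sqrt{\tilde\mu}}q)$ and $(Z,P)\mapsto(x+\tfrac1{\sqrt{\tilde\mu}}p,\,y)$ preserves this, so $\Lambda$ is globally Lipschitz on $(\cH\times\cK)^2$. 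Adding the linear term $-2\sqrt{\tilde\mu}\,P$, the projection $(Z,P)\mapsto P$, and $\EE_{\bar Z}$ — which is $(2\tilde L+\tau\tilde\beta)$-Lipschitz by \cref{lemma:Lipschitz_gap} — we conclude that $\mathcal{G}$ is globally Lipschitz, with some constant $M$.

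The global Cauchy--Lipschitz theorem in Hilbert space then yields a unique $W\in C^1([t_0,+\infty[;(\cH\times\cK)^2)$ solving $\dot W = \mathcal{G}(W)$ with the prescribed Cauchy datum; global existence (no finite-time blow-up) follows from the Lipschitz bound, since $\tfrac{d}{dt}\|W(t)\| \le \|\mathcal{G}(0)\| + M\|W(t)\|$ and Gr\"onwall's \cref{lemma:gronwall1} gives an a priori bound on every $[t_0,T]$. Unwinding $W = (Z,\dot Z)$: the first block $Z = (x,y)$ lies in $C^1([t_0,+\infty[;\cH\times\cK)$, the second block $\dot Z$ is itself of class $C^1$ (being a component of the $C^1$ map $W$), so in particular $Z$ and $\dot Z$ are absolutely continuous on every compact subinterval of $]t_0,+\infty[$ and almost everywhere differentiable; and, reading off the definition of $\mathcal{G}$, the system \eqref{eq:sipds} holds for every $t\ge t_0$ with $Z(t_0) = (x_0,y_0)$, $\dot Z(t_0) = (u_0,v_0)$, which is the claimed strong global solution.

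I do not expect a genuine obstacle: the two points requiring care are that the velocity-dependent extrapolation in the force term is harmless (it is a bounded linear operation, hence does not spoil the Lipschitz estimate) and that the decision-dependent error $\EE_{\bar Z}$ is globally Lipschitz — precisely the content of \cref{lemma:Lipschitz_gap}, which itself relies on the $\Wass_1$-Lipschitz property \cref{assump:6}\cref{it:pd_assump_4}. Had one only assumed $\nabla\g,\nabla\hh$ locally Lipschitz, the argument would first deliver a unique maximal solution and one would need a supplementary a priori estimate (a Lyapunov bound of the kind used in the proof of \cref{thm:th1}) to extend it to $[t_0,+\infty[$; under the hypotheses of the proposition this extra step is unnecessary.
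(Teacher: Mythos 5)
Your proposal is correct and follows essentially the same route as the paper, which likewise reduces \eqref{eq:sipds} to a first-order system and invokes the global Cauchy--Lipschitz theorem (as in \cite[Theorem~5]{Attouch_admm}), the key point being that the extrapolated Lagrangian gradients and $\EE_{\bar Z}$ (via \cref{lemma:Lipschitz_gap}) are globally Lipschitz. You simply spell out the phase-space reduction and the Lipschitz verification in more detail than the paper does.
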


\medskip

Let us now move to the convergence properties of \eqref{eq:sipds}. We define the following energy function $\V:[t_0,\infty[\to\R^+$ by
	\[
	\V(t):= \lag_{\mm^p_{\bar{x}},\mm^d_{\bar{y}}}(x(t),\bar{y}) - \lag_{\mm^p_{\bar{x}},\mm^d_{\bar{y}}}(\bar{x},y(t)) + \frac{1}{2}\Vert v(t)\Vert_{\cH \times \cK}^{2}
	\]
	with 
	\[
	v(t) := \sqrt{\tilde{\mu}}\left(Z(t) - \bar{Z}\right) + \dot{Z}(t),
	\]
	Observe that the Lagrangian gap in $\V$ is nonnegative and convex in $Z(t)$.

\begin{theorem}\label{thm:th2}
		 Assume that \cref{assump:6} holds and, moreover, that $\g$ and $\hh$ are also continuously differentiable with Lipschitz continuous gradient. Let $t\in[t_0,\infty[\mapsto(x(t),y(t))$ be the solution of \eqref{eq:sipds}. Suppose that  $\tilde{\rho}< \frac{\sqrt{2}}{4}$, where $\tilde{\rho}$ is as given in \eqref{eq:pdrhomu}.
		
		We then have for all $t\geq t_0$:
	\begin{equation}\label{eq:thm1_lag_est}
			 \frac{\tilde{\mu}}{2}\Vert  Z(t)-\bar{Z}\Vert_{\cH \times \cK}^2 \leq \lag_{\mm^p_{\bar{x}},\mm^d_{\bar{y}}}(x(t),\bar{y}) - \lag_{\mm^p_{\bar{x}},\mm^d_{\bar{y}}}(\bar{x},y(t)) \leq \V(t_0) e^{-\frac{\sqrt{\tilde{\mu}}}{4}(t-t_0)}.
	\end{equation}
	
	\end{theorem}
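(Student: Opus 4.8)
Following the template of \cref{thm:th1}, the plan is to run a Lyapunov argument after first absorbing the bilinear coupling and the extrapolation step into a clean monotone-plus-skew structure. Denote by $\Phi(Z) := \lag_{\mm^p_{\bar{x}},\mm^d_{\bar{y}}}(x,\bar{y}) - \lag_{\mm^p_{\bar{x}},\mm^d_{\bar{y}}}(\bar{x},y)$ the Lagrangian gap, so that $\V(t) = \Phi(Z(t)) + \tfrac12\Vert v(t)\Vert_{\cH\times\cK}^2$. By \cref{assump:6}\cref{it:pd_assump_3}, $\Phi$ is the sum of a $\mu_p$-strongly convex function of $x$ and a $\mu_d$-strongly convex function of $y$, hence $\tilde{\mu}$-strongly convex on $\cH\times\cK$; moreover $\Phi\geq0$, and since $\g,\hh$ are smooth the optimality system \eqref{eq:KKT} says exactly that $\nabla\Phi(\bar{Z})=0$, so $\bar{Z}$ is the unique minimizer of $\Phi$ and $\Phi(\bar{Z})=0$. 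Because the coupling in \eqref{eq:operator_T} is the skew-symmetric operator $\bL$, one has the identity $\Tzb(Z)=\nabla\Phi(Z)+\bL(Z-\bar{Z})$; and since $\nabla_x\lag$ (resp. $-\nabla_y\lag$) depends on $y$ (resp. $x$) only through the bilinear term, the crossed extrapolation in \eqref{eq:sipds} contributes precisely $\tfrac{1}{\sqrt{\tilde{\mu}}}\bL\dot{Z}(t)$. Using $\bL(Z-\bar{Z})+\tfrac{1}{\sqrt{\tilde{\mu}}}\bL\dot{Z}=\tfrac{1}{\sqrt{\tilde{\mu}}}\bL v$ with $v=\sqrt{\tilde{\mu}}(Z-\bar{Z})+\dot{Z}$, the system \eqref{eq:sipds} therefore rewrites as
\[
\ddot{Z}(t) + 2\sqrt{\tilde{\mu}}\,\dot{Z}(t) + \nabla\Phi(Z(t)) + \tfrac{1}{\sqrt{\tilde{\mu}}}\,\bL v(t) + \EE_{\bar{Z}}(Z(t)) = 0 .
\]

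Next I would differentiate $\V$. By \cref{prop:wellglobalsipds}, $Z$, $\dot{Z}$, hence $v$ and $\V$, are absolutely continuous on compact subintervals of $]t_0,+\infty[$, and $\dot{\V}(t)=\langle\nabla\Phi(Z(t)),\dot{Z}(t)\rangle+\langle v(t),\dot{v}(t)\rangle$ almost everywhere. Substituting $\ddot{Z}$ from the reformulated equation into $\dot{v}=\sqrt{\tilde{\mu}}\dot{Z}+\ddot{Z}$, the skew-symmetric term disappears since $\langle v,\bL v\rangle=0$; then, replacing $v=\sqrt{\tilde{\mu}}(Z-\bar{Z})+\dot{Z}$, using the $\tilde{\mu}$-strong convexity inequality $\langle\nabla\Phi(Z),Z-\bar{Z}\rangle\geq\Phi(Z)+\tfrac{\tilde{\mu}}{2}\Vert Z-\bar{Z}\Vert^2$ and the algebraic identity $\sqrt{\tilde{\mu}}\,\V=\sqrt{\tilde{\mu}}\,\Phi(Z)+\tfrac{\tilde{\mu}^{3/2}}{2}\Vert Z-\bar{Z}\Vert^2+\tilde{\mu}\langle Z-\bar{Z},\dot{Z}\rangle+\tfrac{\sqrt{\tilde{\mu}}}{2}\Vert\dot{Z}\Vert^2$, one arrives at
\[
\dot{\V}(t) + \sqrt{\tilde{\mu}}\,\V(t) + \tfrac{\sqrt{\tilde{\mu}}}{2}\Vert\dot{Z}(t)\Vert^2 \leq \Vert v(t)\Vert\,\Vert\EE_{\bar{Z}}(Z(t))\Vert .
\]

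It then remains to absorb the right-hand side, exactly as in the proof of \cref{thm:th1}. Since $\EE_{\bar{Z}}(\bar{Z})=0$, \cref{lem:pd_application}\cref{it:3_lem_pd} (with the measure pairs $\mm^p_x\otimes\mm^d_y$ and $\mm^p_{\bar{x}}\otimes\mm^d_{\bar{y}}$, evaluated at $Z$) gives $\Vert\EE_{\bar{Z}}(Z)\Vert\leq\tilde{\beta}\tau\Vert Z-\bar{Z}\Vert$. Splitting $\sqrt{\tilde{\mu}}\,\V\geq\tfrac{\sqrt{\tilde{\mu}}}{2}\V+\tfrac{\tilde{\mu}^{3/2}}{4}\Vert Z-\bar{Z}\Vert^2$ via strong convexity of $\Phi$ and the nonnegativity of the kinetic part, and using $\Vert v\Vert^2\leq2\V$ together with Young's inequality $\Vert v\Vert\,\Vert\EE_{\bar{Z}}(Z)\Vert\leq\tfrac{\sqrt{\tilde{\mu}}}{8}\Vert v\Vert^2+\tfrac{2}{\sqrt{\tilde{\mu}}}\Vert\EE_{\bar{Z}}(Z)\Vert^2$, one gets $\Vert v\Vert\,\Vert\EE_{\bar{Z}}(Z)\Vert\leq\tfrac{\sqrt{\tilde{\mu}}}{4}\V+\tfrac{2\tilde{\beta}^2\tau^2}{\sqrt{\tilde{\mu}}}\Vert Z-\bar{Z}\Vert^2$, so that
\[
\dot{\V}(t) + \tfrac{\sqrt{\tilde{\mu}}}{4}\V(t) + \Big(\tfrac{\tilde{\mu}^{3/2}}{4}-\tfrac{2\tilde{\beta}^2\tau^2}{\sqrt{\tilde{\mu}}}\Big)\Vert Z(t)-\bar{Z}\Vert^2 + \tfrac{\sqrt{\tilde{\mu}}}{2}\Vert\dot{Z}(t)\Vert^2 \leq 0 .
\]
The bracketed coefficient is nonnegative iff $\tilde{\mu}^2\geq8\tilde{\beta}^2\tau^2$, i.e. iff $\tilde{\rho}=\tilde{\beta}\tau/\tilde{\mu}\leq\sqrt{2}/4$, which is the standing hypothesis; discarding the two nonnegative terms and integrating the resulting differential inequality yields $\V(t)\leq\V(t_0)e^{-\frac{\sqrt{\tilde{\mu}}}{4}(t-t_0)}$. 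Finally, $\tfrac{\tilde{\mu}}{2}\Vert Z(t)-\bar{Z}\Vert^2\leq\Phi(Z(t))\leq\V(t)$ by strong convexity of $\Phi$ and nonnegativity of $\tfrac12\Vert v(t)\Vert^2$, which is exactly \eqref{eq:thm1_lag_est}.

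The hard part will be the bilinear coupling $\langle y,\kk x\rangle$: a direct energy estimate generates cross terms that are not sign-definite and are not obviously dominated by the dissipative terms. The key is that the crossed extrapolation is tailored precisely so as to reassemble the coupling as $\tfrac{1}{\sqrt{\tilde{\mu}}}\bL v(t)$, a quantity with vanishing inner product against $v(t)$; this reduces $\dot{\V}$ to the same shape as in the purely strongly convex case of \cref{thm:th1}, and from there the only remaining care is in tracking the constants so that the surviving quadratic form in $\Vert Z-\bar{Z}\Vert^2$ is nonnegative exactly in the regime $\tilde{\rho}<\sqrt{2}/4$.
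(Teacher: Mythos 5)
Your proof is correct and follows essentially the same Lyapunov argument as the paper: the same energy $\V$, the same strong-convexity lower bound (your $\langle\nabla\Phi(Z),Z-\bar Z\rangle$ coincides with the paper's $\langle\Tzb(Z(t)),Z(t)-\bar{Z}\rangle$ since $\bL$ is skew-symmetric), the same Young's inequality split, and the same threshold $\tilde\rho<\sqrt{2}/4$. The only difference is presentational: you rewrite the system upfront as $\ddot Z+2\sqrt{\tilde\mu}\,\dot Z+\nabla\Phi(Z)+\tfrac{1}{\sqrt{\tilde\mu}}\bL v+\EE_{\bar Z}(Z)=0$ with $\Phi$ the Lagrangian gap, which makes the cancellation of the bilinear coupling via $\langle v,\bL v\rangle=0$ explicit, whereas the paper carries $\Tzb$ and the extrapolated gradients through the computation and cancels the skew contributions along the way.
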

	\begin{proof}
	 To lighten notation, we drop dependence of the norm and inner product on the underlying space which is to be understood from the context. At first, we have, using \eqref{eq:sipds}
	 
	\begin{align*}
	\dot{v}(t) 
	= \sqrt{\tilde\mu}\dot{Z}(t) + \ddot{Z}(t)
	&= -\sqrt{\tilde\mu}\dot{Z}(t) - 
	\begin{pmatrix}
	\nabla_x \lag_{\mm^p_{\bar{x}},\mm^d_{\bar{y}}}\left(x(t),y(t)+\frac{1}{\sqrt{\tilde\mu}}\dot{y}(t)\right) \\ 
	- \nabla_y \lag_{\mm^p_{\bar{x}},\mm^d_{\bar{y}}}\left(x(t)+\frac{1}{\sqrt{\tilde\mu}}\dot{x}(t),y(t)\right)
	\end{pmatrix} - \EE_{\bar{Z}}(Z(t)) \\
	&= -\sqrt{\tilde\mu}\dot{Z}(t) - 
	\begin{pmatrix}
	\nabla (\F_{\mm^p_{\bar{x}}}+\g)(x(t)) \\ 
	\nabla (\rr_{\mm^d_{\bar{y}}}+\hh)(y(t))
	\end{pmatrix} - \bL \left(Z(t)+\frac{1}{\sqrt{\tilde\mu}}\dot{Z}(t)\right) - \EE_{\bar{Z}}(Z(t)) \\
	&= -\sqrt{\tilde\mu}\dot{Z}(t) - \Tzb(Z(t)) - \frac{1}{\sqrt{\tilde\mu}}\bL\dot{Z}(t) - \EE_{\bar{Z}}(Z(t)) .
	%&= \Big(-\sqrt{\tilde\mu}\dot{x}(t) -\nabla_x {\blue \lag_{\mm^p_{\bar{x}},\mm^d_{\bar{y}}}}(x(t),y(t)), -\sqrt{\tilde\mu}\dot{y}(t) {\blue + \nabla_{y}{\blue \lag_{\mm^p_{\bar{x}},\mm^d_{\bar{y}}}}(x(t),y(t))}\Big) 
	\end{align*}
    We thus get,
		\begin{align*}
		\dot{\V}(t) 
		&= \langle\left(\nabla_{x} \lag_{\mm^p_{\bar{x}},\mm^d_{\bar{y}}}(x(t),\bar{y}), -\nabla_{y} \lag_{\mm^p_{\bar{x}},\mm^d_{\bar{y}}}(\bar{x},y(t))\right),\dot{Z}(t)\rangle + \langle v(t),\dot{v}(t)\rangle \\
		&= \langle \left(\nabla (\F_{\mm^p_{\bar{x}}}+\g)(x(t)),\nabla (\rr_{\mm^d_{\bar{y}}}+\hh)(y(t))\right)+\bL\bar{Z},\dot{Z}(t)\rangle + \langle v(t),\dot{v}(t)\rangle \\
		&= \langle \Tzb(Z(t)) + \bL(\bar{Z}-Z(t)),\dot{Z}(t)\rangle \\
		&~ + \langle \sqrt{\tilde{\mu}}\left(Z(t) - \bar{Z}\right) + \dot{Z}(t),-\sqrt{\tilde\mu}\dot{Z}(t) - \Tzb(Z(t)) - \frac{1}{\sqrt{\tilde\mu}}\bL\dot{Z}(t) - \EE_{\bar{Z}}(Z(t))\rangle .
		\end{align*}
		After some simplifications and using that $\bL$ is skew-symmetric, we arrive at
		\begin{equation}\label{eq:der_pdV}
		\dot{\V}(t) + \sqrt{\tilde{\mu}}\langle \Tzb(Z(t)),Z(t)-\bar{Z}\rangle + \tilde{\mu} \langle\dot{Z}(t),Z(t)-\bar{Z}\rangle + \sqrt{\tilde{\mu}}\Vert\dot{Z}(t)\Vert^2 = - \langle v(t),\EE_{\bar{Z}}(Z(t))\rangle.
		\end{equation}

		Rather than using $\tilde{\mu}$-strong monotonicity of $\Tzb$ (\cref{lem:pd_application}\cref{it:4_lem_pd}), we will now prove an alternative bound on the first inner product in \eqref{eq:der_pdV} that will be useful for our Lyapunov analysis. We have
		\begin{align*}
		&\langle \Tzb(Z(t)),Z(t)-\bar{Z}\rangle \\
		&= \langle \nabla (\F_{\mm^p_{\bar{x}}}+\g)(x(t)),x(t)-\bar{x} \rangle + \langle \nabla (\rr_{\mm^d_{\bar{y}}}+\hh)(y(t)),y(t)-\bar{y} \rangle + \langle \bL Z(t),Z(t)-\bar{Z} \rangle \\
		&= \langle \nabla (\F_{\mm^p_{\bar{x}}}+\g)(x(t)),x(t)-\bar{x} \rangle + \langle \nabla (\rr_{\mm^d_{\bar{y}}}+\hh)(y(t)),y(t)-\bar{y} \rangle - \langle \kk^* y(t),\bar{x}\rangle + \langle \kk x(t),\bar{y}\rangle .
		\end{align*}
		Now, by \cref{assump:6}\cref{it:pd_assump_3}, we have 
		\begin{align*}
		\langle \nabla (\F_{\mm^p_{\bar{x}}}+\g)(x(t)),x(t)-\bar{x} \rangle 
		&\geq (\F_{\mm^p_{\bar{x}}}+\g)(x(t)) - (\F_{\mm^p_{\bar{x}}}+\g)(\bar{x}) + \frac{\mu_p}{2}\|x(t)-\bar{x}\|^2 \\
		\langle \nabla (\rr_{\mm^d_{\bar{y}}}+\hh)(y(t)),y(t)-\bar{y} \rangle 
		&\geq (\rr_{\mm^d_{\bar{y}}}+\hh)(y(t)) - (\rr_{\mm^d_{\bar{y}}}+\hh)(\bar{y}) + \frac{\mu_d}{2}\|y(t)-\bar{y}\|^2 .
		\end{align*}
		Summing we get that
		\begin{align*}
		&\langle \Tzb(Z(t)),Z(t)-\bar{Z}\rangle \\
		&\geq (\F_{\mm^p_{\bar{x}}}+\g)(x(t)) - (\rr_{\mm^d_{\bar{y}}}+\hh)(\bar{y}) + (\rr_{\mm^d_{\bar{y}}}+\hh)(y(t)) - (\F_{\mm^p_{\bar{x}}}+\g)(\bar{x}) - \langle \kk^* y(t),\bar{x}\rangle + \langle \kk x(t),\bar{y}\rangle + \frac{\tilde{\mu}}{2}\|Z(t)-\bar{Z}\|^2 \\
		&= \lag_{\mm^p_{\bar{x}},\mm^d_{\bar{y}}}(x(t),\bar{y}) - \lag_{\mm^p_{\bar{x}},\mm^d_{\bar{y}}}(\bar{x},y(t)) + \frac{\tilde{\mu}}{2}\|Z(t)-\bar{Z}\|^2 .
		\end{align*}
		Plugging this into \eqref{eq:der_pdV}, we get
		\begin{equation}\label{eq:thm2_V1}
		\begin{aligned}
		&\dot{\V}(t) + \sqrt{\tilde{\mu}} \left(\lag_{\mm^{p}_{\bar{x}},\mm^{d}_{\bar{y}}}(x(t),\bar{y}) - \lag_{\mm^p_{\bar{x}},\mm^d_{\bar{y}}}(\bar{x},y(t)) + \frac{\tilde{\mu}}{2}\|Z(t)-\bar{Z}\|^2 +  \sqrt{\tilde{\mu}} \langle\dot{Z}(t),Z(t)-\bar{Z}\rangle + \Vert\dot{Z}(t)\Vert^2\right) \\
		&= \dot{\V}(t) + \sqrt{\tilde{\mu}} \left(\lag_{\mm^p_{\bar{x}},\mm^d_{\bar{y}}}(x(t),\bar{y}) - \lag_{\mm^p_{\bar{x}},\mm^d_{\bar{y}}}(\bar{x},y(t)) + \frac{1}{2}\|\sqrt{\tilde{\mu}}(Z(t)-\bar{Z})+\dot{Z}(t)\|^2\right) + \frac{\tilde{\mu}}{2}\|\dot{Z}(t)\|^2 \\
		&= \dot{\V}(t) + \sqrt{\tilde{\mu}}\V(t) + \frac{\sqrt{\tilde{\mu}}}{2}\|\dot{Z}(t)\|^2
		\leq  \Vert v(t)\Vert\Vert\EE_{\bar{Z}}(Z(t))\Vert,
		\end{aligned}
		\end{equation}
		where we used Cauchy-Schwarz inequality in the last line.
		Arguing as above, using again \cref{assump:6}\cref{it:pd_assump_3}, we have
		\begin{equation}\label{eq:Vlagbnd}
		\V(t) \geq \lag_{\mm^p_{\bar{x}},\mm^d_{\bar{y}}}(x(t),\bar{y}) - \lag_{\mm^p_{\bar{x}},\mm^d_{\bar{y}}}(\bar{x},y(t)) \geq \frac{\tilde{\mu}}{2}\|Z(t)-\bar{Z}\|^2 .
		\end{equation}
		Using this in \eqref{eq:thm2_V1} yields the bound
		\begin{equation}\label{eq:thm2_V2}
			\dot{\V}(t) + \frac{\sqrt{\tilde{\mu}}}{2} \V(t) + \frac{\sqrt{\tilde{\mu}}}{2} \Vert\dot{Z}(t)\Vert^2 + \frac{\tilde{\mu}^{3/2}}{4}\Vert Z(t)-\bar{Z}\Vert^2 \leq  \Vert v(t)\Vert\Vert\EE_{\bar{Z}}(Z(t))\Vert.
		\end{equation}	
		We have, using Young's inequality, \cref{lem:pd_application}\cref{it:3_lem_pd} and the fact that $\V(t)\geq \frac{1}{2}\Vert v(t)\Vert^2$
		\[
		\begin{aligned}
		\Vert v(t)\Vert\Vert\EE_{\bar{Z}}(Z(t))\Vert \leq \frac{\sqrt{\tilde{\mu}}}{8}\Vert  v(t)\Vert^2 + \frac{2}{\sqrt{\tilde{\mu}}}\Vert\EE_{\bar{Z}}(Z(t))\Vert^2 \leq \frac{\sqrt{\tilde{\mu}}}{4}\V(t) + \frac{2\tilde{\beta}^{2}\tau^2}{\sqrt{\tilde{\mu}}}\Vert Z(t)-\bar{Z}\Vert^2. \\
		\end{aligned}
		\]
		Inserting into \eqref{eq:thm2_V2} and rearranging gives
		\[
		\dot{\V}(t)+\frac{\sqrt{\tilde{\mu}}}{4}\V(t) + \frac{\sqrt{\tilde{\mu}}}{2}\Vert\dot{Z}(t)\Vert^2 + \sqrt{\tilde{\mu}}\left(\frac{\tilde{\mu}}{4} - \frac{2\tilde{\beta}^{2}\tau^2}{\tilde{\mu}}\right)\Vert Z(t)-\bar{Z}\Vert^2\leq 0.
		\]
		Since $\frac{\tilde{\beta}\tau}{\tilde{\mu}}<\frac{\sqrt{2}}{4}$ by assumption, we have
		\[
		\dot{\V}(t)+\frac{\sqrt{\tilde{\mu}}}{4}\V(t) \leq 0,
		\]
		which gives after integration yields
		\begin{equation}\label{eq:thm2_estimate_V}
			\V(t)\leq \V(t_0) e^{-\frac{\sqrt{\tilde{\mu}}}{4}(t-t_0)},~\forall t\geq t_0.
		\end{equation}
		Plugging this into \eqref{eq:Vlagbnd}, we prove the claim.

		\end{proof}

%**********************************************************************
%***    Conclusion
%**********************************************************************
	\section{Comments, extensions and future work}\label{section:conclusion}
	In this paper, we adopted a dynamical system approach to study some stochastic optimization problems with state-dependent distributions. We investigated the existence and uniqueness of equilibrium points, well-posedness as well as convergence properties of the trajectories, for both first and second-order dynamics. We highlighted some dynamical and geometrical properties of the state-dependent distributions suggesting that the natural framework to study problems of the form \eqref{eq:inclusion_intro} is the one of \textit{metric random walk spaces}. More particularly, the notion of coarse Ricci curvature gives a new insight on the geometrical hidden structure of this kind of problems. Finally, we discussed as an application the inertial primal-dual algorithm. We present here some ongoing works, possible extensions as well as some open problems.
	\subsection*{Inertial algorithms}
	Relying on the discretization of the dynamics studied in \cref{section:hessian2ndorder} and \cref{section:application}, more specifically, \eqref{eq:sec2_explicit_st_cvx} and \eqref{eq:sipds}	, we obtain new inertial algorithms with Hessian-driven damping for stochastic optimization problems with decision-dependent distributions. These algorithms exhibit rapid convergence properties and can also be adapted to the nonsmooth case. This is being addressed in ongoing work.	
	\subsection*{Implicit Hessian damping}
We focused in \cref{section:hessian2ndorder} on the explicit Hessian damping in the smooth case. Yet, it is possible to consider implicit damping as in \cite{Attouch-Chbani-Fadili-Riahi,Attouch-Fadili-Kungertsev}
			\begin{equation}
				\tag{ISIHD$_{\mm,\gamma}$}
		\label{eq:sec2_implicit}
		\ddot{x}(t) + \gamma(t)\dot{x}(t) + \nabla G_{\mm_{\bar{x}}}\left(x(t) +\omega \dot{x}(t)\right)  + \e_{\bar{x}}(x(t))  = 0,
	\end{equation}
 The dynamics \eqref{eq:sec2_implicit} is referred to as an Inertial System with  Implicit Hessian damping, since one can observe, using Taylor expansion
	\[
	\nabla G_{\mm_{\bar{x}}}\left(x(t) +\omega \dot{x}(t)\right)\approx\nabla G_{\mm_{\bar{x}}}x(t) +\nabla^2 G_{\mm_{\bar{x}}}(x(t)) \dot{x}(t).
	\]
	As it was observed in \cite{Attouch-Fadili-Kungertsev}, higher-order moments of the perturbation $\e_{\bar{x}}$ are required to get fast convergence  guarantees in the implicit case compared to the explicit one. Since in our analysis (see \cref{thm:th1}) no integrability assumption on $\e_{\bar{x}}$ is needed, it is interesting to investigate the effect of implicit Hessian damping, both in the smooth and nonsmooth cases. 
		\subsection*{Tikhonov-regularization}
		In \cref{section:hessian2ndorder} we restricted ourselves, for sake of simplicity, the analysis to the case where the operator $F_{\mm_{\bar{x}}}$ is smooth. However, it is possible to consider second-order dynamics for general (and possibly nonpotential) operators, by considering, for $\lambda>0$ the following dynamic
					\begin{equation}
				\tag{ISEHD$_{\lambda,\gamma}$}
					\label{eq:yosida_reg_hessian}
				\ddot{x}(t) + \gamma(t)\dot{x}(t) + F^{\lambda(t)}_{\mm_{\bar{x}(t)}}(x(t)) + \omega\frac{\dd}{\dd t}\left(F^{\lambda(t)}_{\mm_{\bar{x}(t)}}(x(t))\right) + \e_{\bar{x}}(x(t))= 0.
			\end{equation}
			where $F_{\mm_{\bar{x}}}^{\lambda}$ is the so-called Yosida approximation of $F_{\mm_{\bar{x}}}$ defined by $F_{\mm_{\bar{x}}}^{\lambda}= \frac{1}{\lambda}\left(\id - J_{\lambda F_{\mm_{\bar{x}}}}\right)$ and $J_{\lambda F_{\mm_{\bar{x}}}} = (\id + JF_{\mm_{\bar{x}}})^{-1}$ is the resolvent of $ F_{\mm_{\bar{x}}}$. This approach comes with several advantages. First, the Yosida approximation is single-valued so that  there is no nonsmoothness to take care of.
%so that the monotone inclusion \eqref{eq:sec2_2} reduces to the classical differential equation \eqref{eq:yosida_reg_hessian}. 
In addition one can exploit the $\lambda-$cocoercivity of $F_{\mm_{\bar{x}}}^{\lambda}$ and the fact that $\zer F_{\mm_{\bar{x}}} = \zer F_{\mm_{\bar{x}}}^{\lambda}$. The approach was used in \cite{Attouch&Peypouquet} for $\omega=0$ and in the recent work \cite{Attouch&Laszlo} for Newton-like dynamics. We are exploring the adaptation of this techniques to stochastic monotone inclusions with state-dependent distribution in an ongoing work. 
		\subsection*{Weaker Assumptions}
We have seen that one of the crucial assumptions in the analysis is \cref{assump:2}, which concerns the Lipschitz behavior of the distribution $(\mm_x)_{x}$. A natural question that arises is what happens under a weaker assumption. For example, when $x\mapsto\mm_x$ is Hölder continuous. We are not aware of any existing results in this direction.  We plan to investigate this in future work.
\section*{Acknowledgement}
The work of H.E was supported by the ANR grant, reference: ANR-20-CE92-0037.

\begin{appendices}
		
\section{Gronwall inequalities}
					In this section we list several auxiliary results that we make use of in the paper.
%			\begin{lemma}[Opial's lemma]\label{lem:opial}
%				Let $S$ be a nonempty subset of $\cH$ and $x:[0,\infty[\to\cH$. Assume that:
%				\begin{itemize}
%					\item for every $y\in S$,$\lim_{t\to\infty}\Vert x(t) - y\Vert$ exists,
%					\item every weak cluster point of $x(t)$ belongs to $S$.
%				\end{itemize} 
%				Then $x(t)$ converges weakly  to a point of $S$ as $t\to\infty$.
%			\end{lemma}
			\begin{lemma}[Gronwall's lemma: differential form]\label{lemma:gronwall1}
				Let $u,v$ be two $C^{0}$ (resp. $C^1$) nonnegative function on $[0,T]$ and let $w$ be a continuous function on $[0,T]$. We assume that
				\begin{equation}
					\frac{1}{2}\frac{\dd}{\dd t} u^{2}(t) \leq w(t)u^{2}(t) + u(t) v(t)~~\mbox{on}~~(0,T),
				\end{equation}
				then, for any $t\in [0,T]$
				\begin{equation}
					u(t)\leq u(0) e^{K(t)} + \int_{0}^{t}v(s)e^{K(t)-K(s)}\dd s,
				\end{equation}
				where $K(t) = \int_{0}^{t} w(s) \dd s$.
			\end{lemma}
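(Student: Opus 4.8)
The plan is to reduce the hypothesis to a classical \emph{linear} differential inequality, solve it with an integrating factor, and use a small regularization to handle the points where $u$ vanishes as well as the fact that $w$ is allowed to change sign. Keep $K(t)=\int_0^t w(s)\,\dd s$ as in the statement, and for $\eps>0$ set $u_\eps:=\sqrt{u^2+\eps^2}$, so that $u_\eps\geq \eps>0$ everywhere, $u_\eps$ inherits the regularity of $u$ (the map $r\mapsto\sqrt{r+\eps^2}$ is smooth near $[0,\infty)$ and $1$-Lipschitz in $u$), and the chain rule gives $u_\eps\dot u_\eps=u\dot u$ wherever the derivatives exist. From the hypothesis $u\dot u\leq w u^2+uv$, together with the elementary bounds $wu^2=wu_\eps^2-w\eps^2\leq wu_\eps^2+|w|\eps^2$ and $uv\leq u_\eps v$ (using $v\geq 0$ and $0\leq u\leq u_\eps$), I would obtain $u_\eps\dot u_\eps\leq wu_\eps^2+u_\eps v+|w|\eps^2$; dividing by $u_\eps\geq\eps$ and bounding $|w|\eps^2/u_\eps\leq|w|\eps$ then gives
\[
\dot u_\eps(t)\leq w(t)u_\eps(t)+v(t)+|w(t)|\eps .
\]

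Next I would multiply through by the integrating factor $e^{-K(t)}$, which turns the left-hand side into an exact derivative:
\[
\frac{\dd}{\dd t}\bigl(u_\eps(t)e^{-K(t)}\bigr)=\bigl(\dot u_\eps(t)-w(t)u_\eps(t)\bigr)e^{-K(t)}\leq\bigl(v(t)+\eps|w(t)|\bigr)e^{-K(t)} .
\]
Integrating over $[0,t]$ and multiplying back by $e^{K(t)}$ yields
\[
u_\eps(t)\leq u_\eps(0)\,e^{K(t)}+\int_0^t\bigl(v(s)+\eps|w(s)|\bigr)\,e^{K(t)-K(s)}\,\dd s .
\]
Finally, letting $\eps\to 0^+$ gives $u_\eps(t)\to u(t)$ and $u_\eps(0)\to u(0)$ pointwise, while $\eps\int_0^t|w(s)|\,e^{K(t)-K(s)}\,\dd s\to 0$ because $w$ and $K$ are continuous on the compact interval $[0,T]$. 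This produces exactly the asserted bound $u(t)\leq u(0)\,e^{K(t)}+\int_0^t v(s)\,e^{K(t)-K(s)}\,\dd s$.

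The only genuine subtlety — hence the step I would treat most carefully — is the behaviour at points where $u(t)=0$: there one cannot simply divide the hypothesis by $u$, and since $w$ is not assumed nonnegative the naive comparison $wu^2\leq wu_\eps^2$ fails. The $\eps$-regularization disposes of both issues at once, at the price of the harmless correction $|w|\eps^2$ which vanishes in the limit. (In the $C^1$ case one could instead argue directly without regularizing: at any $t$ with $u(t)>0$ divide by $u$ to get $\dot u\leq wu+v$, hence $\frac{\dd}{\dd t}(u\,e^{-K})\leq v\,e^{-K}$ there; at any $t$ with $u(t)=0$ one has $\dot u(t)=0$ since $u\geq 0$ attains a minimum, so $\frac{\dd}{\dd t}(u\,e^{-K})=0\leq v\,e^{-K}$ using $v\geq 0$; thus the inequality for $u\,e^{-K}$ holds pointwise and integrates to the claim. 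The $\eps$-argument is cleaner, however, and also covers the merely absolutely continuous setting that is actually relevant for the trajectory estimates in the paper.)
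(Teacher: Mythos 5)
Your proof is correct. Note that the paper itself offers no proof of this lemma: it is stated in the appendix as a classical auxiliary result (the differential form of Gronwall's inequality, as found e.g.\ in Bahouri--Chemin--Danchin or Brezis), so there is no argument of the authors' to compare against. Your $\eps$-regularization $u_\eps=\sqrt{u^2+\eps^2}$ is the standard clean way to prove it, and you correctly identify and dispose of the two genuine subtleties: the impossibility of dividing by $u$ on its zero set, and the failure of $wu^2\leq wu_\eps^2$ when $w$ changes sign (handled by the harmless $|w|\eps^2$ correction, which you bound by $|w|\eps$ after division and send to zero at the end). Each step checks out: $u_\eps\dot u_\eps=\tfrac12\tfrac{\dd}{\dd t}u_\eps^2=\tfrac12\tfrac{\dd}{\dd t}u^2$, the comparison $uv\leq u_\eps v$ uses $v\geq0$, the integrating factor computation is exact, and the passage to the limit is uniform on $[0,T]$ by compactness. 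Your closing remark about the direct argument in the $C^1$ case (using $\dot u=0$ at interior zeros of the nonnegative function $u$) is also sound.
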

			\begin{lemma}\cite[Lemma A.5]{brezismaxmon}\label{lem:gronwall2}
				Let $v\in L^{1}(t_0,T;\R^+)$ and $u\in C^{0}(t_0,T)$ such that
				\[
				\frac{1}{2}u^{2}(t) \leq \frac{1}{2}	c^2 + \int_{t_0}^{t} u(s)v(s)\dd s,
				\]
				for some $c\geq 0$ for all $t\in [t_0,T]$. Then
				\[
				\vert u(t)\vert \leq c + \int_{t_0}^{t} v(s)\dd s.
				\]
			\end{lemma}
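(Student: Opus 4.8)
The plan is to convert the integral inequality into a first‑order differential inequality for the square root of its right‑hand side. I would set $\varphi(t) \defeq \tfrac12 c^2 + \int_{t_0}^t u(s)v(s)\,\dd s$ for $t\in[t_0,T]$. Since $u$ is continuous on the compact interval $[t_0,T]$ (hence bounded there) and $v\in L^1(t_0,T;\R^+)$, the product $uv$ lies in $L^1(t_0,T)$, so $\varphi$ is absolutely continuous with $\dot\varphi = uv$ almost everywhere. The hypothesis reads $\tfrac12 u^2 \le \varphi$, which already gives $\varphi\ge 0$ and $|u(t)|\le\sqrt{2\varphi(t)}$ for every $t$; what remains is to bound $\sqrt{2\varphi}$ from above.

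The key step is then to differentiate $\sqrt{2\varphi}$. Formally, $\frac{\dd}{\dd t}\sqrt{2\varphi(t)} = \dot\varphi(t)/\sqrt{2\varphi(t)} = u(t)v(t)/\sqrt{2\varphi(t)} \le |u(t)|\,v(t)/\sqrt{2\varphi(t)} \le v(t)$ for a.e.\ $t$, using $v\ge 0$ together with $|u|\le\sqrt{2\varphi}$; integrating from $t_0$ to $t$ would yield $\sqrt{2\varphi(t)} \le \sqrt{2\varphi(t_0)} + \int_{t_0}^t v(s)\,\dd s = c + \int_{t_0}^t v(s)\,\dd s$, and combining with $|u(t)|\le\sqrt{2\varphi(t)}$ finishes the argument.

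The one genuine subtlety — and the main obstacle — is that $\varphi$ may vanish, so the division by $\sqrt{2\varphi}$ is not legitimate as written, and one also has to justify that $t\mapsto\sqrt{2\varphi(t)}$ is itself absolutely continuous before invoking the fundamental theorem of calculus. I would deal with both by a standard regularization: fix $\varepsilon>0$, replace $\varphi$ by $\varphi_\varepsilon \defeq \varphi + \tfrac12\varepsilon^2 \ge \tfrac12\varepsilon^2 > 0$ (so $\dot\varphi_\varepsilon = \dot\varphi$ a.e.), and note that $x\mapsto\sqrt{2x}$ is Lipschitz on $[\tfrac12\varepsilon^2,\infty)$, so $\sqrt{2\varphi_\varepsilon}$ is absolutely continuous with a.e.\ derivative $\dot\varphi_\varepsilon/\sqrt{2\varphi_\varepsilon} \le \sqrt{2\varphi}\,v/\sqrt{2\varphi_\varepsilon} \le v$. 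Integrating gives $\sqrt{2\varphi_\varepsilon(t)} \le \sqrt{c^2+\varepsilon^2} + \int_{t_0}^t v(s)\,\dd s$, whence $|u(t)| \le \sqrt{2\varphi(t)} \le \sqrt{2\varphi_\varepsilon(t)} \le \sqrt{c^2+\varepsilon^2} + \int_{t_0}^t v(s)\,\dd s$, and letting $\varepsilon\to 0^+$ recovers the claimed bound for all $t\in[t_0,T]$. Everything else — absolute continuity of $\varphi$, the chain rule for a Lipschitz map composed with an absolutely continuous one, and the fundamental theorem of calculus for absolutely continuous functions — is classical and requires no further comment.
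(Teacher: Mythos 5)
The paper states this lemma without proof, simply citing \cite[Lemma~A.5]{brezismaxmon}, and your argument is precisely the classical one from that reference: introduce $\varphi$, regularize it to $\varphi_\varepsilon\geq\tfrac12\varepsilon^2$ so that $\sqrt{2\varphi_\varepsilon}$ is absolutely continuous, bound its a.e.\ derivative by $v$ using $|u|\leq\sqrt{2\varphi}\leq\sqrt{2\varphi_\varepsilon}$ and $v\geq 0$, integrate, and let $\varepsilon\to 0^+$. The proof is correct and complete; in particular you correctly identified and resolved the only delicate points (possible vanishing of $\varphi$ and the justification of the chain rule/fundamental theorem of calculus for $\sqrt{2\varphi_\varepsilon}$).
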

			
			\section{Banach Fixed point theorem $\&$ Picard iterative method}\label{appendix:fixed-point}
			\begin{theorem}[see, \eg \cite{ABG,brezisFA}]\label{thm:fixedpoint}
				Let $(\X,\dd)$ be a complete metric space and $S:\X\to\X$ be a strict contraction, \ie there exists a constant $\varrho<1$ such that
				\[
				\dd(S(x),S(y))\leq \varrho \dd(x,y), \forall x,y\in\X.
				\]
				Then, there exists a unique $\bar{x}\in\X$ such that $S(\bar{x}) = \bar{x}$. Moreover, for any $x_0\in\X$, the sequence starting from $x_0$ with $x_{n+1} = S(x_n)$ for all $n\in\nats$ converges to $\bar{x}$ as $n$ goes to $\infty$.
			\end{theorem}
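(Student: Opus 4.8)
The plan is to prove the Banach fixed-point theorem by the classical Picard iteration argument, establishing in turn that the iterates form a Cauchy sequence, that their limit is a fixed point, and that the fixed point is unique; the ``moreover'' part will then be a byproduct.

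First I would fix an arbitrary $x_0 \in \X$ and set $x_{n+1} = S(x_n)$ for all $n \in \nats$. Applying the contraction inequality repeatedly gives $\dd(x_{n+1},x_n) \leq \varrho\, \dd(x_n,x_{n-1}) \leq \cdots \leq \varrho^n \dd(x_1,x_0)$. Then, for any $m > n$, the triangle inequality together with the summation of a geometric series yields
\[
\dd(x_m,x_n) \leq \sum_{k=n}^{m-1} \dd(x_{k+1},x_k) \leq \dd(x_1,x_0) \sum_{k=n}^{m-1} \varrho^k \leq \frac{\varrho^n}{1-\varrho}\, \dd(x_1,x_0),
\]
which tends to $0$ as $n \to \infty$ because $\varrho < 1$. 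Hence $(x_n)_n$ is a Cauchy sequence, and by completeness of $(\X,\dd)$ it converges to some $\bar{x} \in \X$.

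Next I would check that $\bar{x}$ is a fixed point. Since $S$ is $\varrho$-Lipschitz it is continuous, so passing to the limit in $x_{n+1} = S(x_n)$ gives $\bar{x} = S(\bar{x})$; equivalently, one can bound $\dd(\bar{x},S(\bar{x})) \leq \dd(\bar{x},x_{n+1}) + \dd(S(x_n),S(\bar{x})) \leq \dd(\bar{x},x_{n+1}) + \varrho\, \dd(x_n,\bar{x}) \to 0$. For uniqueness, if $S(\bar{x}) = \bar{x}$ and $S(\bar{y}) = \bar{y}$, then $\dd(\bar{x},\bar{y}) = \dd(S(\bar{x}),S(\bar{y})) \leq \varrho\, \dd(\bar{x},\bar{y})$, so $(1-\varrho)\dd(\bar{x},\bar{y}) \leq 0$ and therefore $\bar{x} = \bar{y}$. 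Finally, since the Picard iterates starting from \emph{any} $x_0$ converge to a fixed point and that fixed point is unique, the last assertion of the statement follows immediately.

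There is no genuine obstacle here, as this is a textbook result; the only step that requires a small amount of care is the telescoping estimate giving the Cauchy property, and everything else is a one-line consequence of the contraction inequality and completeness of the space.
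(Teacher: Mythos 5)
Your proof is correct and complete: the telescoping geometric estimate gives the Cauchy property, completeness yields the limit, continuity of $S$ identifies it as a fixed point, and the contraction inequality forces uniqueness. The paper itself states this result without proof, deferring to standard references, and your argument is exactly the classical Picard-iteration proof found there, so there is nothing to reconcile.
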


		\end{appendices}

%**********************************************************************
%***    BIBLIOGRAPHY
%**********************************************************************
\bibliographystyle{abbrv}
\bibliography{biblio}

\end{document}